\documentclass{article}

\usepackage[english]{babel}
\usepackage{amssymb}
\usepackage{amsfonts}
\usepackage{amsmath}
\usepackage{amsthm}
\usepackage{graphicx}
\usepackage{caption}
\usepackage{float}
\usepackage{verbatim}

\newtheorem{theorem}{Theorem}[section]
\newtheorem{corollary}[theorem]{Corollary}
\newtheorem{definition}[theorem]{Definition}
\newtheorem{notation}[theorem]{Notation}
\newtheorem{proposition}[theorem]{Proposition}
\newtheorem{remark}[theorem]{Remark}

\numberwithin{equation}{section}

\begin{document} 

\title{On some concepts of convergence and their connections}

\author{Tatiana Danielsson, 
        Liselott Flodén,
        Anders Holmbom,\\
        Pernilla Johnsen,
        Marianne Olsson Lindberg,
        Jens Persson}

{\let\thefootnote\relax\footnotetext{We acknowledge with gratitude the valuable contributions by Professor Anders Holmbom who sadly passed away before the completion of this paper.}}

\date{}
\maketitle

\begin{abstract} 
    The notions of classical two-scale convergence and the periodic unfolding method, which both rely on an additional scale for the representation of periodic micro-oscillations, are in this paper traced back to the wider concept of general two-scale convergence. Moreover, a comparison is made between very weak two-scale convergence and periodic unfolding with respect to sequences bounded in Sobolev spaces, amplified by the scale of the micro-oscillations. It turns out that, for amplified sequences, these approaches are not equivalent in an immediate sense. The paper both serves as a literature overview, with examples and illustrations of the concepts of convergence, and contains new results.
\end{abstract}

\noindent\textit{Keywords: }two-scale convergence; very weak two-scale convergence; periodic unfolding method; general two-scale convergence; homogenization theory

\vspace{1.2mm}
\noindent\textit{MSC 2020: } 35B27

\section{Introduction}

The primary objective of this paper is to offer further advances regarding how some concepts of convergence utilized in homogenization theory relate to one another. We make a comparison between classical two-scale convergence and the periodic unfolding method through the wider concept of general two-scale convergence. A result concerning the relation between very weak two-scale convergence and its counterpart in unfolding is also provided. 

It is a well established fact that classical two-scale convergence and
periodic unfolding are equivalent; see, e.g., Proposition~2.14 in \cite{CiDaGr1}. In this paper, we will show a connection between general two-scale
convergence and the adjoint operator of the unfolding operator. More
precisely, we will show that for sequences bounded in $L^{2}(\Omega )$
unfolding is a special case of general two-scale convergence providing us
with a new link between two-scale convergence and unfolding via general
two-scale convergence. 

Sequences of the type $\left\{ \varepsilon_{h}^{-1} u_{\varepsilon _{h}}\right\} $, arising in, e.g., homogenization of parabolic problems with rapid temporal oscillations, can, while being beyond the scope of classical two-scale convergence, be studied in the sense of very weak two-scale convergence. This notion uses a smaller class of test functions than two-scale convergence to filter away the global tendency. Within the framework of periodic unfolding, the effect of the slow global change of such sequences is handled by subtraction with a so-called local average operator. As we will see, even though showing similarities, 
unfolding and very weak two-scale convergence are not interchangeable in the same
manner as unfolding and two-scale convergence are.

The paper is organized as follows: In Sections~\ref{Two-scale}, \ref{Unfolding}
and \ref{Operator governed} we recall the concepts of two-scale convergence along
with the notion of very weak two-scale convergence, unfolding and general two-scale
convergence, respectively. In Section~\ref{Unification} the relations between the different
types of convergences are discussed and the new results mentioned
above are presented. 

\begin{remark}
In this paper we have restricted our discussion to the $L^{2}(\Omega )$-case
but the fundamental theory concerning two-scale convergence, general
two-scale convergence and unfolding has also been developed for $%
L^{p}(\Omega )$, $1<p<\infty $; see, e.g., \cite{LNW}, \cite{HOL1} and \cite%
{CiDaGr2}, respectively. Two-scale convergence, very weak two-scale
convergence and unfolding have been subject to a number of generalizations
that have further broadened the applicability to include, e.g., problems
with more than two scales, problems defined on perforated domains and
evolution problems; see, e.g., \cite{AlBr}, \cite{All1}, \cite{HOL}, \cite{FHOLP33}, \cite{PerPhD}, \cite{FHOLParbitrary}, \cite{Lob}, \cite{CiDaGr2}, 
\cite{CiDoZa}, \cite{CiDaDoGr}, \cite{AAB}, \cite{AAGT} and \cite{CiDaGr4}.
Beyond the scope of this paper we also find, e.g., the notion of $\Sigma$-convergence introduced by Nguetseng; see, e.g., \cite{Ng2}, \cite{NgWo1}
and \cite{NgSv}. Well known types of homogenization problems such as
periodic and almost periodic ones can be treated by techniques that appear
as special cases of $\Sigma $-convergence.
\end{remark}

\begin{notation}
We let $\Omega \subset \mathbb{R}^{N}$ be an open bounded set with smooth boundary, $Y$ be the unit cube $\left( 0,1\right) ^{N}$ and $\varepsilon _{h}$ be a scale function, i.e., a positive function such that $\varepsilon _{h}\rightarrow 0$ as $h\rightarrow \infty $
and let $\varepsilon $ be the size of a repeated cell in $\Omega $, i.e.,
there is no sequence implied. Furthermore, we let $F_{\sharp }(Y)$ be the
space of all functions in $F_{\operatorname{loc}}(\mathbb{R}^{N})$ which are the periodic repetition of some function in the function
space $F(Y)$ and by $F(Y)/\mathbb{R}$ we mean functions $u\in F(Y)$ with integral mean zero.
\end{notation}

\section{Two-scale convergence\label{Two-scale}}

We give some, for our aim, essential properties of two related concepts: Two-scale convergence (here called
classical two-scale convergence or simply two-scale convergence when there
is no ambiguity) and very weak two-scale convergence.

\subsection{Classical two-scale convergence}

The method of two-scale convergence was introduced by Nguetseng in 1989 in 
\cite{Ng1} as a tool for homogenization of partial differential equations and further developed by Allaire in \cite{All1}. Other works that also studied the concept are, e.g., \cite%
{Zhi}, \cite{LNW} and \cite{FHOLP2} to mention a few. Two-scale convergence relies on the choice of test functions in the sense that the effect of rapid oscillations is absorbed by a second scale. 
We give the formal definition.

\begin{definition}
A sequence $\left\{ u_{\varepsilon _{h}}\right\} $ in $L^{2}(\Omega )$ is
said to two-scale converge to\newline
$u_{0}\in L^{2}(\Omega \times Y)$ if%
\begin{equation*}
\lim_{h\rightarrow \infty }\int_{\Omega }u_{\varepsilon _{h}}\left( x\right)
v\left( x,\frac{x}{\varepsilon _{h}}\right) dx=\int_{\Omega
}\int_{Y}u_{0}\left( x,y\right) v\left( x,y\right) dydx
\end{equation*}
for all $v\in L^{2}(\Omega ;C_{\sharp }(Y))$. This is denoted%
\begin{equation*}
u_{\varepsilon _{h}}\left( x\right) \overset{2}{\rightharpoonup }u_{0}\left(
x,y\right) \text{.}
\end{equation*}
\end{definition}

We have the following compactness result.

\begin{theorem}
\label{Th twoskalecompact}Let $\left\{ u_{\varepsilon _{h}}\right\} $ be a
bounded sequence in $L^{2}(\Omega )$. Then, up to a subsequence,%
\begin{equation*}
u_{\varepsilon _{h}}\left( x\right) \overset{2}{\rightharpoonup }u_{0}\left(
x,y\right)
\end{equation*}%
for some $u_{0}\in L^{2}(\Omega \times Y)$.
\end{theorem}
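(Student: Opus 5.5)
The plan is the classical Nguetseng--Allaire compactness argument: view $u_{\varepsilon_h}$ as a family of linear functionals on a separable Banach space of admissible test functions, extract a convergent subsequence by a diagonal argument, and then use a uniform $L^{2}$ estimate to represent the limit by some $u_0\in L^{2}(\Omega\times Y)$.

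\textbf{Step 1 (the key estimate).} Let $X=L^{2}(\Omega;C_{\sharp}(Y))$ with norm
\begin{equation*}
\|v\|_{X}=\Bigl(\int_{\Omega}\sup_{y\in Y}|v(x,y)|^{2}\,dx\Bigr)^{1/2}.
\end{equation*}
This space is separable, since $C_{\sharp}(Y)$ is separable. For $v\in X$ the function $x\mapsto v(x,x/\varepsilon_h)$ is measurable. It satisfies the pointwise bound $|v(x,x/\varepsilon_h)|\le \sup_{y}|v(x,y)|$, so
\begin{equation*}
\|v(\cdot,\cdot/\varepsilon_h)\|_{L^{2}(\Omega)}\le \|v\|_{X}.
\end{equation*}
Define
\begin{equation*}
\mu_h(v)=\int_{\Omega}u_{\varepsilon_h}(x)\,v\bigl(x,x/\varepsilon_h\bigr)\,dx .
\end{equation*}
Then $|\mu_h(v)|\le C\|v\|_{X}$, where $C=\sup_h\|u_{\varepsilon_h}\|_{L^{2}(\Omega)}$. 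So $\{\mu_h\}$ is bounded in $X'$.

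\textbf{Step 2 (extracting a limit).} Take a countable dense subset of $X$ and use a diagonal argument, or equivalently Banach--Alaoglu in the dual of a separable space. This gives a subsequence along which $\mu_h(v)\to\mu(v)$ for all $v\in X$, for some $\mu\in X'$ with $|\mu(v)|\le C\|v\|_{X}$.

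\textbf{Step 3 (representing the limit).} This is the main point. The bound from Step 2 only controls $\mu$ by the $X$-norm, whereas we need control by the $L^{2}(\Omega\times Y)$-norm. I would use the mean-value property: for $v\in X$, $|v(x,x/\varepsilon_h)|^{2}$ is itself an admissible function in $L^{1}(\Omega;C_{\sharp}(Y))$. First check it for $v(x,y)=\phi(x)\psi(y)$ with $\phi$ continuous, using the weak-$*$ convergence of $\psi(\cdot/\varepsilon)$ to its mean value $\int_Y\psi$ (the Riemann--Lebesgue lemma for periodic functions). Then pass to general $v$ by density, using the estimate of Step 1. This yields
\begin{equation*}
\lim_{h\to\infty}\|v(\cdot,\cdot/\varepsilon_h)\|_{L^{2}(\Omega)}=\|v\|_{L^{2}(\Omega\times Y)}.
\end{equation*}
Combining this with the Cauchy--Schwarz inequality gives
\begin{equation*}
|\mu(v)|\le C\|v\|_{L^{2}(\Omega\times Y)}.
\end{equation*}

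Since $X$ is dense in $L^{2}(\Omega\times Y)$, $\mu$ extends to a bounded functional on $L^{2}(\Omega\times Y)$. The Riesz representation theorem then gives $u_0\in L^{2}(\Omega\times Y)$ with
\begin{equation*}
\mu(v)=\int_{\Omega}\int_{Y}u_0\,v\,dy\,dx,
\end{equation*}
which is exactly the asserted two-scale convergence along the subsequence.

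The delicate parts are the measurability of $v(x,x/\varepsilon_h)$ for $v\in L^{2}(\Omega;C_{\sharp}(Y))$ and the mean-value limit in Step 3. For both I would first treat finite sums $\sum\phi_i(x)\psi_i(y)$, which are dense in $X$, and then extend to all of $X$ using the uniform bound of Step 1.
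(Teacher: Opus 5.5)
Your proposal is correct and is essentially the argument the paper relies on: the paper only cites Theorem~7 in \cite{LNW}, whose proof is this same Nguetseng--Allaire scheme built on the two test-function properties (\ref{BoundL2(omega,Cper(Y))}) and (\ref{Testf L2}) that you establish in Steps~1 and~3. Equivalently, your argument is the proof of Theorem~\ref{th14} specialized to $\tau_h(v)(x)=v(x,x/\varepsilon_h)$ on $X=L^2(\Omega;C_\sharp(Y))$, with constants $C=D=1$.
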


\begin{proof}
See, e.g., Theorem 7 in \cite{LNW}.
\end{proof}

The theorem below concerns the uniqueness of the two-scale limit.
\begin{theorem}
    The two-scale limit is unique.
\end{theorem}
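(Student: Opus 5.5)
Suppose a sequence $\{u_{\varepsilon_h}\}$ in $L^2(\Omega)$ two-scale converges both to $u_0$ and to $w_0$ in $L^2(\Omega\times Y)$. The plan is to show that $u_0-w_0$ annihilates a dense subspace of $L^2(\Omega\times Y)$ and is therefore zero.

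First, for every admissible test function $v\in L^2(\Omega;C_\sharp(Y))$ the left-hand side $\int_\Omega u_{\varepsilon_h}(x)v(x,x/\varepsilon_h)\,dx$ is a single sequence of real numbers. Its limit is therefore unique, and subtracting the two defining identities gives
\begin{equation*}
\int_\Omega\int_Y \bigl(u_0(x,y)-w_0(x,y)\bigr)v(x,y)\,dy\,dx=0
\end{equation*}
for all such $v$. Here we regard $v$ restricted to $\Omega\times Y$, which is legitimate since the right-hand side of the definition only uses $v$ on $\Omega\times Y$.

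Second, I would observe that the restrictions to $\Omega\times Y$ of functions in $L^2(\Omega;C_\sharp(Y))$ are dense in $L^2(\Omega\times Y)$. Indeed, this class contains all finite sums $\sum_i \varphi_i(x)\psi_i(y)$ with $\varphi_i\in C_c^\infty(\Omega)$ and $\psi_i$ the periodic extension of a function in $C_c^\infty(Y)$. Such tensor products span a dense subspace of $L^2(\Omega\times Y)$, either by density of $C_c^\infty(\Omega\times Y)$ together with Stone--Weierstrass, or because $L^2(\Omega)\otimes L^2(Y)$ is dense in $L^2(\Omega\times Y)$.

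Finally, since $u_0-w_0\in L^2(\Omega\times Y)$, the functional $v\mapsto\int\int(u_0-w_0)v$ is continuous on $L^2(\Omega\times Y)$. It vanishes on a dense set, hence identically. Choosing $v$ to approximate $u_0-w_0$ itself then gives $\|u_0-w_0\|_{L^2(\Omega\times Y)}^2=0$, so $u_0=w_0$ almost everywhere.

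The only point requiring care is the density step: one must check that the test class, whose members are merely continuous and periodic in $y$, still yields a dense set after restriction to $Y$. Taking $\psi$ compactly supported in the open cube avoids any compatibility issue at the boundary of $Y$.
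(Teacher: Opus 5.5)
Your argument is correct and is essentially the standard density argument that the paper defers to; the paper itself only cites the discussion after Definition~1 in \cite{LNW}. The difference of two limits annihilates every test function in $L^2(\Omega;C_\sharp(Y))$, this class is dense in $L^2(\Omega\times Y)$, and your choice of $\psi$ compactly supported in the open cube correctly ensures that the periodic extensions are continuous.
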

\begin{proof}
    See, e.g., the discussion following Definition~1 in \cite{LNW}.
\end{proof}

Two-scale convergence is related to weak convergence but an essential distinction is that the two-scale limit is able to capture information from microscopic oscillations, see Figures~\ref{Figur: two-scale} and~\ref{Figur: two-scale2}. Note that the sequences have the same weak $L^2(\Omega)$-limits but different two-scale limits.
\begin{figure}[H]
    \centering
    \includegraphics[width=0.32\linewidth]{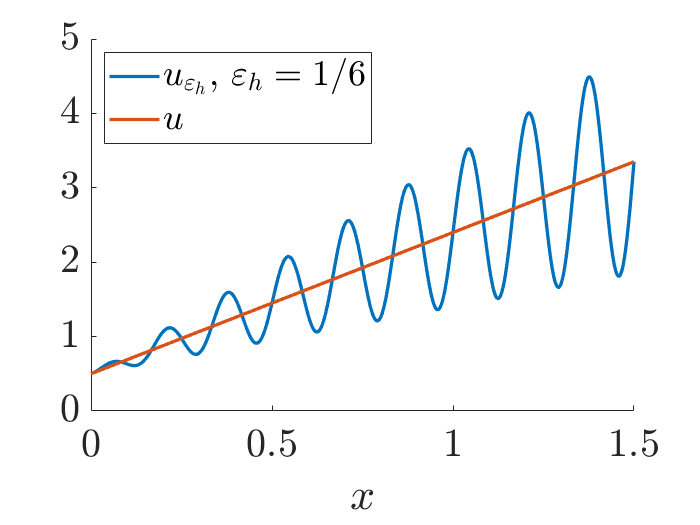}
    \includegraphics[width=0.32\linewidth]{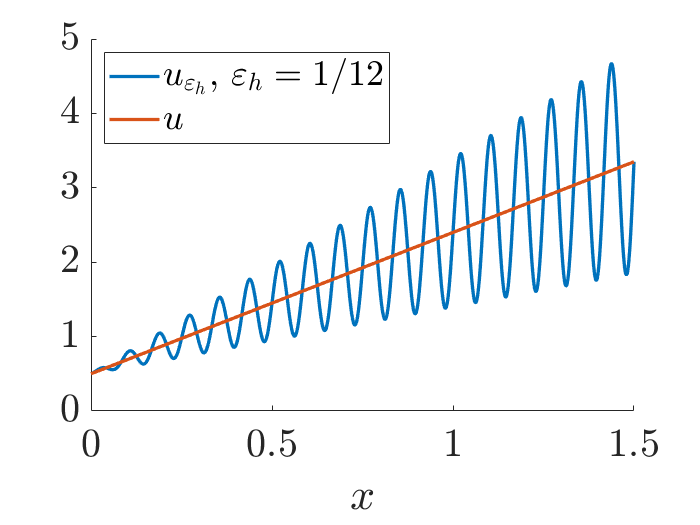}
    \includegraphics[width=0.32\linewidth]{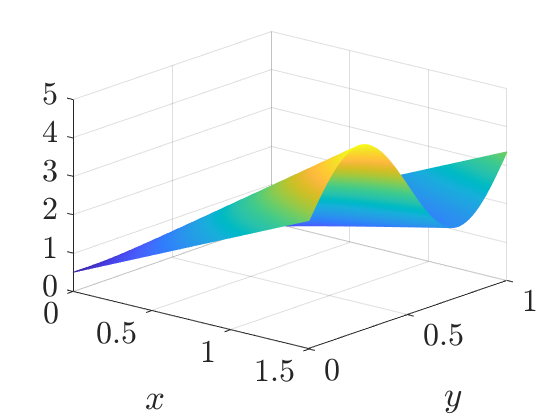}
    \caption{Elements from a sequence $\left\{ u_{\varepsilon _{h}}\right\} $ for two fixed
    ${\varepsilon}_h$  together with the weak 
    $L^2(\Omega)$-limit $u$ (the two leftmost pictures) and the two-scale limit (to the right).}\label{Figur: two-scale}
\end{figure}

\begin{figure}[H]
    \centering
    \includegraphics[width=0.32\linewidth]{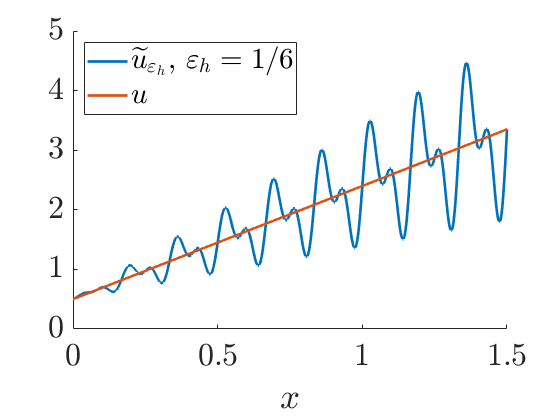}
    \includegraphics[width=0.32\linewidth]{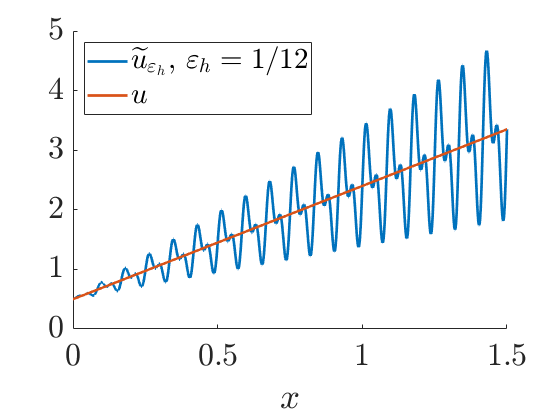}
    \includegraphics[width=0.32\linewidth]{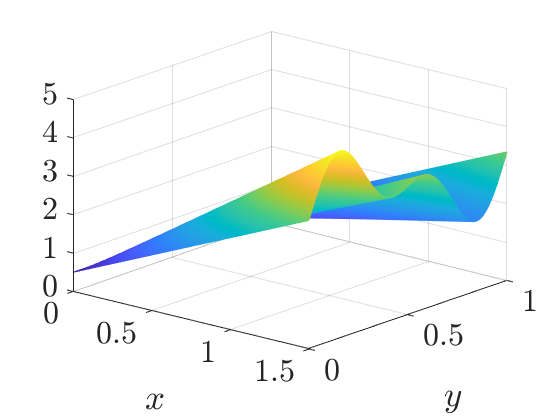}
    \caption{Elements from a sequence $\left\{ \widetilde{u}_{\varepsilon _{h}}\right\} $ for two fixed
    ${\varepsilon}_h$  together with the weak 
    $L^2(\Omega)$-limit $u$ (the two leftmost pictures) and the two-scale limit (to the right).}\label{Figur: two-scale2}
\end{figure}

The next two theorems relate the two-scale limit to the weak and strong $L^{2}(\Omega )$-limits, respectively, showing that two-scale convergence is weaker than strong convergence but stronger than weak convergence.

\begin{theorem}
\label{Th svagtvaskale}Let $\left\{ u_{\varepsilon _{h}}\right\} $ be a
sequence in $L^{2}(\Omega )$ which two-scale converges to $u_{0}\in
L^{2}(\Omega \times Y)$. Then%
\begin{equation}
u_{\varepsilon _{h}}\left( x\right) \rightharpoonup u\left( x\right)
=\int_{Y}u_{0}\left( x,y\right) dy\text{ in }L^{2}(\Omega )
\label{svag tvaskale}
\end{equation}%
and $\left\{ u_{\varepsilon _{h}}\right\} $ is bounded in $L^{2}(\Omega )$.
\end{theorem}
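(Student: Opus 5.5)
The plan is to take $x$-dependent test functions only, and then use the uniform boundedness principle for the boundedness part. If I choose $v(x,y)=\varphi(x)$ with $\varphi\in L^{2}(\Omega)$, then $v$ is constant in $y$, so it trivially belongs to $L^{2}(\Omega;C_{\sharp}(Y))$. The definition of two-scale convergence then gives
\begin{equation*}
\lim_{h\rightarrow\infty}\int_{\Omega}u_{\varepsilon_{h}}(x)\varphi(x)\,dx=\int_{\Omega}\int_{Y}u_{0}(x,y)\varphi(x)\,dy\,dx=\int_{\Omega}\Big(\int_{Y}u_{0}(x,y)\,dy\Big)\varphi(x)\,dx .
\end{equation*}
The last equality is Fubini, which is valid because $u_{0}\in L^{2}(\Omega\times Y)$ and $|Y|=1$. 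Cauchy--Schwarz in $y$ shows that $u(x)=\int_{Y}u_{0}(x,y)\,dy$ lies in $L^{2}(\Omega)$ with $\|u\|_{L^{2}(\Omega)}\leq\|u_{0}\|_{L^{2}(\Omega\times Y)}$. So the limit functional is the pairing with an element of $L^{2}(\Omega)$, and this is exactly the weak convergence (\ref{svag tvaskale}).

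Boundedness then comes from the Banach--Steinhaus theorem. Regard each $u_{\varepsilon_{h}}$ as the bounded linear functional $\varphi\mapsto\int_{\Omega}u_{\varepsilon_{h}}\varphi\,dx$ on the Hilbert space $L^{2}(\Omega)$. By the first step, for every fixed $\varphi$ the scalar sequence $\int_{\Omega}u_{\varepsilon_{h}}\varphi\,dx$ converges, so it is bounded in $h$. The uniform boundedness principle therefore gives $\sup_{h}\|u_{\varepsilon_{h}}\|_{L^{2}(\Omega)}<\infty$, using the Riesz identification of the operator norm with the $L^{2}$ norm.

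I do not expect a real obstacle here. The only point that needs care is the order of the argument: boundedness is a conclusion, not a hypothesis, so I cannot use compactness (Theorem~\ref{Th twoskalecompact}) or any estimate that presupposes a bound. That is why boundedness must come after the weak convergence, via Banach--Steinhaus, and not before it. The remaining checks are routine: that $\varphi(x)$ is an admissible test function, and that Fubini applies.
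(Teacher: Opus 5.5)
Your proof is correct. The paper gives no argument of its own and cites Theorem~6 of \cite{LNW}; your route is the standard proof of that result. You test with $y$-independent $v(x,y)=\varphi(x)$, $\varphi\in L^{2}(\Omega)$, which is admissible because such $v$ lie in $L^{2}(\Omega;C_{\sharp}(Y))$, and this identifies the weak limit $\int_{Y}u_{0}\,dy$. Banach--Steinhaus then gives the $L^{2}(\Omega)$ bound.
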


\begin{proof}
See, e.g., Theorem 6 in \cite{LNW}.
\end{proof}

\begin{theorem}
\label{Thtvaskalastark}
    Let $\left\{ u_{\varepsilon _{h}}\right\} $ be a sequence in $L^{2}(\Omega )$ which converges strongly to $u$ in $L^2(\Omega)$. Then
    \begin{equation*}
        u_{\varepsilon_h} \left(x\right) \overset{2}{\rightharpoonup} u\left(x\right)\text{.}
    \end{equation*}
\end{theorem}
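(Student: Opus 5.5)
We split the integral against an admissible test function into a strong-convergence error and a term involving only the fixed limit $u$. Let $v\in L^{2}(\Omega;C_{\sharp}(Y))$ be arbitrary. We write
\begin{equation*}
\int_{\Omega}u_{\varepsilon_h}(x)v\Big(x,\frac{x}{\varepsilon_h}\Big)dx=\int_{\Omega}\big(u_{\varepsilon_h}(x)-u(x)\big)v\Big(x,\frac{x}{\varepsilon_h}\Big)dx+\int_{\Omega}u(x)v\Big(x,\frac{x}{\varepsilon_h}\Big)dx .
\end{equation*}
We aim to show that the first term tends to zero and the second tends to $\int_{\Omega}\int_{Y}u(x)v(x,y)\,dy\,dx$. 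This is exactly the statement that $u_{\varepsilon_h}$ two-scale converges to the function $u_0(x,y)=u(x)$, constant in $y$.

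For the first term we use Cauchy--Schwarz. We need the uniform bound
\begin{equation*}
\Big\|v\Big(\cdot,\frac{\cdot}{\varepsilon_h}\Big)\Big\|_{L^{2}(\Omega)}\le \Big(\int_{\Omega}\sup_{y\in Y}|v(x,y)|^{2}dx\Big)^{1/2}=\|v\|_{L^{2}(\Omega;C_{\sharp}(Y))}.
\end{equation*}
It follows from the pointwise bound $|v(x,x/\varepsilon_h)|\le \sup_y|v(x,y)|$ and periodicity. Measurability of $x\mapsto v(x,x/\varepsilon_h)$ is standard for Carath\'eodory-type functions and I take it as known. The first term is then bounded by $\|u_{\varepsilon_h}-u\|_{L^2(\Omega)}\|v\|_{L^{2}(\Omega;C_{\sharp}(Y))}$, which tends to $0$.

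For the second term, note that $w(x,y)=u(x)v(x,y)$ belongs to $L^{1}(\Omega;C_{\sharp}(Y))$. What is needed is the classical fact that $w(x,x/\varepsilon_h)\rightharpoonup\int_Y w(x,y)\,dy$ weakly in $L^1(\Omega)$, tested against the constant $1$. Equivalently, the constant sequence $u$ two-scale converges to itself. I would prove this by density.
\begin{itemize}
\item For $w(x,y)=\varphi(x)\psi(y)$ with $\varphi\in C_c(\Omega)$ and $\psi\in C_\sharp(Y)$, the claim follows from the Riemann--Lebesgue-type lemma $\psi(\cdot/\varepsilon_h)\rightharpoonup\int_Y\psi\,dy$ weak-$*$ in $L^\infty$.
\item Linear combinations of such products are dense in $L^{1}(\Omega;C_{\sharp}(Y))$.
\item The map $w\mapsto\int_\Omega w(x,x/\varepsilon_h)\,dx$ is bounded by $\|w\|_{L^{1}(\Omega;C_{\sharp}(Y))}$ uniformly in $h$, so an $\varepsilon/3$ argument extends the convergence to all $w$.
\end{itemize}
Alternatively, one can invoke the compactness Theorem~\ref{Th twoskalecompact} together with the uniqueness of the two-scale limit, but the direct density argument seems cleaner.

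The main obstacle is the second term: the oscillating-test-function lemma for a general $L^{1}(\Omega;C_\sharp(Y))$ integrand, in particular the density step and the measurability of the traces $v(x,x/\varepsilon_h)$. The first term is routine once the uniform $L^2$ bound on the traces is in hand.
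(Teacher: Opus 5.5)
Your argument is correct. The paper gives no proof of its own here; it only cites Theorem~5 in \cite{LNW}.

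Your splitting into $\int_\Omega (u_{\varepsilon_h}-u)\,v(x,x/\varepsilon_h)\,dx$ and $\int_\Omega u\,v(x,x/\varepsilon_h)\,dx$ is the standard strong--weak pairing argument for this result. The first term is controlled by the uniform bound (\ref{BoundL2(omega,Cper(Y))}). The second term is exactly the weak convergence (\ref{svag testfunk}) of $v(x,x/\varepsilon_h)$ to $\int_Y v(x,y)\,dy$ in $L^2(\Omega)$, tested against the fixed function $u\in L^2(\Omega)$. The paper records (\ref{svag testfunk}), citing \cite{Flo}, immediately after this theorem, so within its framework the proof takes two lines. Your density argument in $L^1(\Omega;C_\sharp(Y))$ is also correct: products $\varphi(x)\psi(y)$, the Riemann--Lebesgue lemma, and the uniform bound by $\|w\|_{L^1(\Omega;C_\sharp(Y))}$ on both the $h$-dependent and the limit functionals. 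It simply reproves that oscillating-test-function lemma in its $L^1(\Omega;C_\sharp(Y))$ formulation, which makes the proof self-contained at the cost of length.

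One caveat concerns your aside. Compactness (Theorem~\ref{Th twoskalecompact}) plus uniqueness only produces some limit $u_0$ along subsequences. Those two facts alone do not show $u_0(x,y)=u(x)$, so that route is not a genuine shortcut. It still needs an identification step, for example the same lemma, or $\int_Y u_0\,dy=u$ from Theorem~\ref{Th svagtvaskale} combined with lower semicontinuity of the norm and Jensen's inequality.
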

\begin{proof}
    See, e.g., Theorem~5 in \cite{LNW}.
\end{proof}

We end this subsection by mentioning some characteristics of the test functions. We have the following theorem. 

\begin{theorem}
Let $v\in L^{2}(\Omega ;C_{\sharp }(Y))$. Then%
\begin{equation}
v\left( x,\frac{x}{\varepsilon _{h}}\right) \rightharpoonup \int_{Y}v\left( x,y\right) dy\text{ in }L^{2}(\Omega )
\label{svag testfunk}
\end{equation}%
and%
\begin{equation*}
v\left( x,\frac{x}{\varepsilon _{h}}\right) \overset{2}{\rightharpoonup }%
v\left( x,y\right) \text{.}
\end{equation*}
\end{theorem}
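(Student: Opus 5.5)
The plan is to prove the second assertion first and then read off the first from Theorem~\ref{Th svagtvaskale}. The natural tool is a density argument: reduce to test functions of tensor-product form and use the classical fact that periodic functions of $x/\varepsilon_h$ converge weakly to their mean.

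\textbf{Step 1: the oscillating test function is well defined and bounded.} For $v\in L^{2}(\Omega;C_{\sharp}(Y))$ the function $x\mapsto v(x,x/\varepsilon_h)$ is measurable (Carathéodory-type argument, or approximation by simple functions in $x$). It satisfies
\begin{equation*}
\|v(\cdot,\cdot/\varepsilon_h)\|_{L^2(\Omega)}\le \|v\|_{L^{2}(\Omega;C_{\sharp}(Y))},
\end{equation*}
since $|v(x,x/\varepsilon_h)|\le \sup_y|v(x,y)|$.

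\textbf{Step 2: two-scale convergence.} We must show that, for every $w\in L^{2}(\Omega;C_{\sharp}(Y))$,
\begin{equation*}
\int_\Omega v\left(x,\frac{x}{\varepsilon_h}\right)w\left(x,\frac{x}{\varepsilon_h}\right)dx\to\int_\Omega\int_Y v(x,y)w(x,y)\,dy\,dx .
\end{equation*}
The product $g=vw$ lies in $L^{1}(\Omega;C_{\sharp}(Y))$, so it suffices to prove
\begin{equation*}
\int_\Omega g\left(x,\frac{x}{\varepsilon_h}\right)dx\to\int_\Omega\int_Y g\,dy\,dx
\end{equation*}
for all $g\in L^{1}(\Omega;C_{\sharp}(Y))$. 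The bound from Step~1, now in $L^1$ form,
\begin{equation*}
\int_\Omega|g(x,x/\varepsilon_h)|\,dx\le\|g\|_{L^1(\Omega;C_\sharp(Y))},
\end{equation*}
makes both sides continuous in $g$ uniformly in $h$. We may therefore approximate $g$ by finite sums $\sum_i \varphi_i(x)\psi_i(y)$ with $\varphi_i\in L^1(\Omega)$ and $\psi_i\in C_\sharp(Y)$, which are dense in $L^1(\Omega;C_\sharp(Y))$.

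For a single product we need
\begin{equation*}
\int_\Omega\varphi(x)\psi(x/\varepsilon_h)\,dx\to\int_\Omega\varphi\,dx\int_Y\psi\,dy .
\end{equation*}
This is the Riemann--Lebesgue-type lemma: $\psi(\cdot/\varepsilon_h)\rightharpoonup\int_Y\psi$ weak-$*$ in $L^\infty$. To prove it, first take $\varphi$ to be the characteristic function of a cube and count the periodicity cells, with a boundary error of order $\varepsilon_h$. Then extend by linearity and density of step functions in $L^1(\Omega)$, using $\|\psi\|_\infty$ as the uniform bound.

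\textbf{Step 3: weak convergence.} By Step~2 and Theorem~\ref{Th svagtvaskale}, $v(x,x/\varepsilon_h)\rightharpoonup\int_Y v(x,y)\,dy$ in $L^2(\Omega)$. This is \eqref{svag testfunk}, so the first claim follows from the second.

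The main obstacle is the measurability and density part of Step~2: that the products $vw$ of $L^2(\Omega;C_\sharp(Y))$ functions lie in $L^1(\Omega;C_\sharp(Y))$, and that tensor sums are dense there. Both follow from $C_\sharp(Y)$ being separable, using Bochner-space approximation by simple functions. Alternatively, one can cite the corresponding lemma in \cite{LNW} or \cite{All1} directly.
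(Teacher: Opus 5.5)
Your proof is correct. The paper has no argument to compare it with: it only refers to Propositions~40 and~51 of \cite{Flo} for the two claims.

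Your route is self-contained. You reduce everything to a single lemma,
$\int_\Omega g(x,x/\varepsilon_h)\,dx\to\int_\Omega\int_Y g\,dy\,dx$ for $g\in L^{1}(\Omega;C_\sharp(Y))$.
You obtain this lemma from three ingredients: the uniform bound by $\|g\|_{L^{1}(\Omega;C_\sharp(Y))}$, density of tensor sums, and the elementary Riemann--Lebesgue lemma. The weak limit then follows from Theorem~\ref{Th svagtvaskale}. You could get it even more directly by taking $w(x,y)=\phi(x)$ with $\phi\in L^{2}(\Omega)$ in Step~2, which bypasses the uniform-boundedness part of that theorem.

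A side benefit is that your argument also recovers the other two test-function properties the paper quotes from separate sources. Your Step~1 is exactly \eqref{BoundL2(omega,Cper(Y))}. Step~2 with $w=v$ gives $\lim_{h}\|v(\cdot,\cdot/\varepsilon_h)\|_{L^{2}(\Omega)}=\|v\|_{L^{2}(\Omega\times Y)}$, which is \eqref{Testf L2}.

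The citation, by contrast, keeps the survey short and delegates the measurability and density details. You correctly identify these as the only delicate points, and they are settled by the separability of $C_\sharp(Y)$ and approximation by Bochner simple functions.
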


\begin{proof}
See, e.g., Propositions 40 and 51 in \cite{Flo}.
\end{proof}

We also give some key properties of the space of test
functions in order to obtain two-scale compactness. For all $v\in
L^{2}(\Omega ;C_{\sharp }(Y))$,

\begin{equation}
\left\Vert v\left( x,\frac{x}{\varepsilon _{h}}\right) \right\Vert
_{L^{2}(\Omega )}\leq \left\Vert v\right\Vert _{L^{2}(\Omega ;C_{\sharp
}(Y))}  \label{BoundL2(omega,Cper(Y))}
\end{equation}%
and%
\begin{equation}
\underset{h\rightarrow \infty }{\lim }\left\Vert v\left( x,\frac{x}{%
\varepsilon _{h}}\right) \right\Vert _{L^{2}(\Omega )}=\left\Vert
v\right\Vert _{L^{2}(\Omega \times Y)}\text{;}  \label{Testf L2}
\end{equation}%
see, e.g., Theorem 3 in \cite{LNW}. We will relate to the properties (\ref{svag testfunk})--(\ref{Testf L2}) in this periodic setting in a more general framework in
Sections~\ref{Operator governed} and \ref{Unification}.

\subsection{Very weak two-scale convergence}

Holmbom introduced a prototypical form of very weak two-scale convergence in 1997 in \cite{HOL}
as a tool for handling sequences not necessarily $L^{2}(\Omega )$-bounded
appearing in the homogenization of parabolic partial differential equations
exhibiting both spatial and temporal rapid oscillations. It has later been developed and formally defined in the modern sense in \cite{FHOPvery-weak},
where it also was given the name very weak multiscale convergence. We give
the two-scale version of the definition stated in \cite{FHOPvery-weak}.

\begin{definition}
A sequence $\left\{ w_{\varepsilon
_{h}}\right\} $ in $L^{1}(\Omega )$ is said to two-scale converge very
weakly to $w_{0}\in L^{1}(\Omega \times Y)$ if%
\begin{equation*}
\lim_{h\rightarrow \infty }\int_{\Omega }w_{\varepsilon _{h}}\left( x\right)
v_{1}\left( x\right) v_{2}\left( \frac{x}{\varepsilon _{h}}\right)
dx=\int_{\Omega }\int_{Y}w_{0}\left( x,y\right) v_{1}\left( x\right)
v_{2}\left( y\right) dydx
\end{equation*}%
for any $v_{1}\in D(\Omega )$ and $v_{2}\in C_{\sharp }^{\infty }(Y)/\mathbb{R}$ where%
\begin{equation}
\int_{Y}w_{0}\left( x,y\right) dy=0\text{.}
\label{villkor unikhet very weak twoscale}
\end{equation}%
This is denoted%
\begin{equation*}
w_{\varepsilon _{h}}\left( x\right) \underset{\operatorname{\mathrm{vw}}}{\overset{2}{%
\rightharpoonup }}w_{0}\left( x,y\right) \text{.}
\end{equation*}
\end{definition}

The set of test functions is a subset of those in classical two-scale convergence which together with condition (\ref{villkor unikhet very weak twoscale}) filters out the global trend (in contrast to in two-scale convergence). 

In order to see the difference between a very weak two-scale limit and a
classical two-scale limit we consider the sequence represented in the leftmost pictures in Figure~\ref{Figur: two-scale}. Figure~\ref{Figur: very
weak} shows the two-scale limit and the very weak two-scale limit for this
sequence. Note that this particular sequence is bounded and therefore
exhibits both limits while an unbounded sequence might not have a two-scale
limit but still manages to attain a very weak two-scale limit. An example of
a sequence that is not bounded in $L^{2}(\Omega )$ and does not attain a
two-scale limit is shown in Figure~\ref{Figur: very weak unbounded} together
with its very weak two-scale limit.%
\begin{figure}[H]
    \centering
    \includegraphics[width=0.4\linewidth]{twoscale1.png}
    \includegraphics[width=0.4\linewidth]{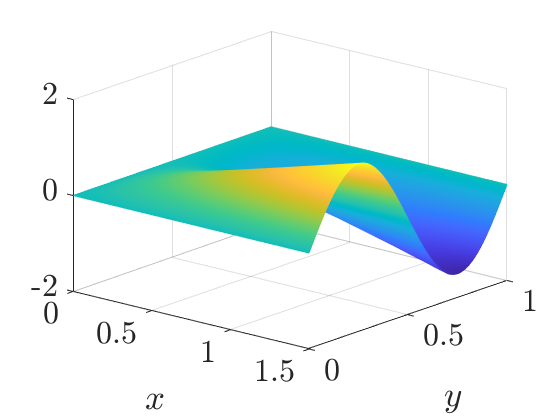}
    \caption{The two-scale limit (to the left) and the very weak
    two-scale limit (to the right) of the sequence from Figure~\ref{Figur: two-scale}.}\label{Figur: very weak}
\end{figure}
\begin{figure}[H]
    \centering
    \includegraphics[width=0.4\linewidth]{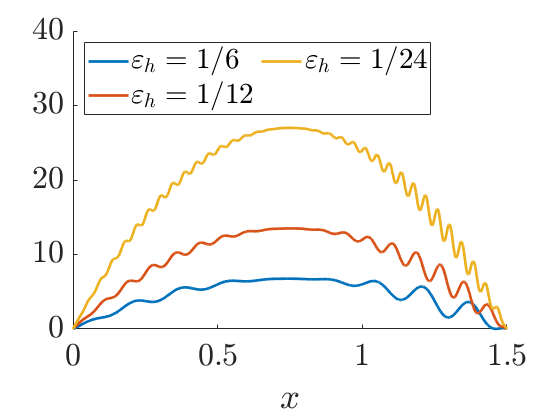}
    \includegraphics[width=0.4\linewidth]{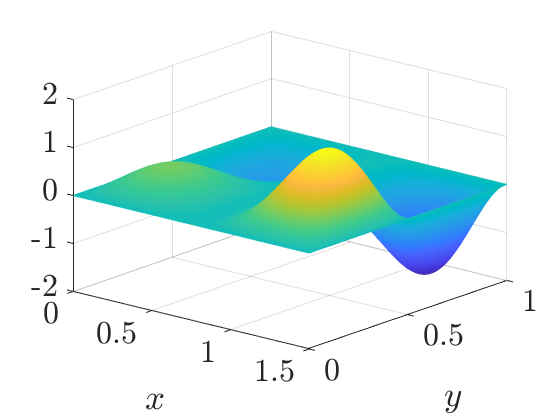}
    \caption{A sequence, not bounded in $L^2(\Omega)$, for three different
    $\varepsilon_h$ (to the left) and its very weak two-scale limit (to the
    right).}\label{Figur: very weak unbounded}
\end{figure}

We give the following compactness result which concerns sequences of the
form $\left\{ \varepsilon _{h}^{-1}u_{\varepsilon _{h}}\right\} $\ which
might not be bounded in $L^{2}(\Omega )$.

\begin{theorem}
\label{Th: compactness very weak two-scale}Let $\left\{ u_{\varepsilon
_{h}}\right\} $ be a bounded sequence in $H_{0}^{1}(\Omega )$. Then, up to a
subsequence,%
\begin{equation}
\label{eq:compvw}
\varepsilon _{h}^{-1}u_{\varepsilon _{h}}\left( x\right) \overset{2}{%
\underset{\mathrm{vw}}{\rightharpoonup }}u_{1}\left( x,y\right) \text{,}
\end{equation}%
where $u_{1}\in L^{2}(\Omega ;H_{\sharp }^{1}(Y)/\mathbb{R})$.
\end{theorem}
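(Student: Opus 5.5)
The plan is to derive the very weak limit from the classical two-scale compactness of the gradients. Since $\{u_{\varepsilon_h}\}$ is bounded in $H_0^1(\Omega)$, both $\{u_{\varepsilon_h}\}$ and $\{\nabla u_{\varepsilon_h}\}$ are bounded in $L^2(\Omega)$. By Theorem~\ref{Th twoskalecompact}, applied componentwise, together with the standard gradient compactness result for two-scale convergence (e.g.\ Theorem~20 in \cite{LNW}), we can pass to a subsequence along which
\begin{equation*}
u_{\varepsilon_h}\to u \text{ in } L^2(\Omega), \qquad \nabla u_{\varepsilon_h}\overset{2}{\rightharpoonup}\nabla u+\nabla_y u_1,
\end{equation*}
for some $u\in H_0^1(\Omega)$ and $u_1\in L^2(\Omega;H^1_\sharp(Y)/\mathbb{R})$. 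We will show that this same $u_1$ is the very weak limit of $\{\varepsilon_h^{-1}u_{\varepsilon_h}\}$. Condition (\ref{villkor unikhet very weak twoscale}) then holds automatically, because $u_1$ has mean zero in $y$.

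The key step is an integration by parts in the fast variable. Fix $v_1\in D(\Omega)$ and $v_2\in C^\infty_\sharp(Y)/\mathbb{R}$. Since $v_2$ is smooth, periodic and has mean zero, we can solve $\Delta_y\psi=v_2$ with $\psi\in C^\infty_\sharp(Y)$. Setting $\Psi=\nabla_y\psi$ gives $v_2=\nabla_y\cdot\Psi$, with $\Psi$ smooth and periodic. By the chain rule,
\begin{equation*}
v_2\left(\frac{x}{\varepsilon_h}\right)=\varepsilon_h\,\nabla\cdot\left(\Psi\left(\frac{x}{\varepsilon_h}\right)\right).
\end{equation*}
We then integrate by parts, which is legitimate since $v_1$ has compact support:
\begin{equation*}
\int_\Omega \varepsilon_h^{-1}u_{\varepsilon_h}v_1 v_2\left(\frac{x}{\varepsilon_h}\right)dx=-\int_\Omega \nabla u_{\varepsilon_h}\cdot\Psi\left(\frac{x}{\varepsilon_h}\right)v_1\,dx-\int_\Omega u_{\varepsilon_h}\nabla v_1\cdot\Psi\left(\frac{x}{\varepsilon_h}\right)dx .
\end{equation*}
The factor $\varepsilon_h^{-1}$ is exactly cancelled by the $\varepsilon_h$ produced by the chain rule. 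This is where the mean-zero restriction on the test functions is essential.

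Next we pass to the limit in each term. In the first term, $v_1\Psi$ is an admissible test function in $L^2(\Omega;C_\sharp(Y))$, so the two-scale convergence of the gradients gives the limit
\begin{equation*}
-\int_\Omega\int_Y(\nabla u+\nabla_y u_1)\cdot\Psi\,v_1\,dy\,dx .
\end{equation*}
In the second term, $u_{\varepsilon_h}\to u$ strongly and $\Psi(x/\varepsilon_h)\rightharpoonup\int_Y\Psi\,dy$ weakly by (\ref{svag testfunk}), so the limit is $-\int_\Omega u\nabla v_1\cdot\int_Y\Psi\,dy\,dx$. Integrating by parts in $x$, this equals $\int_\Omega\nabla u\, v_1\cdot\int_Y\Psi\,dy\,dx$, which cancels the $\nabla u$ contribution of the first term. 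What remains is
\begin{equation*}
-\int_\Omega\int_Y\nabla_y u_1\cdot\Psi\,v_1\,dy\,dx .
\end{equation*}
Integrating by parts in $y$, using periodicity and $\nabla_y\cdot\Psi=v_2$, this equals $\int_\Omega\int_Y u_1 v_1 v_2\,dy\,dx$, which is (\ref{eq:compvw}).

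The main obstacle is the identification of the gradient two-scale limit in the form $\nabla u+\nabla_y u_1$ with $u_1\in L^2(\Omega;H^1_\sharp(Y)/\mathbb{R})$. This is not among the results stated earlier in the paper, so it would be cited from \cite{LNW} or \cite{All1}. The only other delicate point is the divergence representation $v_2=\nabla_y\cdot\Psi$, and it is precisely there that the requirement that $v_2$ have mean zero is used.
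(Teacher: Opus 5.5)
Your proof is correct. The paper gives no argument of its own here, only a pointer to Theorem~4 in \cite{FHOPvery-weak}. What you have written is a self-contained version of the standard mechanism behind such results, and as far as I know the mechanism used in that reference: write the mean-zero $v_2$ as $\nabla_y\cdot\Psi$ with $\Psi=\nabla_y\psi$, let the chain rule absorb $\varepsilon_h^{-1}$, integrate by parts in $x$, and pass to the limit using the gradient two-scale limit $\nabla u+\nabla_y u_1$.

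Doing it explicitly buys two things the citation leaves implicit. First, it shows that the very weak limit is exactly the corrector $u_1$ of Corollary~\ref{Th: gradsplit}. The paper relies on this identification in Section~\ref{Unification}, but it is not part of the statement as given. Second, it shows that only $v_1\in D(\Omega)$ is used in the integration by parts, so boundedness in $H^1(\Omega)$ would suffice in place of $H_0^1(\Omega)$, given the smooth boundary assumed in the paper.

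One small simplification is available. Since $\Psi=\nabla_y\psi$ with $\psi$ periodic, $\int_Y\Psi\,dy=0$. Hence the second term and the $\nabla u$-part of the first term each vanish on their own, and your cancellation via integration by parts in $x$ is not needed, although it is correct.
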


\begin{proof}
See Theorem~4 in \cite{FHOPvery-weak}.
\end{proof}

\begin{remark}
\label{Remark vw}
    The original approach, presented in \cite{HOL}, concerns finding a limit for the sequence $\left\{ \varepsilon _{h}^{-1}\left( u_{\varepsilon _{h}} - u\right) \right\}$ where $u$ is the weak $H_{0}^{1}(\Omega )$-limit of $\left\{ u_{\varepsilon_{h}} \right\}$. The idea behind the abbreviation of $\left\{ \varepsilon_{h}^{-1} \left( u_{\varepsilon _{h}} -u\right) \right\}$ to $\left\{ \varepsilon_{h}^{-1} u_{\varepsilon _{h}}\right\}$ can be traced to a closely related result given by Nguetseng and Woukeng in \cite{NgWo1} within the framework of $\Sigma$-convergence.
\end{remark}

\section{The periodic unfolding method\label{Unfolding}}

The periodic unfolding method was
introduced in 2002 by Cioranescu et al. in \cite{CiDaGr2} and was developed
from a method using a \textquotedblleft dilation\textquotedblright\ operator
for the homogenization of a periodic medium; see, e.g., \cite{ADH}. As for
two-scale convergence, the unfolding method also uses multiple scales but in
a different way. Here the micro-oscillations are first unfolded in a second
variable and then the passage to the limit is done using weak $L^{2}$-convergence. Similar techniques have later been used for different kinds of problems; see, e.g., \cite{BLM}, \cite{Len}, \cite{AC}, \cite{ACV} and \cite{CD}.

To establish the setting we let $\xi + Y$, $\xi \in \mathbb{Z}^N$, be a translation of the unit cube $Y$ by $\xi$ and note that $\bigcup_{\xi \in \mathbb{Z}^N} \left(\xi + \overline{Y} \right)$ covers $\mathbb{R}^N$, i.e., the set of disjoint open unit cubes $Y$ with integer vertices covers $\mathbb{R}^N$ up to a set of measure zero. For a.e. $z \in \mathbb{R}^N$ let $\left[ z\right] _{Y}$ denote the unique vector in $\mathbb{Z}^N$ such that $z - \left[ z\right]_{Y} \in Y$ and denote $\left\{ z\right\}_{Y} = z - \left[ z\right]_{Y}$. Thus,
\begin{equation}
\label{decomposition z}
    z = \left[ z\right]_{Y} + \left\{ z\right\}_{Y}\text{,}
\end{equation}
i.e., $z$ is decomposed into an integer part $\left[ z \right]_{Y}$ and a fractional part $\left\{ z \right\}_{Y}$. Scaling by $\varepsilon$ and letting $z = x/\varepsilon$ in (\ref{decomposition z}) we have
\begin{equation*}
x=\varepsilon \left( \left[ \frac{x}{\varepsilon }\right] _{Y}+\left\{ \frac{%
x}{\varepsilon }\right\} _{Y}\right) 
\end{equation*}%
where the terms form a decomposition of $x \in \mathbb{R}^N$ in terms of an $\varepsilon$-cube $\varepsilon \left( \xi + Y \right)$, $\xi \in \mathbb{Z}^N$. See Figure~\ref{Figur: decomposition z}.
\begin{figure}[H]
    \centering
    \includegraphics[width=0.49\linewidth]{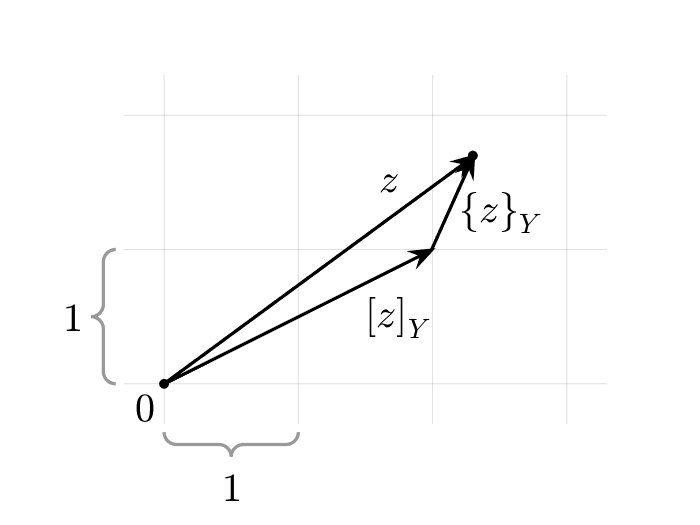}
    \includegraphics[width=0.49\linewidth]{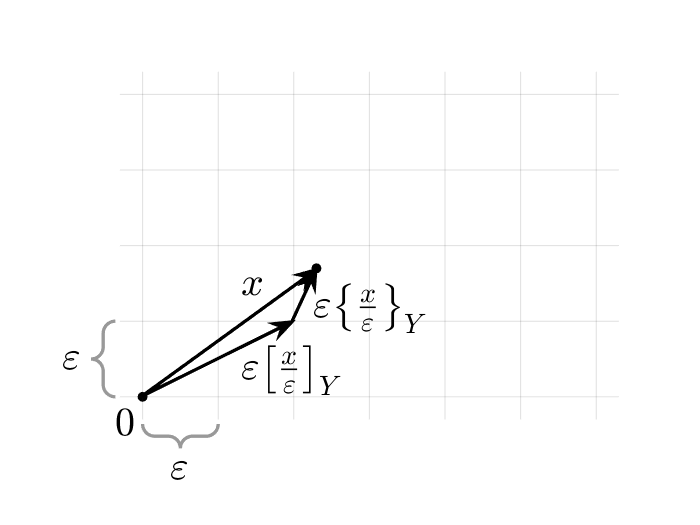}
    \caption{Decomposition of $z \in \mathbb{R}^N$ in terms of a unit cube (to the left) and decomposition of $x \in \mathbb{R}^N$ in terms of an $\varepsilon$-cube (to the right).} \label{Figur: decomposition z}  
\end{figure}
\begin{remark}
\label{rem: y-per function unfolding}
    For a $Y$-periodic function $v$ it holds that
    \begin{equation*}
        v \left( \left\{ y \right\}_Y \right)
        = v \left( y - \left[ y \right]_Y \right)
        = v \left( y \right) 
    \end{equation*}
    for a.e. $y \in Y$.
\end{remark}

Finally, we introduce the sets
\begin{equation*}
    \Xi_\varepsilon = \left\{ \xi \in \mathbb{Z}^N : \varepsilon \left( \xi + Y \right) \subset \Omega \right\} \text{,}
\end{equation*}
\begin{equation*}
    \widehat{\Omega}_\varepsilon = \mathrm{interior} \left\{ \bigcup_{\xi \in \Xi_\varepsilon} \varepsilon \left( \xi + \overline{Y} \right) \right\}
\end{equation*}
and $\Lambda_\varepsilon = \Omega \setminus \widehat{\Omega}_\varepsilon$. Informally, $\widehat{\Omega}_\varepsilon$ can be viewed as the set of whole $\varepsilon$-cubes contained in $\Omega$, together with their common boundaries, whereas $\Lambda_\varepsilon$ consists of the parts of the $\varepsilon$-cubes intersecting $\partial \Omega$ that lie within $\Omega$, see Figure~\ref{Figur: partition of omega}.
\begin{figure}[H]
    \centering
    \includegraphics[width=0.43\linewidth]{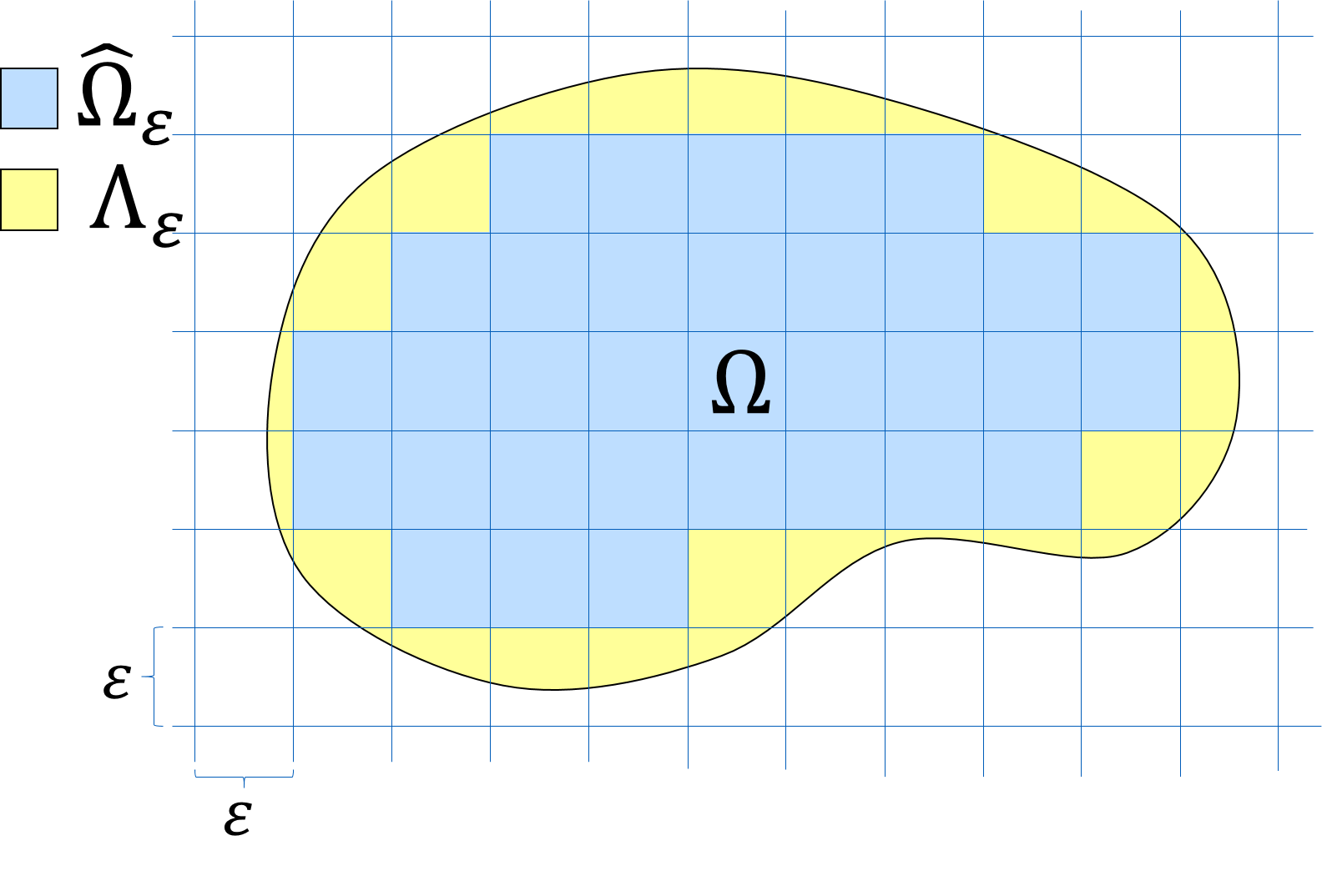}
    \caption{Partition of $\Omega$.} \label{Figur: partition of omega}  
\end{figure}
\begin{remark}
    The set $\Lambda_\varepsilon$ satisfies
    \begin{equation*}
        \Lambda_\varepsilon \subset \left\{ x \in \Omega : \mathrm{dist} \left( x, \partial \Omega \right) < \varepsilon \sqrt{N} \right\}
    \end{equation*}
    and, since $\partial \Omega$ is bounded,
    \begin{equation*}
        \lim_{\varepsilon \to 0} \left| \Lambda_\varepsilon \right| = 0
    \end{equation*}
    where $\left| \Lambda_\varepsilon \right|$ is the Lebesgue measure of $\Lambda_\varepsilon$. See, e.g., Remark~1.1 in \cite{CiDaGr3}.
\end{remark}
Following \cite{CiDaGr1} we define the unfolding operator $%
\mathcal{T}_{\varepsilon }$ as follows.
\begin{definition}
For $u$ Lebesgue-measurable in $\Omega $, the unfolding operator $\mathcal{T}%
_{\varepsilon }$ is defined as%
\begin{equation*}
\mathcal{T}_{\varepsilon} \left( u \right) \left( x,y \right) = \left \{
\begin{aligned}
&u \left( \varepsilon \left[ \frac{x}{\varepsilon} \right]_Y + \varepsilon y \right) && \text{for a.e. $\left( x,y \right) \in \widehat{\Omega}_{\varepsilon} \times Y$,} \\
&0 && \text{for a.e. $\left( x,y \right) \in \Lambda_{\varepsilon} \times Y$.}
\end{aligned} \right.
\end{equation*}
\end{definition}

Note that $\mathcal{T}_{\varepsilon }\left( u\right) $ is
Lebesgue-measurable in $\Omega \times Y$. Note also that as long as $x$
remains in the same $\varepsilon $-cell, i.e., on each $Y_{i}^{\varepsilon
}\times Y$, $\mathcal{T}_{\varepsilon }\left( u\right) $ is constant with
respect to $x$, see Figure~\ref{Figur: unfolding} which shows the
unfoldings of the oscillating functions presented in Figure~\ref{Figur: two-scale}.%
\begin{figure}[H]
    \centering
    \includegraphics[width=0.4\linewidth]{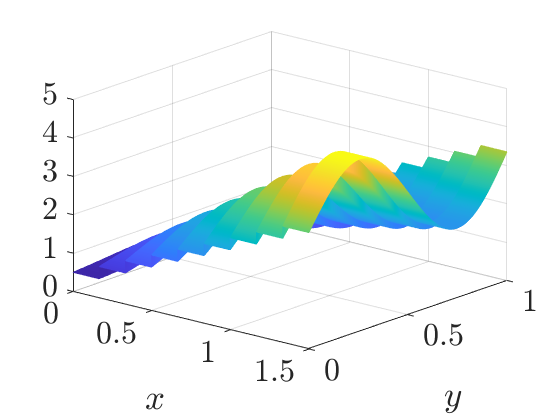}
    \includegraphics[width=0.4\linewidth]{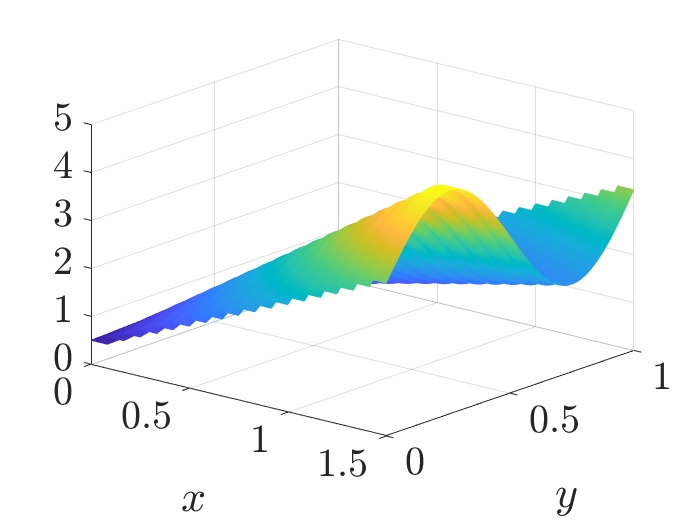}
    \caption{The unfoldings for the functions $u_{\varepsilon _{h}}$ in the two leftmost pictures in Figure~\ref{Figur:
    two-scale}.}\label{Figur: unfolding}
\end{figure}

The following proposition is essential for obtaining compactness of the unfolded sequence.

\begin{proposition}
\label{Prop5}The operator $\mathcal{T}_{\varepsilon }:L^{2}(\Omega
)\rightarrow L^{2}(\Omega \times Y)$ is linear and continuous. For every $%
u\in L^{2}(\Omega )$,%
\begin{equation}
\label{unfolding_linj_kont}
\left\Vert \mathcal{T}_{\varepsilon }\left( u\right) \right\Vert
_{L^{2}(\Omega \times Y)}\leq \left\Vert u\right\Vert _{L^{2}(\Omega )}\text{.}
\end{equation}
\end{proposition}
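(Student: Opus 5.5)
Linearity is immediate from the definition: for $u,w$ measurable and scalars $\alpha,\beta$, the map $u\mapsto u\left(\varepsilon\left[\frac{x}{\varepsilon}\right]_Y+\varepsilon y\right)$ on $\widehat{\Omega}_\varepsilon\times Y$ is composition with a fixed map, and on $\Lambda_\varepsilon\times Y$ both sides vanish. So $\mathcal{T}_\varepsilon(\alpha u+\beta w)=\alpha\mathcal{T}_\varepsilon(u)+\beta\mathcal{T}_\varepsilon(w)$ a.e. Continuity then follows from the bound (\ref{unfolding_linj_kont}), so the plan is to prove that estimate, in fact as an identity on $\widehat{\Omega}_\varepsilon$.

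Since $\mathcal{T}_\varepsilon(u)$ vanishes on $\Lambda_\varepsilon\times Y$, we have
\[
\|\mathcal{T}_\varepsilon(u)\|^2_{L^2(\Omega\times Y)}=\int_{\widehat{\Omega}_\varepsilon}\int_Y \left|u\left(\varepsilon\left[\tfrac{x}{\varepsilon}\right]_Y+\varepsilon y\right)\right|^2dy\,dx .
\]
Up to a null set (the common cube boundaries), $\widehat{\Omega}_\varepsilon$ is the disjoint union of the cubes $\varepsilon(\xi+Y)$, $\xi\in\Xi_\varepsilon$. For $x\in\varepsilon(\xi+Y)$ we have $\left[\frac{x}{\varepsilon}\right]_Y=\xi$, so the integrand does not depend on $x$ within that cube. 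Each cube has measure $\varepsilon^N$, so the integral becomes
\[
\sum_{\xi\in\Xi_\varepsilon}\varepsilon^N\int_Y|u(\varepsilon\xi+\varepsilon y)|^2dy .
\]

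Next apply the change of variables $z=\varepsilon\xi+\varepsilon y$, with $dz=\varepsilon^N dy$. This maps $Y$ onto $\varepsilon(\xi+Y)\subset\Omega$, so each term equals $\int_{\varepsilon(\xi+Y)}|u(z)|^2dz$. Summing over the disjoint cubes gives
\[
\|\mathcal{T}_\varepsilon(u)\|^2_{L^2(\Omega\times Y)}=\int_{\widehat{\Omega}_\varepsilon}|u|^2dz\le\|u\|^2_{L^2(\Omega)} .
\]
This proves the estimate, and by linearity it also shows that $\mathcal{T}_\varepsilon$ maps $L^2(\Omega)$ into $L^2(\Omega\times Y)$ continuously.

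The only delicate point, and hence the main obstacle, is well-definedness. We must check that $\mathcal{T}_\varepsilon(u)$ does not depend on the representative of $u$ and is measurable on $\Omega\times Y$. On each cell $\varepsilon(\xi+Y)\times Y$ it is $u$ composed with the affine map $(x,y)\mapsto\varepsilon\xi+\varepsilon y$. The preimage of a null set under this map is (null set in $y$) $\times$ cell, which is null. So functions equal a.e. have unfoldings equal a.e., and measurability follows cell by cell. Only countably many cells occur, and boundaries have measure zero, so these properties hold on all of $\Omega\times Y$.
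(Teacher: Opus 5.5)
Your proof is correct and follows the standard argument the paper relies on; the paper itself only cites Proposition~1.8~(iii) in \cite{CiDaGr3}. That argument splits $\widehat{\Omega}_\varepsilon$ into the $\varepsilon$-cells and changes variables on each cell, giving $\|\mathcal{T}_\varepsilon(u)\|^2_{L^2(\Omega\times Y)}=\int_{\widehat{\Omega}_\varepsilon}|u|^2\,dx$, which is exactly the equality for $u$ restricted to $\widehat{\Omega}_\varepsilon$ noted in the remark after the proposition (the cited source phrases the same computation as the integration formula for $\mathcal{T}_\varepsilon(\phi)$ applied to $\phi=|u|^2$).
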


\begin{proof}
See Proposition~1.8 (iii) in \cite{CiDaGr3}.
\end{proof}

\begin{remark}
    For $u$ restricted to $\widehat{\Omega }_{\varepsilon}$ we have equality in (\ref{unfolding_linj_kont}). See the proof of Proposition~1.8~(iii) in \cite{CiDaGr3}.
\end{remark}

If $\left\{ u_{\varepsilon _{h}}\right\}$ is a bounded sequence in $L^{2}(\Omega )$, by Proposition~\ref{Prop5}, $\left\{ \mathcal{T}_{\varepsilon _{h}}\left( u_{\varepsilon _{h}}\right) \right\}$ is bounded in the space $L^{2}(\Omega \times Y)$. By weak compactness in $L^{2}(\Omega \times Y)$, we get, up to a subsequence,
\begin{equation}
\mathcal{T}_{\varepsilon _{h}}\left( u_{\varepsilon _{h}}\right) \left(
x,y\right) \rightharpoonup u_{0}\left( x,y\right) \text{ in }L^{2}(\Omega
\times Y).
\label{unfoldcompactformel}
\end{equation}%

Since $u_{0}$\ here is the same as the two-scale limit in Theorem~\ref{Th
twoskalecompact} the weak $L^{2}(\Omega \times Y)$-limit of the
unfolded sequence illustrated in Figure~\ref{Figur: unfolding} is
identical to the function depicted to the right in Figure~\ref{Figur: two-scale}.
See further Section~\ref{Unification} and in particular Proposition~\ref{Pr
ekvivalens}.

In line with Theorem~\ref{Th svagtvaskale}, we have the following relation between the weak $L^{2}(\Omega \times Y)$-limit of the unfolded sequence and
the weak $L^{2}(\Omega )$-limit of the sequence.

\begin{proposition} \label{Prop: unfolding gives weak}
Let $\left\{ u_{\varepsilon _{h}}\right\} $
be a sequence in $L^{2}(\Omega )$ such that $\left\{ \mathcal{T}%
_{\varepsilon _{h}}\left( u_{\varepsilon _{h}}\right) \right\} $ converges
weakly to $u_{0}$ in $L^{2}(\Omega \times Y)$. Then%
\begin{equation*}
u_{\varepsilon _{h}}\left( x\right) \rightharpoonup
\int\nolimits_{Y}u_{0}\left( x,y\right) dy\text{ in }L^{2}(\Omega )\text{.}
\end{equation*}
\end{proposition}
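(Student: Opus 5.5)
The plan is to test against a fixed $\varphi\in L^{2}(\Omega)$ and move the integral to $\Omega\times Y$ using the exact integration formula for the unfolding operator, then control the boundary layer $\Lambda_{\varepsilon_h}$ separately.

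\textbf{Step 1: integration formula.} Let $w\in L^{1}(\Omega)$. Split $\widehat{\Omega}_{\varepsilon}$ into the cells $\varepsilon(\xi+Y)$, $\xi\in\Xi_\varepsilon$. On such a cell $\mathcal{T}_{\varepsilon}(w)(x,y)=w(\varepsilon\xi+\varepsilon y)$, which is independent of $x$. The change of variables $z=\varepsilon\xi+\varepsilon y$ then gives
\begin{equation*}
\int_{\Omega}\int_{Y}\mathcal{T}_{\varepsilon}(w)\,dy\,dx=\int_{\widehat{\Omega}_{\varepsilon}}w\,dx .
\end{equation*}
The unfolding operator is multiplicative, $\mathcal{T}_{\varepsilon}(u\varphi)=\mathcal{T}_{\varepsilon}(u)\mathcal{T}_{\varepsilon}(\varphi)$. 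Applying the formula to $w=u_{\varepsilon_h}\varphi$ yields
\begin{equation*}
\int_{\Omega}u_{\varepsilon_h}\varphi\,dx=\int_{\Omega\times Y}\mathcal{T}_{\varepsilon_h}(u_{\varepsilon_h})\,\mathcal{T}_{\varepsilon_h}(\varphi)\,dy\,dx+\int_{\Lambda_{\varepsilon_h}}u_{\varepsilon_h}\varphi\,dx .
\end{equation*}

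\textbf{Step 2: the main term.} I will show $\mathcal{T}_{\varepsilon}(\varphi)\to\varphi$ strongly in $L^{2}(\Omega\times Y)$ for every $\varphi\in L^{2}(\Omega)$.

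For $\varphi\in C(\overline{\Omega})$, uniform continuity gives $|\mathcal{T}_\varepsilon(\varphi)(x,y)-\varphi(x)|\le\omega(\varepsilon\sqrt N)$ on $\widehat\Omega_\varepsilon\times Y$, where $\omega$ is the modulus of continuity. On $\Lambda_\varepsilon\times Y$ the difference is $|\varphi(x)|$, whose $L^{2}$ norm tends to $0$ because $|\Lambda_\varepsilon|\to0$.

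For general $\varphi$, approximate by continuous functions. The bound \eqref{unfolding_linj_kont} gives $\|\mathcal{T}_\varepsilon(\varphi-\psi)\|\le\|\varphi-\psi\|$ uniformly in $\varepsilon$, so the convergence passes to the limit.

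Weak convergence of $\mathcal{T}_{\varepsilon_h}(u_{\varepsilon_h})$ to $u_0$ combined with this strong convergence gives
\begin{equation*}
\int_{\Omega\times Y}u_0\varphi\,dy\,dx=\int_\Omega\Big(\int_Y u_0\,dy\Big)\varphi\,dx
\end{equation*}
as the limit of the main term.

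\textbf{Step 3: the boundary term (main obstacle).} To make $\int_{\Lambda_{\varepsilon_h}}u_{\varepsilon_h}\varphi\,dx$ vanish I need $\{u_{\varepsilon_h}\}$ bounded in $L^{2}(\Omega)$. Then Cauchy--Schwarz bounds the term by $C\|\varphi\|_{L^2(\Lambda_{\varepsilon_h})}$, which tends to $0$ by absolute continuity of the integral since $|\Lambda_{\varepsilon_h}|\to0$.

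Weak convergence of the unfolded sequence only bounds $\|\mathcal{T}_{\varepsilon_h}(u_{\varepsilon_h})\|_{L^2(\Omega\times Y)}=\|u_{\varepsilon_h}\|_{L^2(\widehat\Omega_{\varepsilon_h})}$, by the equality case of Proposition~\ref{Prop5}. It gives no control on $\Lambda_{\varepsilon_h}$. Indeed, $u_{\varepsilon_h}$ could carry large mass in the boundary layer, which the unfolding never sees.

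I would therefore read the statement as including $L^{2}(\Omega)$-boundedness of $\{u_{\varepsilon_h}\}$, as in the compactness discussion preceding it and in Theorem~\ref{Th svagtvaskale}. If that hypothesis is not available, I would instead conclude convergence $u_{\varepsilon_h}\chi_{\widehat\Omega_{\varepsilon_h}}\rightharpoonup\int_Y u_0\,dy$. That conclusion follows from Steps 1--2 alone, since the boundary term is then absent.
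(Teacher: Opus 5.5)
Your argument is correct, and it is essentially the proof behind the paper's citation (Proposition~2.9~(iii) in \cite{CiDaGr1}): the integration formula with multiplicativity of $\mathcal{T}_{\varepsilon_h}$, the strong convergence $\mathcal{T}_{\varepsilon_h}(\varphi)\to\varphi$ in $L^2(\Omega\times Y)$, and Cauchy--Schwarz on the layer $\Lambda_{\varepsilon_h}$.

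You are also right that $L^2(\Omega)$-boundedness of $\{u_{\varepsilon_h}\}$ must be added, and that boundedness is the correct repair. Without it the statement as printed is false. Take $u_{\varepsilon_h}=h\,|\Lambda_{\varepsilon_h}|^{-1/2}\chi_{\Lambda_{\varepsilon_h}}$, where $|\Lambda_{\varepsilon_h}|>0$ because $\partial\Omega$ is smooth. Then $\mathcal{T}_{\varepsilon_h}(u_{\varepsilon_h})=0$, while $\|u_{\varepsilon_h}\|_{L^2(\Omega)}=h\to\infty$, so no weak limit exists. The cited result is formulated for relatively weakly compact sequences, which in $L^2$ means bounded ones. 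The paper itself only applies the proposition to bounded sequences, in the proof of Theorem~\ref{Pr adj tvaskalekomp}.
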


\begin{proof}
See Proposition 2.9 (iii) in \cite{CiDaGr1}.
\end{proof}

Analogous to Theorem \ref{Thtvaskalastark}, we also have the following relation to the strong $L^{2}(\Omega )$-limit.

\begin{theorem}
Let $\left\{ u_{\varepsilon _{h}}\right\} $ be a
sequence in $L^{2}(\Omega )$ which converges strongly to $u\in L^{2}(\Omega
) $. Then%
\begin{equation*}
\mathcal{T}_{\varepsilon _{h}}\left( u_{\varepsilon _{h}}\right) \left(
x,y\right) \rightarrow u\left( x\right) \text{ in }L^{2}(\Omega \times Y).
\end{equation*}
\end{theorem}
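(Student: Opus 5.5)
The plan is to split the difference with the triangle inequality,
\begin{equation*}
\left\Vert \mathcal{T}_{\varepsilon _{h}}\left( u_{\varepsilon _{h}}\right) -u\right\Vert _{L^{2}(\Omega \times Y)}\leq \left\Vert \mathcal{T}_{\varepsilon _{h}}\left( u_{\varepsilon _{h}}-u\right) \right\Vert _{L^{2}(\Omega \times Y)}+\left\Vert \mathcal{T}_{\varepsilon _{h}}\left( u\right) -u\right\Vert _{L^{2}(\Omega \times Y)}\text{,}
\end{equation*}
using that $\mathcal{T}_{\varepsilon _{h}}$ is linear. By Proposition~\ref{Prop5} the first term is bounded by $\left\Vert u_{\varepsilon _{h}}-u\right\Vert _{L^{2}(\Omega )}$, which tends to zero by assumption. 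The whole problem therefore reduces to showing that $\mathcal{T}_{\varepsilon }\left( u\right) \rightarrow u$ in $L^{2}(\Omega \times Y)$ for a single fixed $u\in L^{2}(\Omega )$.

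For this fixed-function statement I would use a density argument. First take $\varphi \in C_{c}(\Omega )$ (continuous with compact support in $\Omega $), which is dense in $L^{2}(\Omega )$.

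\begin{itemize}
\item For $x\in \widehat{\Omega }_{\varepsilon }$ and $y\in Y$, the points $\varepsilon \left[ x/\varepsilon \right] _{Y}+\varepsilon y$ and $x$ lie in the same $\varepsilon $-cube. Hence $\left\vert \mathcal{T}_{\varepsilon }\left( \varphi \right) \left( x,y\right) -\varphi \left( x\right) \right\vert \leq \omega _{\varphi }\left( \varepsilon \sqrt{N}\right) $, where $\omega _{\varphi }$ is the modulus of uniform continuity of $\varphi $.
\item On $\Lambda _{\varepsilon }\times Y$ the unfolding is zero, so the error there is $\left\Vert \varphi \right\Vert _{L^{2}(\Lambda _{\varepsilon })}\leq \left\Vert \varphi \right\Vert _{\infty }\left\vert \Lambda _{\varepsilon }\right\vert ^{1/2}$. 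This tends to zero by the remark that $\left\vert \Lambda _{\varepsilon }\right\vert \rightarrow 0$. In fact it vanishes for small $\varepsilon $, since $\operatorname{supp}\varphi $ is at positive distance from $\partial \Omega $.
\end{itemize}

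Together these give $\left\Vert \mathcal{T}_{\varepsilon }\left( \varphi \right) -\varphi \right\Vert _{L^{2}(\Omega \times Y)}\leq \left\vert \Omega \right\vert ^{1/2}\omega _{\varphi }(\varepsilon \sqrt{N})+\left\Vert \varphi \right\Vert _{L^{2}(\Lambda _{\varepsilon })}\rightarrow 0$.

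For general $u\in L^{2}(\Omega )$, fix $\delta >0$ and choose $\varphi \in C_{c}(\Omega )$ with $\left\Vert u-\varphi \right\Vert _{L^{2}(\Omega )}<\delta $. Then
\begin{equation*}
\left\Vert \mathcal{T}_{\varepsilon }\left( u\right) -u\right\Vert \leq \left\Vert \mathcal{T}_{\varepsilon }\left( u-\varphi \right) \right\Vert +\left\Vert \mathcal{T}_{\varepsilon }\left( \varphi \right) -\varphi \right\Vert +\left\Vert \varphi -u\right\Vert _{L^{2}(\Omega \times Y)}\text{.}
\end{equation*}
The first and third terms are each at most $\delta $, by Proposition~\ref{Prop5} and because $\left\vert Y\right\vert =1$. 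The middle term tends to zero by the previous step. Hence $\limsup_{\varepsilon \rightarrow 0}\left\Vert \mathcal{T}_{\varepsilon }\left( u\right) -u\right\Vert \leq 2\delta $, and since $\delta $ is arbitrary the limit is zero. Applying this along $\varepsilon =\varepsilon _{h}$ finishes the proof.

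The main point requiring care is the pointwise estimate for continuous $\varphi $ on $\widehat{\Omega }_{\varepsilon }$. One has to check that $\varepsilon \left[ x/\varepsilon \right] _{Y}+\varepsilon y$ stays in $\Omega $ and lies within distance $\varepsilon \sqrt{N}$ of $x$. Both follow from the definition of $\Xi _{\varepsilon }$, since the whole cube $\varepsilon (\xi +Y)$ is contained in $\Omega $.
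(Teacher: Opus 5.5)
Your argument is correct and is essentially the standard proof. The paper gives no proof of its own and only cites Proposition~2.9~(ii) of \cite{CiDaGr1}, where the result is obtained exactly this way: reduce to a fixed $u$ via the contraction estimate of Proposition~\ref{Prop5}, verify the claim for continuous compactly supported functions by uniform continuity (with $\operatorname{supp}\varphi$ eventually disjoint from $\Lambda_\varepsilon$), and extend by density.
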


\begin{proof}
See Proposition 2.9 (ii) in \cite{CiDaGr1}.
\end{proof}

\begin{remark}
\label{Rem: Unfoldalt}
    An alternative way of defining the unfolding operator is
    \begin{equation*}
        \mathcal{T}_{\varepsilon} \left( u \right) \left( x,y \right) = \left \{
        \begin{aligned}
            &u \left( \varepsilon \left[ \frac{x}{\varepsilon} \right]_Y + \varepsilon y \right) && \text{for a.e. $\left( x,y \right) \in \widehat{\Omega}_{\varepsilon} \times Y$,} \\
            &u\left( x \right) && \text{for a.e. $\left( x,y \right) \in \Lambda_{\varepsilon} \times Y$,}
        \end{aligned} \right.
    \end{equation*}
    which implies that the integral is preserved, i.e.,
    \begin{equation*}
        \int_\Omega\int_Y \mathcal{T}_\varepsilon \left( u \right) \left( x,y \right) \mathrm{d}y \mathrm{d}x
        = \int_\Omega u \left( x \right) \mathrm{d}x
    \end{equation*}
    while the corresponding equality does not hold for the original definition.
    In the case where $\left| \Lambda_\varepsilon\right| = 0$ or where $\left| \Lambda_\varepsilon\right| > 0$, $\left| \Lambda_{\varepsilon_n}\right| \to 0$ as $n \to \infty$ and $\left\{ u_{\varepsilon_n} \right\}$ is bounded in $L^2(\Omega)$, the two definitions are equivalent. For a detailed investigation of this approach, see \cite{Francu2010}, \cite{Francu2011} by Franc\r{u} and \cite{FrSv2012} by Franc\r{u} and Svanstedt. Other works where this approach has been used are, e.g., \cite{Nes} and \cite{BoBr}.
\end{remark}

We introduce the so-called averaging operator which we return to in Section~\ref{Unification} where, together with the concept of general two-scale convergence (introduced in Section~\ref{Operator governed}), it will provide us with a unifying bridge between two-scale convergence and unfolding. We have the following definition, given in \cite{CiDaGr1}.

\begin{definition}
The averaging operator $\mathcal{U}_{\varepsilon }:L^{2}(\Omega \times
Y)\rightarrow L^{2}(\Omega )$ is defined as%
\begin{equation*}
\mathcal{U}_{\varepsilon} \left( v \right) \left( x \right) = \left \{
\begin{aligned}
&\int_Y v \left( \varepsilon \left[ \frac{x}{\varepsilon} \right]_Y + \varepsilon z, \left\{ \frac{x}{\varepsilon} \right\}_Y \right) dz && \text{for a.e. $x \in \widehat{\Omega}_{\varepsilon}$,} \\
&0 && \text{for a.e. $x \in \Lambda_{\varepsilon}$.}
\end{aligned} \right.
\end{equation*}
\end{definition}

Figure~\ref{Figur: averaging operator} illustrates $\mathcal{U}_\varepsilon$ operating on a function $v$ in $L^2(\Omega \times Y)$.
\begin{figure}[H]
    \centering
    \includegraphics[width=0.32\linewidth]{twoscale1.png}
    \includegraphics[width=0.32\linewidth]{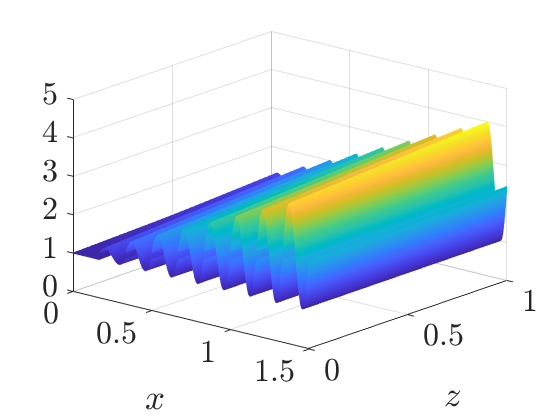}
    \includegraphics[width=0.32\linewidth]{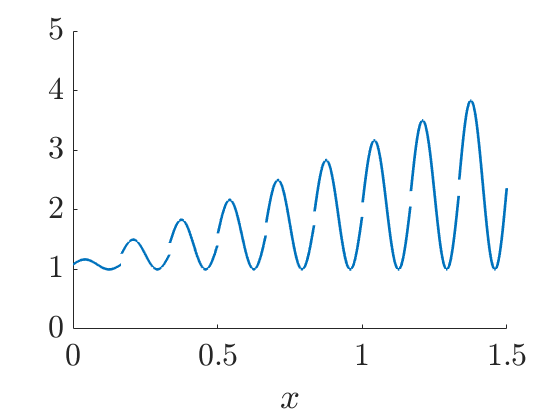}
    \caption{A function $v \in L^2(\Omega \times Y)$ (to the left), $v \left( \varepsilon \left[ \frac{x}{\varepsilon}\right]_Y + \varepsilon z, \left\{ \frac{x}{\varepsilon} \right\}_Y  \right)$ (in the middle) and $\mathcal{U}_{\varepsilon} \left( v \right)$ (to the right) for $\varepsilon = 1/6$.} \label{Figur: averaging operator}
\end{figure}

The following proposition provides us with a formula connecting the
unfolding operator and the averaging operator.

\begin{proposition}
\label{Prop: adjoint}The averaging operator $\mathcal{U}_{\varepsilon }$ is
the adjoint operator of the unfolding operator $\mathcal{T}_{\varepsilon }$,
i.e.,%
\begin{equation*}
\int_{\Omega }\int_{Y}\mathcal{T}_{\varepsilon }\left( u\right) \left(
x,y\right) v\left( x,y\right) dydx=\int_{\Omega }u\left( x\right) \mathcal{U}%
_{\varepsilon }\left( v\right) \left( x\right) dx  
\end{equation*}%
for any $u\in L^{2}(\Omega )$ and $v\in L^{2}(\Omega \times Y)$.
\end{proposition}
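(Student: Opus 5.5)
The plan is to verify the identity by decomposing $\Omega$ into the whole $\varepsilon$-cubes of $\widehat{\Omega}_{\varepsilon}$ and the boundary layer $\Lambda_{\varepsilon}$, and computing both sides cell by cell.

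\textbf{Step 1: the boundary layer.} Both $\mathcal{T}_{\varepsilon}(u)$ on $\Lambda_{\varepsilon}\times Y$ and $\mathcal{U}_{\varepsilon}(v)$ on $\Lambda_{\varepsilon}$ vanish. Hence both sides of the identity reduce to integrals over $\widehat{\Omega}_{\varepsilon}$. Since $\widehat{\Omega}_{\varepsilon}$ equals $\bigcup_{\xi\in\Xi_{\varepsilon}}\varepsilon(\xi+Y)$ up to a null set (the common cube faces), and $\Xi_{\varepsilon}$ is finite because $\Omega$ is bounded, it suffices to prove, for each fixed $\xi\in\Xi_{\varepsilon}$, that
\begin{equation*}
\int_{\varepsilon(\xi+Y)}\int_{Y}\mathcal{T}_{\varepsilon}(u)(x,y)v(x,y)\,dy\,dx=\int_{\varepsilon(\xi+Y)}u(x)\,\mathcal{U}_{\varepsilon}(v)(x)\,dx .
\end{equation*}

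\textbf{Step 2: the left-hand side on one cell.} For $x\in\varepsilon(\xi+Y)$ we have $[x/\varepsilon]_Y=\xi$, so $\mathcal{T}_{\varepsilon}(u)(x,y)=u(\varepsilon\xi+\varepsilon y)$, which is independent of $x$. Applying Fubini, the left side becomes
\begin{equation*}
\int_Y u(\varepsilon\xi+\varepsilon y)\Bigl(\int_{\varepsilon(\xi+Y)}v(x,y)\,dx\Bigr)dy .
\end{equation*}
The substitution $x=\varepsilon\xi+\varepsilon z$, $z\in Y$, with $dx=\varepsilon^{N}dz$, turns this into
\begin{equation*}
\varepsilon^{N}\int_Y\int_Y u(\varepsilon\xi+\varepsilon y)\,v(\varepsilon\xi+\varepsilon z,y)\,dz\,dy .
\end{equation*}

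\textbf{Step 3: the right-hand side on one cell.} Here $\{x/\varepsilon\}_Y=x/\varepsilon-\xi$. Substituting $x=\varepsilon\xi+\varepsilon y$ gives $\{x/\varepsilon\}_Y=y$, so the right side equals
\begin{equation*}
\varepsilon^{N}\int_Y u(\varepsilon\xi+\varepsilon y)\int_Y v(\varepsilon\xi+\varepsilon z,y)\,dz\,dy .
\end{equation*}
This is the same expression as in Step 2. Summing over $\xi\in\Xi_{\varepsilon}$ then gives the claim.

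\textbf{Main obstacle: justifying Fubini and the changes of variables.} The algebra above is routine; the real work is checking integrability, so that interchanging integrals and substituting are legitimate. I would first verify that $(y,z)\mapsto u(\varepsilon\xi+\varepsilon y)\,v(\varepsilon\xi+\varepsilon z,y)$ is in $L^1(Y\times Y)$. The affine map $(y,z)\mapsto(\varepsilon\xi+\varepsilon z,y)$ sends $Y\times Y$ onto $\varepsilon(\xi+Y)\times Y$, so $v$ composed with it lies in $L^2(Y\times Y)$, with norm controlled by $\varepsilon^{-N/2}\|v\|_{L^2(\Omega\times Y)}$. Likewise $u(\varepsilon\xi+\varepsilon\,\cdot)\in L^2(Y)$, and viewed as a function of $(y,z)$ it is in $L^2(Y\times Y)$. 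Cauchy--Schwarz then gives integrability.

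Measurability of $\mathcal{U}_{\varepsilon}(v)$ follows from the same composition-with-affine-maps observation together with Fubini--Tonelli. Its square integrability follows from Jensen's inequality in $z$, and this also shows that $\mathcal{U}_{\varepsilon}$ maps $L^2(\Omega\times Y)$ into $L^2(\Omega)$, so the right-hand side is well defined. For the left-hand side, Proposition~\ref{Prop5} gives $\mathcal{T}_{\varepsilon}(u)\in L^2(\Omega\times Y)$, which is enough.
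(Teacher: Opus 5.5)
Your proof is correct and follows essentially the same route as the argument the paper relies on; the paper itself only cites Subsection~2.2 of \cite{CiDaGr1}, where $\mathcal{U}_\varepsilon$ is obtained by this cell-by-cell computation: both sides vanish over $\Lambda_\varepsilon$, and on each cell $\varepsilon(\xi+Y)$, $\xi\in\Xi_\varepsilon$, the affine substitution $x=\varepsilon\xi+\varepsilon z$ turns both sides into the same double integral over $Y\times Y$. Your per-cell Cauchy--Schwarz bound is exactly what is needed to justify the uses of Fubini.
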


\begin{proof}
See Subsection 2.2 in \cite{CiDaGr1}.
\end{proof}

The next result plays a similar role for unfolding as (\ref%
{BoundL2(omega,Cper(Y))}) and (\ref{Testf L2}) do for two-scale convergence.

\begin{proposition}
\label{Prop217}$\phantom{}$
The following assertions hold.
\begin{enumerate}
\item[(i)] The averaging operator $\mathcal{U}_{\varepsilon }$ is linear and
continuous from $L^{2}(\Omega \times Y)$ to $L^{2}(\Omega )$ and, for every
$v\in L^{2}(\Omega \times Y)$,%
\begin{equation*}
\left\Vert \mathcal{U}_{\varepsilon }\left( v\right) \right\Vert
_{L^{2}(\Omega )}\leq \left\Vert v\right\Vert _{L^{2}(\Omega \times Y)}\text{%
.}
\end{equation*}

\item[(ii)] For every $v\in L^{2}(\Omega \times Y)$,%
\begin{equation*}
\mathcal{T}_{\varepsilon_h }\left( \mathcal{U}_{\varepsilon_h }\left( v\right)
\right) \left( x,y\right) \rightarrow v\left( x,y\right) \text{ in }%
L^{2}(\Omega \times Y)\text{.}
\end{equation*}

\item[(iii)] For any $v\in L^{2}(\Omega \times Y)$ it holds that%
\begin{equation*}
\lim_{h\rightarrow \infty }\left\Vert \mathcal{U}_{\varepsilon _{h}}\left(
v\right) \right\Vert _{L^{2}(\Omega )}=\left\Vert v\right\Vert
_{L^{2}(\Omega \times Y)}\text{.}
\end{equation*}
\end{enumerate}
\end{proposition}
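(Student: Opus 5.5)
Part (i) follows from the adjoint relation in Proposition~\ref{Prop: adjoint} together with Proposition~\ref{Prop5}. Linearity of $\mathcal{U}_{\varepsilon}$ is immediate from the definition. For $v\in L^{2}(\Omega\times Y)$ we write
\begin{equation*}
\left\Vert \mathcal{U}_{\varepsilon}(v)\right\Vert_{L^{2}(\Omega)}=\sup_{\left\Vert u\right\Vert_{L^2(\Omega)}\le 1}\int_\Omega u\,\mathcal{U}_\varepsilon(v)\,dx=\sup_{\left\Vert u\right\Vert_{L^2(\Omega)}\le 1}\int_\Omega\int_Y \mathcal{T}_\varepsilon(u)\,v\,dy\,dx .
\end{equation*}
By Cauchy--Schwarz and (\ref{unfolding_linj_kont}), the right-hand side is at most $\left\Vert v\right\Vert_{L^2(\Omega\times Y)}$. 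This gives both continuity and the bound. Since the adjoint has norm equal to that of $\mathcal{T}_\varepsilon$, no direct computation is needed. Alternatively, one can check it directly with Jensen on each cell $\varepsilon(\xi+Y)$, using the change of variables $x=\varepsilon\xi+\varepsilon y$.

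For (ii), I would compute $\mathcal{T}_\varepsilon(\mathcal{U}_\varepsilon(v))$ explicitly. For $x\in\widehat\Omega_\varepsilon$ we have $\left[\left(\varepsilon[x/\varepsilon]_Y+\varepsilon y\right)/\varepsilon\right]_Y=[x/\varepsilon]_Y$ and $\{\cdot\}_Y=y$. Hence
\begin{equation*}
\mathcal{T}_\varepsilon(\mathcal{U}_\varepsilon(v))(x,y)=\int_Y v\left(\varepsilon\left[\tfrac{x}{\varepsilon}\right]_Y+\varepsilon z,y\right)dz ,
\end{equation*}
and the expression is zero on $\Lambda_\varepsilon\times Y$. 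This is the cellwise average in $x$ of $v(\cdot,y)$ over the $\varepsilon$-cell containing $x$, i.e.\ a piecewise-constant projection $P_\varepsilon$ in the first variable, which is a contraction on $L^2(\Omega\times Y)$. The convergence $P_{\varepsilon_h}v\to v$ I would prove by density. First take $v\in C_c(\Omega\times \overline{Y})$, or more simply $v$ continuous with compact support in $x$. By uniform continuity, $\sup|P_\varepsilon v-v|\to0$ on $\widehat\Omega_\varepsilon\times Y$. On $\Lambda_\varepsilon\times Y$ the support of $v$ is avoided once $\varepsilon\sqrt N<\operatorname{dist}(\operatorname{supp}v,\partial\Omega)$, so the error there vanishes. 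Then, for general $v$, pick $w$ such functions close to $v$ and use
\begin{equation*}
\left\Vert P_\varepsilon v-v\right\Vert\le \left\Vert P_\varepsilon(v-w)\right\Vert+\left\Vert P_\varepsilon w-w\right\Vert+\left\Vert w-v\right\Vert\le 2\left\Vert v-w\right\Vert+o(1),
\end{equation*}
where the bound $\left\Vert P_\varepsilon\right\Vert\le1$ comes from (i) combined with Proposition~\ref{Prop5}. The main care point is this density argument and verifying the identity for the composition. I expect it to be the chief (though mild) obstacle, in particular the handling of $\Lambda_\varepsilon$, which is resolved by compact support.

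For (iii), the upper bound $\limsup\le\left\Vert v\right\Vert$ is (i). For the lower bound, Proposition~\ref{Prop5} gives $\left\Vert \mathcal{U}_{\varepsilon_h}(v)\right\Vert_{L^2(\Omega)}\ge\left\Vert \mathcal{T}_{\varepsilon_h}(\mathcal{U}_{\varepsilon_h}(v))\right\Vert_{L^2(\Omega\times Y)}$, and the right-hand side tends to $\left\Vert v\right\Vert_{L^2(\Omega\times Y)}$ by (ii). Hence $\liminf\ge\left\Vert v\right\Vert$, and the limit exists and equals $\left\Vert v\right\Vert_{L^2(\Omega\times Y)}$.
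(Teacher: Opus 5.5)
Your proof is correct, and it takes a different route from the paper's. The paper does not prove (i) and (ii) itself; it cites Proposition~2.17 of \cite{CiDaGr1} and Proposition~1.29~(i) of \cite{CiDaGr3}. You make both self-contained.

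\begin{itemize}
\item For (i), you use duality with Propositions~\ref{Prop5} and~\ref{Prop: adjoint}, giving $\|\mathcal{U}_\varepsilon\| = \|\mathcal{T}_\varepsilon\| \le 1$. Your Jensen-on-each-cell alternative also settles, as a by-product, that $\mathcal{U}_\varepsilon(v)$ is in $L^2(\Omega)$ at all.
\item For (ii), you identify $\mathcal{T}_\varepsilon\mathcal{U}_\varepsilon$ as the cellwise average in $x$, vanishing on $\Lambda_\varepsilon \times Y$, and then argue by density with compactly supported continuous functions. The computation of the composition, the handling of $\Lambda_\varepsilon$ via compact support, and the contraction bound all check out.
\end{itemize}

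For (iii), the two arguments differ genuinely. The paper applies the adjoint relation with $u=\mathcal{U}_{\varepsilon_h}(v)$ to get the exact identity $\|\mathcal{U}_{\varepsilon_h}(v)\|^2_{L^2(\Omega)} = \int_\Omega\int_Y \mathcal{T}_{\varepsilon_h}(\mathcal{U}_{\varepsilon_h}(v))\, v \,dy\,dx$. It then needs only weak convergence in (ii), and neither (i) nor Proposition~\ref{Prop5}. You instead sandwich the norm between the bound from (i) and $\|\mathcal{T}_{\varepsilon_h}(\mathcal{U}_{\varepsilon_h}(v))\|_{L^2(\Omega\times Y)}$. That uses the full strong convergence in (ii), which you have.

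Your lower bound is in fact an equality, so the upper-bound half of your sandwich is superfluous. Since $\mathcal{U}_\varepsilon(v)$ vanishes on $\Lambda_\varepsilon$, the remark after Proposition~\ref{Prop5} gives $\|\mathcal{T}_\varepsilon(\mathcal{U}_\varepsilon(v))\|_{L^2(\Omega\times Y)} = \|\mathcal{U}_\varepsilon(v)\|_{L^2(\Omega)}$. Hence (iii) follows at once from (ii). Equivalently, your $P_\varepsilon$ is an orthogonal projection, so the paper's identity reads $\|\mathcal{U}_\varepsilon(v)\|^2 = \langle P_\varepsilon v, v\rangle = \|P_\varepsilon v\|^2$.

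So the paper's route is the more economical given (ii) as a black box. Yours has the advantage of exposing the structure, namely that $\mathcal{T}_\varepsilon\mathcal{U}_\varepsilon$ is a piecewise-constant projection in $x$, which makes all three assertions transparent.
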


\begin{proof}

(i) See Proposition 2.17 in \cite{CiDaGr1}.

(ii) See Proposition 1.29 (i) in \cite{CiDaGr3}.

(iii) From Proposition~\ref{Prop: adjoint} we have%
\begin{align*}
    \lim_{h\rightarrow \infty }\left\Vert \mathcal{U}_{\varepsilon _{h}}\left(
    v\right) \right\Vert _{L^{2}(\Omega )}^{2}
    &= \lim_{h\rightarrow \infty
    }\int_{\Omega }\left( \mathcal{U}_{\varepsilon _{h}}\left( v\right)(x) \right)
    ^{2}dx \\
    &= \lim_{h\rightarrow \infty }\int_{\Omega }\int_{Y}\mathcal{T}_{\varepsilon
    _{h}}\left( \mathcal{U}_{\varepsilon _{h}}\left( v\right) \right) \left(
    x,y\right) v\left( x,y\right) dydx\text{.}
\end{align*}%
Assertion $\left( ii\right) $ implies that $\mathcal{T}_{\varepsilon
_{h}}\left( \mathcal{U}_{\varepsilon _{h}}\left( v\right) \right) $
converges weakly in $L^{2}(\Omega \times Y)$ which in turn implies%
\begin{gather*}
\lim_{h\rightarrow \infty }\int_{\Omega }\int_{Y}\mathcal{T}_{\varepsilon
_{h}} \! \left( \mathcal{U}_{\varepsilon _{h}} \! \left( v\right) \right) \! \left(
x,y\right) v \! \left( x,y\right) dydx 
=\int_{\Omega }\int_{Y}\left( v \! \left( x,y\right) \right) ^{2}dydx=\left\Vert
v\right\Vert _{L^{2}(\Omega \times Y)}^{2}
\end{gather*}%
and the statement follows.
\end{proof}

We have the following relation between the weak $L^{2}(\Omega )$-limit of
the averaging operator and the weak $L^{2}(\Omega \times Y)$-limit of the
sequence.

\begin{proposition}
\label{Prop218}Let $\left\{ u_{\varepsilon _{h}}\right\} $ be a sequence
which converges weakly to $u$ in $L^{2}(\Omega \times Y)$. Then%
\begin{equation*}
\mathcal{U}_{\varepsilon _{h}}\left( u_{\varepsilon _{h}}\right) \left(
x\right) \rightharpoonup \int\nolimits_{Y}u\left( x,y\right) dy\text{ in }%
L^{2}(\Omega )\text{.}
\end{equation*}
\end{proposition}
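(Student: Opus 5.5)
Since the averaging operator is the adjoint of the unfolding operator, the natural route is duality. To prove weak convergence of $\mathcal{U}_{\varepsilon_h}(u_{\varepsilon_h})$ in $L^2(\Omega)$, we fix an arbitrary test function $\varphi\in L^2(\Omega)$. By Proposition~\ref{Prop: adjoint}, applied with $u=\varphi$ and $v=u_{\varepsilon_h}$,
\begin{equation*}
\int_\Omega \mathcal{U}_{\varepsilon_h}(u_{\varepsilon_h})(x)\varphi(x)\,dx=\int_\Omega\int_Y \mathcal{T}_{\varepsilon_h}(\varphi)(x,y)\,u_{\varepsilon_h}(x,y)\,dy\,dx .
\end{equation*}
The sequence $\varphi$ is constant, so it trivially converges strongly to $\varphi$ in $L^2(\Omega)$. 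The theorem on strong convergence (Proposition 2.9 (ii) in \cite{CiDaGr1}, stated above) then gives $\mathcal{T}_{\varepsilon_h}(\varphi)\to\varphi$ strongly in $L^2(\Omega\times Y)$, where $\varphi$ is regarded as a function independent of $y$.

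The right-hand side is therefore a pairing of a strongly convergent sequence with a weakly convergent one, namely $u_{\varepsilon_h}\rightharpoonup u$ in $L^2(\Omega\times Y)$. Weakly convergent sequences are bounded by the uniform boundedness principle, so the standard splitting applies:
\begin{equation*}
\int\!\!\int \mathcal{T}_{\varepsilon_h}(\varphi)u_{\varepsilon_h}-\int\!\!\int \varphi u =\int\!\!\int (\mathcal{T}_{\varepsilon_h}(\varphi)-\varphi)u_{\varepsilon_h}+\int\!\!\int \varphi(u_{\varepsilon_h}-u).
\end{equation*}
The first term is bounded by $\|\mathcal{T}_{\varepsilon_h}(\varphi)-\varphi\|_{L^2(\Omega\times Y)}\sup_h\|u_{\varepsilon_h}\|_{L^2(\Omega\times Y)}\to0$. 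The second term tends to zero by weak convergence, since $\varphi\in L^2(\Omega\times Y)$. Hence the limit equals $\int_\Omega\int_Y\varphi(x)u(x,y)\,dy\,dx$, and by Fubini this is $\int_\Omega \varphi(x)\left(\int_Y u(x,y)\,dy\right)dx$.

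Because $\varphi$ was arbitrary, this is exactly the asserted weak convergence to $\int_Y u(\cdot,y)\,dy$, which lies in $L^2(\Omega)$ by Cauchy--Schwarz since $|Y|=1$. The only step carrying real content is the strong convergence $\mathcal{T}_{\varepsilon_h}(\varphi)\to\varphi$. It is supplied by the cited result, so the main point needing care is the boundedness of $\{u_{\varepsilon_h}\}$, which is what allows a merely weakly convergent sequence to be paired with a strongly convergent one.
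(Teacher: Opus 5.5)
Your proof is correct: the duality identity of Proposition~\ref{Prop: adjoint}, the strong convergence $\mathcal{T}_{\varepsilon_h}(\varphi)\to\varphi$ in $L^2(\Omega\times Y)$, and the boundedness of the weakly convergent sequence $\{u_{\varepsilon_h}\}$ are exactly what is needed to pass to the limit in the strong--weak pairing. The paper gives no proof of its own and only cites Proposition~2.18~(i) in \cite{CiDaGr1}; as far as I recall, the proof there is this same duality argument (adjointness together with strong convergence of the unfolded test function).
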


\begin{proof}
See Proposition 2.18 (i) in \cite{CiDaGr1}.
\end{proof}

We end this section by giving some results which we discuss further in
Section~\ref{Unification}. First,\ we introduce the local average operator according to Definition~1.21 in \cite{CiDaGr3}.
\begin{definition}
    The local average operator $\mathcal{M}_\varepsilon : L^2(\Omega) \to L^2(\Omega)$ is defined as
    \begin{equation*}
    \mathcal{M}_{\varepsilon} \left( u \right) \left( x \right) = \left\{
    \begin{aligned}
        &\int_Y u \left( \varepsilon \left[ \frac{x}{\varepsilon} \right]_Y + \varepsilon z \right) dz 
        && \text{for $x \in \widehat{\Omega}_{\varepsilon}$,} \\
        &0 
        && \text{for $x \in \Lambda_{\varepsilon}$.}
    \end{aligned} \right.
\end{equation*}
\end{definition}

The local average operator $\mathcal{M}_{\varepsilon }$ is illustrated in Figure~\ref{Figur: local average operator}.
\begin{figure}[H]
    \centering
    \includegraphics[width=0.4\linewidth]{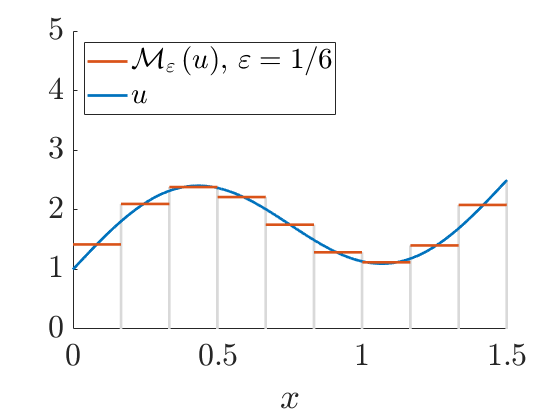}
    \includegraphics[width=0.4\linewidth]{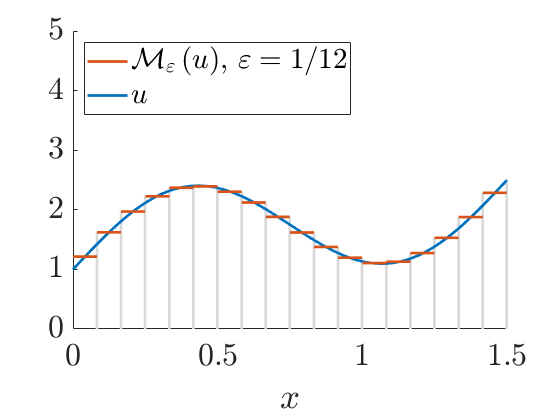}
    \caption{A function $u$ and $\mathcal{M}_{\varepsilon} \left( u \right)$ for two different $\varepsilon$ values.}\label{Figur: local average operator}
\end{figure}
\begin{remark}
    The local average operator is related to the unfolding operator by
    \begin{equation*}
        \mathcal{M}_{\varepsilon }\left( u\right) \left( x\right) =\int\nolimits_{Y} \mathcal{T}_{\varepsilon }\left( u\right) \left( x,y\right) dy \textit{.}
    \end{equation*}
    See, e.g., Remark~2.23 in \cite{CiDaGr1}.
\end{remark}
\begin{remark}
\label{rem: adjoint lika med local average}
    For $v \in L^2(\Omega) \subset L^2(\Omega \times Y)$, i.e., $v$ depending only on $x$, not $y$,
    \begin{equation*}
        \mathcal{U}_\varepsilon \left( v \right) \left( x \right) = \mathcal{M}_\varepsilon \left( u \right) \left( x \right) \text{.}
    \end{equation*}
    See, e.g., Remark~2.24 in \cite{CiDaGr1}.
\end{remark}

We have the following result.
\begin{theorem}
\label{Thunfoldampli}Let $\left\{ u_{\varepsilon _{h}}\right\} $ be a
bounded sequence in $H^{1}(\Omega )$. Then, up to a subsequence,%
\begin{equation}
\label{vw unfolding}
\varepsilon _{h}^{-1}\left( \mathcal{T}_{\varepsilon _{h}}\left(
u_{\varepsilon _{h}}\right) \left( x,y\right) -\mathcal{M}_{\varepsilon
_{h}}\left( u_{\varepsilon _{h}}\right) \left( x\right) \right)
\rightharpoonup u_{1}\left( x,y\right) +y^{c}\cdot \nabla u\left( x\right)
\end{equation}%
in $L^{2}(\Omega ;H^{1}(Y))$, where $u\in H^{1}(\Omega )$, $u_{1}\in
L^{2}(\Omega ;H_{\sharp }^{1}(Y)/\mathbb{R})$ and \newline
$y^{c}=\left( y_{1}-1/2,y_{2}-1/2,...,y_{N}-1/2\right) $.
\end{theorem}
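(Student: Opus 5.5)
Set $w_h(x,y) = \varepsilon_h^{-1}\bigl(\mathcal{T}_{\varepsilon_h}(u_{\varepsilon_h})(x,y) - \mathcal{M}_{\varepsilon_h}(u_{\varepsilon_h})(x)\bigr)$. The plan is the standard unfolding argument in the spirit of \cite{CiDaGr1}: first bound $w_h$ in $L^2(\Omega;H^1(Y))$, then extract a weakly convergent subsequence, and finally identify the limit through its $y$-gradient and its $y$-mean.

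\textbf{Step 1 (gradient bound).} For $x\in\widehat{\Omega}_{\varepsilon_h}$ the chain rule gives
\begin{equation*}
\nabla_y \mathcal{T}_{\varepsilon_h}(u_{\varepsilon_h})(x,y)=\varepsilon_h\mathcal{T}_{\varepsilon_h}(\nabla u_{\varepsilon_h})(x,y).
\end{equation*}
Since $\mathcal{M}_{\varepsilon_h}(u_{\varepsilon_h})$ does not depend on $y$, this yields $\nabla_y w_h=\mathcal{T}_{\varepsilon_h}(\nabla u_{\varepsilon_h})$. By Proposition~\ref{Prop5}, $\nabla_y w_h$ is bounded in $L^2(\Omega\times Y)$. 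On $\Lambda_{\varepsilon_h}$ both terms vanish, so nothing needs to be checked there.

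\textbf{Step 2 (mean bound).} By the Remark relating $\mathcal{M}_\varepsilon$ to $\mathcal{T}_\varepsilon$, $\int_Y w_h(x,y)\,dy=0$. The Poincaré--Wirtinger inequality on $Y$ then gives $\|w_h(x,\cdot)\|_{L^2(Y)}\le C\|\nabla_y w_h(x,\cdot)\|_{L^2(Y)}$. Hence $\{w_h\}$ is bounded in $L^2(\Omega;H^1(Y))$, and up to a subsequence $w_h\rightharpoonup \widehat{w}$ in $L^2(\Omega;H^1(Y))$ with $\int_Y\widehat{w}\,dy=0$. Passing to a further subsequence, $u_{\varepsilon_h}\rightharpoonup u$ in $H^1(\Omega)$ and $u_{\varepsilon_h}\to u$ in $L^2(\Omega)$. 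Moreover $\mathcal{T}_{\varepsilon_h}(\nabla u_{\varepsilon_h})\rightharpoonup \nabla u+\nabla_y u_1$ for some $u_1\in L^2(\Omega;H^1_\sharp(Y)/\mathbb{R})$; this is the standard gradient compactness in unfolding (Theorem~3.5 in \cite{CiDaGr1}), which I would quote. It follows that $\nabla_y\widehat{w}=\nabla u+\nabla_y u_1=\nabla_y(y^c\cdot\nabla u+u_1)$.

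\textbf{Step 3 (identification).} Two functions with the same $y$-gradient differ by a function of $x$ alone. Since $\int_Y y^c\,dy=0$ and $u_1$ may be normalized to have zero mean, the zero-mean condition forces $\widehat{w}=u_1+y^c\cdot\nabla u$, which is (\ref{vw unfolding}).

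\textbf{Main obstacle.} The delicate step is the compactness result for $\mathcal{T}_{\varepsilon_h}(\nabla u_{\varepsilon_h})$, specifically showing that the $y$-periodic corrector $u_1$ exists. If I had to prove it rather than cite it, I would argue as follows. Consider
\begin{equation*}
\widehat{w}-y^c\cdot\nabla u=\lim_h \varepsilon_h^{-1}\bigl(\mathcal{T}_{\varepsilon_h}(u_{\varepsilon_h})-\mathcal{M}_{\varepsilon_h}(u_{\varepsilon_h})\bigr)-y^c\cdot\nabla u.
\end{equation*}
To show it is $Y$-periodic, compare traces on opposite faces $y_i=0$ and $y_i=1$. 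The unfolding at face $1$ of a cell equals the unfolding at face $0$ of the cell shifted by $\varepsilon_h e_i$. After dividing by $\varepsilon_h$, this difference is a difference quotient in $x$, which converges weakly in $H^{-1}$ to $\partial_{x_i}u$, matching the jump of $y^c\cdot\nabla u$ across the faces. Handling this with test functions in $D(\Omega)$, so as to avoid the boundary layer $\Lambda_{\varepsilon_h}$, is the technical core of the argument.
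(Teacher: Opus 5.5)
Your Steps~1--3 are correct, but the citation creates a problem. The paper's own proof of this statement is nothing more than a reference to Theorem~3.5 in \cite{CiDaGr1}. The gradient convergence $\mathcal{T}_{\varepsilon_h}(\nabla u_{\varepsilon_h})\rightharpoonup\nabla u+\nabla_y u_1$ that you quote is part of that same theorem. Moreover, in \cite{CiDaGr1} the logic runs opposite to yours:
\begin{enumerate}
\item $w_h$ is bounded exactly as in your Steps~1--2, and a weak limit $\widehat{w}$ is extracted;
\item $\widehat{w}-y^c\cdot\nabla u$ is shown to be $Y$-periodic by the face-translation argument of your last paragraph;
\item only then is the gradient convergence read off by applying $\nabla_y$.
\end{enumerate}
So, as written, your main line rests on a result whose standard proof passes through the very statement you are proving.

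The circle is easy to break within this paper. The classical result (\ref{Gradient two-scale}) of Nguetseng and Allaire, combined with Proposition~\ref{Pr ekvivalens} applied componentwise to the $L^2(\Omega)$-bounded sequence $\{\nabla u_{\varepsilon_h}\}$, gives (\ref{Gradient unfolding}) without Theorem~3.5. Your Step~3 then finishes the proof.

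With that sourcing, your argument is a genuinely different route from \cite{CiDaGr1}. The periodicity of the corrector is imported from two-scale convergence, where it comes from duality with $y$-divergence-free periodic fields, rather than proved via traces on opposite cell faces. The identification then reduces to uniqueness of zero-mean $y$-primitives, and the proof illustrates the paper's theme of transferring results between two-scale convergence and unfolding. The route of \cite{CiDaGr1}, by contrast, is self-contained within unfolding and produces (\ref{Gradient unfolding}) as a by-product.

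If you take that direct route, your sketch is right, but it needs one precise ingredient in place of the ``weakly in $H^{-1}$'' phrasing. Move the difference quotient onto a test function $\psi$ compactly supported in $x\in\Omega$. Then use $\mathcal{T}_{\varepsilon_h}(u_{\varepsilon_h})\rightharpoonup u$ in $L^2(\Omega;H^1(Y))$ (Corollary~3.3 in \cite{CiDaGr1}): the face traces converge weakly to $u$, while $\varepsilon_h^{-1}\bigl(\psi(x-\varepsilon_h e_i,\cdot)-\psi(x,\cdot)\bigr)\to-\partial_{x_i}\psi$ uniformly. No strong convergence is required.
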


\begin{proof}
See Theorem 3.5 in \cite{CiDaGr1}.
\end{proof}

For strongly convergent sequences we have the following result.

\begin{proposition}
\label{Prop225}Let $\left\{ u_{\varepsilon _{h}}\right\} $ be a sequence which converges strongly to $u$ in $L^{2}(\Omega )$. Then%
\begin{equation*}
\mathcal{M}_{\varepsilon _{h}}\left( u_{\varepsilon _{h}}\right) \left(
x\right) \rightarrow u\left( x\right) \text{ in }L^{2}(\Omega )\text{.}
\end{equation*}
\end{proposition}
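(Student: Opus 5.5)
The plan is to reduce the claim to the already established strong convergence of the unfolded sequence, using the identity
\begin{equation*}
\mathcal{M}_{\varepsilon}(u)(x)=\int_Y \mathcal{T}_{\varepsilon}(u)(x,y)\,dy
\end{equation*}
from the remark above. By the theorem analogous to Theorem~\ref{Thtvaskalastark} (Proposition~2.9~(ii) in \cite{CiDaGr1}), strong convergence $u_{\varepsilon_h}\to u$ in $L^2(\Omega)$ gives $\mathcal{T}_{\varepsilon_h}(u_{\varepsilon_h})\to u$ in $L^2(\Omega\times Y)$. Here $u$ is regarded as a function on $\Omega\times Y$ that is constant in $y$.

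Next I use that integration over $Y$ is a contraction from $L^2(\Omega\times Y)$ to $L^2(\Omega)$. For $w\in L^2(\Omega\times Y)$, Jensen's (or the Cauchy--Schwarz) inequality with $|Y|=1$ gives
\begin{equation*}
\Big\Vert \int_Y w(\cdot,y)\,dy\Big\Vert_{L^2(\Omega)}\le \Vert w\Vert_{L^2(\Omega\times Y)}.
\end{equation*}
I apply this with $w=\mathcal{T}_{\varepsilon_h}(u_{\varepsilon_h})-u$. Since $\int_Y u(x)\,dy=u(x)$, this yields
\begin{equation*}
\Vert \mathcal{M}_{\varepsilon_h}(u_{\varepsilon_h})-u\Vert_{L^2(\Omega)}\le \Vert \mathcal{T}_{\varepsilon_h}(u_{\varepsilon_h})-u\Vert_{L^2(\Omega\times Y)}\to 0,
\end{equation*}
which is the claim.

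The only point needing care is the region $\Lambda_{\varepsilon_h}$. There both $\mathcal{M}_{\varepsilon_h}$ and $\mathcal{T}_{\varepsilon_h}$ vanish by definition, so the identity $\mathcal{M}_\varepsilon(u)=\int_Y\mathcal{T}_\varepsilon(u)\,dy$ holds on all of $\Omega$ and no separate estimate is needed. The strong convergence of $\mathcal{T}_{\varepsilon_h}(u_{\varepsilon_h})$ already absorbs the boundary layer, because $|\Lambda_{\varepsilon_h}|\to0$.

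If one wanted a self-contained argument instead of citing the unfolding result, the main step would be proving that result itself. I would split $\mathcal{T}_{\varepsilon_h}(u_{\varepsilon_h})-u=\mathcal{T}_{\varepsilon_h}(u_{\varepsilon_h}-u)+(\mathcal{T}_{\varepsilon_h}(u)-u)$. The first term tends to zero by Proposition~\ref{Prop5}. For the second term I would approximate $u$ by a continuous function, using uniform continuity on the $\varepsilon$-cells together with $|\Lambda_{\varepsilon_h}|\to0$.
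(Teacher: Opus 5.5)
Your proof is correct and complete. The paper gives no argument of its own beyond citing Proposition~2.25~(iii) in \cite{CiDaGr1}, and your reduction is exactly the standard derivation from the tools the paper states just before the proposition: you write $\mathcal{M}_{\varepsilon_h}(u_{\varepsilon_h})=\int_Y\mathcal{T}_{\varepsilon_h}(u_{\varepsilon_h})(\cdot,y)\,dy$ (valid on all of $\Omega$, since both operators vanish on $\Lambda_{\varepsilon_h}$), use the strong convergence $\mathcal{T}_{\varepsilon_h}(u_{\varepsilon_h})\to u$ in $L^2(\Omega\times Y)$, and note that averaging over $Y$ with $|Y|=1$ is an $L^2$-contraction.
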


\begin{proof}
See Proposition 2.25 (iii) in \cite{CiDaGr1}.
\end{proof}

\begin{remark}
\label{Rem: M0 ger ueps0}If $\left\{u_{\varepsilon_h}\right\}$ converges strongly to $u$ in $L^2(\Omega)$ and  $\mathcal{M}_{\varepsilon _{h}}\left(
u_{\varepsilon _{h}}\right) =0$, Proposition~\ref{Prop225} implies that $u=0$.
\end{remark}

Finally, we give some convergence results for the unfolded sequence in\linebreak
$L^{2}(\Omega ;H^{1}(Y))$.

\begin{theorem}
Let $\left\{ u_{\varepsilon
_{h}}\right\} $ be a sequence which converges weakly to $u$ in $H^{1}(\Omega
)$. Then%
\begin{equation*}
\mathcal{T}_{\varepsilon _{h}}\left( u_{\varepsilon _{h}}\right) \left(
x,y\right) \rightharpoonup u\left( x\right) \text{ in }L^{2}(\Omega
;H^{1}(Y))\text{.}
\end{equation*}%
Furthermore, if $\left\{ u_{\varepsilon _{h}}\right\} $ converges strongly
in $L^{2}(\Omega )$ then%
\begin{equation*}
\mathcal{T}_{\varepsilon _{h}}\left( u_{\varepsilon _{h}}\right) \left(
x,y\right) \rightarrow u\left( x\right) \text{ in }L^{2}(\Omega ;H^{1}(Y))%
\text{.}
\end{equation*}
\end{theorem}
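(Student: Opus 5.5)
The plan is to treat the two components of the $L^{2}(\Omega ;H^{1}(Y))$-norm separately: the function values $\mathcal{T}_{\varepsilon _{h}}(u_{\varepsilon _{h}})$ and their $y$-gradients. The basic tool is the chain rule for the unfolded function. On $\widehat{\Omega }_{\varepsilon }\times Y$ the map $y\mapsto u(\varepsilon [x/\varepsilon ]_{Y}+\varepsilon y)$ is a dilation of $u$, so for $u\in H^{1}(\Omega )$ we get
\begin{equation*}
\nabla _{y}\mathcal{T}_{\varepsilon }(u)(x,y)=\varepsilon \mathcal{T}_{\varepsilon }(\nabla u)(x,y)\text{.}
\end{equation*}
On $\Lambda _{\varepsilon }\times Y$ both sides vanish. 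This gives $\mathcal{T}_{\varepsilon }(u)\in L^{2}(\Omega ;H^{1}(Y))$, which I verify cell by cell, and by Proposition~\ref{Prop5}
\begin{equation*}
\left\Vert \nabla _{y}\mathcal{T}_{\varepsilon _{h}}(u_{\varepsilon _{h}})\right\Vert _{L^{2}(\Omega \times Y)}\leq \varepsilon _{h}\left\Vert \nabla u_{\varepsilon _{h}}\right\Vert _{L^{2}(\Omega )}\text{.}
\end{equation*}
Since $\{u_{\varepsilon _{h}}\}$ is bounded in $H^{1}(\Omega )$ (it converges weakly there), the $y$-gradients converge to $0$ strongly in $L^{2}(\Omega \times Y)$. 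This holds in both parts of the statement.

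\textbf{Strong part.} Suppose $u_{\varepsilon _{h}}\rightarrow u$ strongly in $L^{2}(\Omega )$. The theorem stated just after Proposition~\ref{Prop: unfolding gives weak} gives $\mathcal{T}_{\varepsilon _{h}}(u_{\varepsilon _{h}})\rightarrow u$ in $L^{2}(\Omega \times Y)$. The limit $u(x)$ has zero $y$-gradient, and we just showed the $y$-gradients tend to $0$ strongly. Together these give strong convergence in $L^{2}(\Omega ;H^{1}(Y))$.

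\textbf{Weak part.} Weak convergence in $H^{1}(\Omega )$ on the bounded smooth domain $\Omega $ implies strong convergence in $L^{2}(\Omega )$ by the Rellich--Kondrachov theorem. So the whole sequence converges strongly in $L^{2}(\Omega )$, and the second assertion already yields strong, hence weak, convergence in $L^{2}(\Omega ;H^{1}(Y))$. If one prefers not to use compactness, the alternative route is as follows. Boundedness in $L^{2}(\Omega ;H^{1}(Y))$ follows from Proposition~\ref{Prop5} and the gradient bound above. Any weak limit $w$ of a subsequence has $\nabla _{y}w=0$. Its $L^{2}(\Omega \times Y)$-part is the weak limit of the unfolded sequence, which integrates in $y$ to $u$ by Proposition~\ref{Prop: unfolding gives weak}. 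Since $w$ is independent of $y$, this forces $w=u$, and uniqueness of the limit gives convergence of the whole sequence.

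\textbf{Main obstacle.} The only delicate point is justifying the chain rule on each cell and the measurability of $\mathcal{T}_{\varepsilon }(u)$ as an $H^{1}(Y)$-valued function. I would first prove both for smooth $u$, then pass to general $u$ by density of $C^{\infty }(\overline{\Omega })$ in $H^{1}(\Omega )$, using the continuity of $\mathcal{T}_{\varepsilon }$ from Proposition~\ref{Prop5}.
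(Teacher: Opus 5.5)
Your proof is correct. The paper gives no argument of its own here, only a pointer to Corollary~3.3 in \cite{CiDaGr1}. The standard argument behind that citation uses the same identity you use, $\nabla_y\mathcal{T}_\varepsilon(u)=\varepsilon\,\mathcal{T}_\varepsilon(\nabla u)$, to get $\nabla_y\mathcal{T}_{\varepsilon_h}(u_{\varepsilon_h})\to 0$ strongly. It then treats the weak assertion without compactness, essentially as in your ``alternative route'': the unfolded sequence is bounded in $L^2(\Omega;H^1(Y))$, every weak cluster point is independent of $y$, and its $Y$-average is $u$ by Proposition~\ref{Prop: unfolding gives weak}.

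Your primary route for the weak part is genuinely different. By Rellich--Kondrachov, the extra hypothesis of the second assertion holds automatically under the paper's standing assumptions. So the first assertion is not only implied by the second but upgraded to strong convergence in $L^2(\Omega;H^1(Y))$. This is shorter and yields more, but it depends on $\Omega$ being bounded with regular boundary. The compactness-free argument is what survives on domains where $H^1(\Omega)\hookrightarrow L^2(\Omega)$ is not compact (e.g.\ unbounded $\Omega$). That is exactly the situation in which the first assertion carries content of its own, and presumably why the source states the two parts separately.

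Finally, the ``main obstacle'' you flag is milder than you suggest. $\mathcal{T}_\varepsilon(u)$ is constant in $x$ on each of the finitely many cells $\varepsilon(\xi+Y)$, $\xi\in\Xi_\varepsilon$, and vanishes on $\Lambda_\varepsilon$, so it is a simple $H^1(Y)$-valued function. The chain rule on each cell is just an affine change of variables for an $H^1$ function, so no density argument is needed.
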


\begin{proof}
See Corollary 3.3 in \cite{CiDaGr1}.
\end{proof}

\section{General two-scale convergence\label{Operator governed}}

This section is devoted to general two-scale convergence where the limit process depends on so-called two-scale operators. This type of generalization of two-scale convergence was introduced by Holmbom in 1996 in \cite{HOL1}; see also \cite{HSSW}, \cite{HOS} and \cite{Sil}. We give the following definitions.

\begin{definition}
A linear operator%
\begin{equation*}
\tau :X\rightarrow L^{2}(\Omega )\text{,}
\end{equation*}%
where $X\subset L^{2}(\Omega \times Y)$, is called a two-scale operator.
\end{definition}

\begin{definition}
A sequence $\left\{ u_{h}\right\} $ in $%
L^{2}(\Omega )$ is said to two-scale converge to a limit $u_{0}\in
L^{2}(\Omega \times Y)$ with respect to a sequence\ $\left\{ \tau
_{h}\right\} $ of two-scale operators if%
\begin{equation*}
\lim_{h\rightarrow \infty }\int_{\Omega }u_{h}\left( x\right) \tau
_{h}\left( v\right) \left( x\right) dx=\int_{\Omega }\int_{Y}u_{0}\left(
x,y\right) v\left( x,y\right) dydx
\end{equation*}%
for all $v\in X\subset L^{2}(\Omega \times Y)$.
\end{definition}

\begin{remark}
In general the auxiliary set is not restricted to be the unit cube $Y$.\ In
fact, it does not even have to be of the same dimension as $\Omega $. It
suffices that it is an open bounded set of any dimension; see any of the
references above.
\end{remark}

General two-scale convergence may be applicable to sequences where two-scale
convergence, very weak two-scale convergence and unfolding are not. An
example where two-scale convergence, very weak two-scale convergence and
unfolding are not immediately applicable are provided in Figure~\ref{Figur:
general twoscale} which shows elements of a sequence and its general
two-scale limit.%
\begin{figure}[H]
    \centering
    \includegraphics[width=0.32\linewidth]{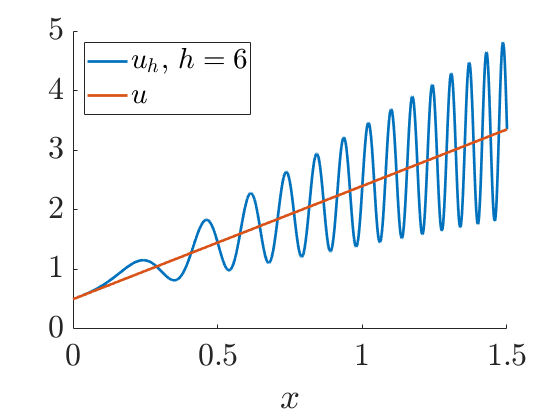}
    \includegraphics[width=0.32\linewidth]{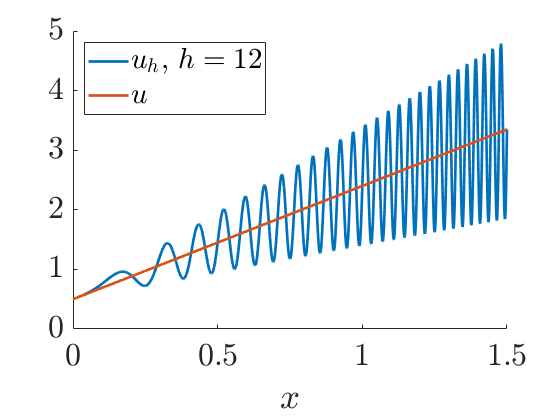}
    \includegraphics[width=0.32\linewidth]{twoscale1.png}
    \caption{Elements $u_h$ for two fixed values of $h$ from a non-periodic
    sequence together with its weak $L^2(\Omega)$-limit and, to the right, its general
    two-scale limit.}\label{Figur: general twoscale}
\end{figure}

We need to specify the properties of $\left\{ \tau _{h}\right\} $ and $X$
that make it possible to attain compactness of a similar kind as is
obtained for classical periodic two-scale convergence. Following \cite{Sil}\
we give the following definition.

\begin{definition}
\label{two-scale compatible}A sequence of operators $\left\{ \tau
_{h}\right\} $ is said to be two-scale compatible with respect to the normed
space $X\subset L^{2}(\Omega \times Y)$ if%
\begin{equation*}
\tau _{h}:X\rightarrow L^{2}(\Omega )
\end{equation*}%
are linear and there exist $C,D>0$ independent of $h$ such that%
\begin{equation}
\left\Vert \tau _{h}\left( v\right) \right\Vert _{L^{2}(\Omega )}\leq
C\left\Vert v\right\Vert _{X}  \label{Bound X}
\end{equation}%
and%
\begin{equation}
\lim_{h\rightarrow \infty }\left\Vert \tau _{h}\left( v\right) \right\Vert
_{L^{2}(\Omega )}\leq D\left\Vert v\right\Vert _{L^{2}(\Omega \times Y)}
\label{BoundL2gen}
\end{equation}%
for all $v\in X$.
\end{definition}

We state the following compactness theorem.

\begin{theorem}
\label{th14}Let $\left\{ u_{h}\right\} $ be a bounded sequence in $%
L^{2}(\Omega )$ and $\left\{ \tau _{h}\right\} $ be two-scale compatible
with respect to the separable Banach space $X\subset L^{2}(\Omega \times Y)$%
. Then there exists a subsequence of $\left\{ u_{h}\right\} $ which
two-scale converges with respect to $\left\{ \tau _{h}\right\} $ to some $%
u_{0}\in L^{2}(\Omega \times Y)$.
\end{theorem}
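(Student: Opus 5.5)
The proof follows the standard argument for classical two-scale compactness (Theorem~\ref{Th twoskalecompact}): a diagonal extraction over a countable dense set, then a density extension, then a Riesz representation. Throughout, let $\sup_h\|u_h\|_{L^2(\Omega)}\le K$. We consider the linear functionals
\begin{equation*}
L_h(v)=\int_\Omega u_h(x)\,\tau_h(v)(x)\,dx,\qquad v\in X .
\end{equation*}
By the Cauchy--Schwarz inequality and (\ref{Bound X}), $|L_h(v)|\le KC\|v\|_X$ uniformly in $h$.

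\textbf{Step 1 (diagonal extraction).} Since $X$ is separable, fix a countable dense subset $\{v_k\}\subset X$. For each $k$ the scalar sequence $\{L_h(v_k)\}_h$ is bounded. A Cantor diagonal argument therefore gives a subsequence, still denoted $\{u_h\}$, along which $L_h(v_k)$ converges for every $k$.

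\textbf{Step 2 (extension to all of $X$).} Fix $v\in X$ and $\delta>0$, and pick $v_k$ with $\|v-v_k\|_X<\delta$. The uniform bound $|L_h(v)-L_h(v_k)|\le KC\delta$ shows that $\{L_h(v)\}$ is Cauchy up to $2KC\delta$. Since $\delta$ is arbitrary, $L(v)=\lim_h L_h(v)$ exists for every $v\in X$, and $L$ is linear.

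\textbf{Step 3 (the $L^2(\Omega\times Y)$ bound).} This is the key step. Using (\ref{BoundL2gen}),
\begin{equation*}
|L(v)|\le \limsup_{h\to\infty} K\|\tau_h(v)\|_{L^2(\Omega)}\le KD\|v\|_{L^2(\Omega\times Y)} .
\end{equation*}
Condition (\ref{BoundL2gen}) is written with $\lim$; I would read it as $\limsup$, or note that the limit is assumed to exist. This estimate is exactly what turns a functional that is merely $X$-continuous into one that is $L^2(\Omega\times Y)$-continuous. Without it we would only get an element of $X'$, so this is where the two-scale compatibility is really used.

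\textbf{Step 4 (representation).} By Step 3, $L$ extends uniquely by continuity to a bounded linear functional on the closure $\overline{X}$ of $X$ in $L^2(\Omega\times Y)$. If $\overline{X}$ is not all of $L^2(\Omega\times Y)$, we extend it further by zero on $\overline{X}^{\perp}$, or equivalently we apply the Riesz representation theorem on the Hilbert space $\overline{X}$. This yields $u_0\in L^2(\Omega\times Y)$, with $\|u_0\|\le KD$, such that
\begin{equation*}
L(v)=\int_\Omega\int_Y u_0\,v\,dy\,dx\qquad\text{for all } v\in X .
\end{equation*}
This is precisely two-scale convergence of the subsequence with respect to $\{\tau_h\}$.

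The only delicate point is Step 3. Everything else uses only boundedness and separability. Step 3 requires combining the uniform bound on $\{u_h\}$ with (\ref{BoundL2gen}) in the limit, rather than with (\ref{Bound X}).
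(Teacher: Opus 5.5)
Your proof is correct and is essentially the same argument as the paper's proof, which simply cites Theorem~3 in \cite{HSSW}: a uniform bound in $X'$ for the functionals $v\mapsto\int_\Omega u_h\,\tau_h(v)\,dx$, diagonal extraction over a countable dense subset (sequential weak-$*$ compactness), the $L^2(\Omega\times Y)$-bound on the limit functional from (\ref{BoundL2gen}), and a Hahn--Banach/Riesz representation. Your observations that completeness of $X$ is not needed and that (\ref{BoundL2gen}) may be read as a $\limsup$ are both sound.
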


\begin{proof}
See Theorem 3 in \cite{HSSW}.
\end{proof}

\begin{remark}
\label{rem: general twoscale limit unique}
If $X$ in Theorem~\ref{th14} is dense in $L^{2}(\Omega \times Y)$, the limit 
$u_{0}$ is unique; see Remark~2 in Section~2.3 in \cite{HOL1}.
\end{remark}

We note that (\ref{Bound X}) and (\ref{BoundL2gen}) carry the same essential
properties as (\ref{BoundL2(omega,Cper(Y))}) and (\ref{Testf L2}),
respectively. Hence, as Theorem~\ref{th14} showed, if $X$ is a separable Banach space, we may pass to the
limit in the same way as for the periodic case in Section~\ref{Two-scale}.
However, the properties in Definition~\ref{two-scale compatible}
are insufficient to obtain connections between the conventional weak limits
and the two-scale type limits corresponding to (\ref{svag tvaskale}) and (%
\ref{svag testfunk}) why we give the following definition; see Definition~10
in \cite{Sil}.

\begin{definition}
\label{stark}Let $\left\{ \tau _{h}\right\} $ be two-scale compatible with
respect to $X\subset L^{2}(\Omega \times Y)$ and assume that $\left\{
u_{h}\right\} $ is a bounded sequence in $L^{2}(\Omega )$ two-scale
converging to $u_{0}$ with respect to $\left\{ \tau _{h}\right\} $. Then we
say that $\left\{ \tau _{h}\right\} $ is strongly two-scale compatible with
respect to $X$ if%
\begin{equation}
u_{h}\left( x\right) \rightharpoonup \int_{Y}u_{0}\left( x,y\right) dy\text{
in }L^{2}(\Omega )  \label{Strongcomp1}
\end{equation}%
and%
\begin{equation}
\tau _{h}\left( v\right) \left( x\right) \rightharpoonup \int_{Y}v\left(
x,y\right) dy\text{ in }L^{2}(\Omega )  \label{Strongcomp2}
\end{equation}%
for all $v\in X$.
\end{definition}

We return to Definition~\ref{stark} in the next section.

\section{Junctions between two-scale convergence and unfolding\label%
{Unification}}

In this section we investigate the relation
between two-scale convergence and the periodic unfolding method. The fact that two-scale convergence and unfolding are
equivalent for sequences bounded in $L^{2}(\Omega )$, as stated in the
proposition below, was established already in Proposition~2 in \cite{CiDaGr2}
and proven in Proposition~2.14 in \cite{CiDaGr1}.

\begin{proposition}
\label{Pr ekvivalens}Let $\left\{ u_{\varepsilon _{h}}\right\} $\ be a
bounded sequence in $L^{2}(\Omega )$. Then the following assertions are
equivalent:

\begin{enumerate}
\item[(i)] \qquad $u_{\varepsilon _{h}}\left( x\right) \overset{2}{\rightharpoonup }u_{0}\left(
x,y\right) \text{,}$

\item[(ii)] \qquad $\mathcal{T}_{\varepsilon _{h}}\left( u_{\varepsilon _{h}}\right) \left(
x\right) \rightharpoonup u_{0}\left( x,y\right) \text{ in }L^{2}(\Omega
\times Y)\text{.}$
\end{enumerate}
\end{proposition}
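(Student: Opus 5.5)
The plan is to move between the two formulations through the adjoint identity of Proposition~\ref{Prop: adjoint}, and to reduce everything to one key estimate comparing $\mathcal{U}_{\varepsilon_h}(v)$ with the oscillating test function $v(x,x/\varepsilon_h)$. Fix a test function $v\in L^2(\Omega;C_\sharp(Y))$. By Proposition~\ref{Prop: adjoint},
\begin{equation*}
\int_\Omega\int_Y \mathcal{T}_{\varepsilon_h}(u_{\varepsilon_h})\,v\,dydx=\int_\Omega u_{\varepsilon_h}\,\mathcal{U}_{\varepsilon_h}(v)\,dx ,
\end{equation*}
so both assertions concern pairings of $u_{\varepsilon_h}$ against two different families of test functions. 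The key estimate is
\begin{equation*}
\lim_{h\to\infty}\Bigl\Vert \mathcal{U}_{\varepsilon_h}(v)-v\Bigl(\cdot,\frac{\cdot}{\varepsilon_h}\Bigr)\Bigr\Vert_{L^2(\Omega)}=0 \qquad\text{for } v\in L^2(\Omega;C_\sharp(Y)).
\end{equation*}

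I would prove this estimate first for $v\in C(\overline\Omega;C_\sharp(Y))$. On $\widehat\Omega_{\varepsilon}$, Remark~\ref{rem: y-per function unfolding} gives $v(x,x/\varepsilon)=v(x,\{x/\varepsilon\}_Y)$. Also $\mathcal{U}_\varepsilon(v)(x)$ averages $v(\cdot,\{x/\varepsilon\}_Y)$ over the $\varepsilon$-cell containing $x$. Hence the difference is bounded by the modulus of continuity of $v$ in the first variable at scale $\varepsilon\sqrt N$. On $\Lambda_\varepsilon$ the difference is bounded by $\sup|v|$ times $|\Lambda_\varepsilon|^{1/2}\to0$. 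To extend to general $v\in L^2(\Omega;C_\sharp(Y))$, I would use density together with the uniform bounds (\ref{BoundL2(omega,Cper(Y))}) for $v(x,x/\varepsilon_h)$ and Proposition~\ref{Prop217}(i) for $\mathcal{U}_{\varepsilon_h}$. Since $\Vert w\Vert_{L^2(\Omega\times Y)}\le\Vert w\Vert_{L^2(\Omega;C_\sharp(Y))}$, both maps are bounded uniformly in $h$ in the $L^2(\Omega;C_\sharp(Y))$ norm, and a standard $3\epsilon$ argument closes the extension.

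With the estimate in hand, the boundedness of $\{u_{\varepsilon_h}\}$ in $L^2(\Omega)$ shows that
\begin{equation*}
\int_\Omega u_{\varepsilon_h}\mathcal{U}_{\varepsilon_h}(v)\,dx-\int_\Omega u_{\varepsilon_h}\,v\Bigl(x,\frac{x}{\varepsilon_h}\Bigr)dx\to0 .
\end{equation*}

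For (ii)$\Rightarrow$(i), weak convergence of the unfolded sequence tested against $v$, combined with the adjoint identity and this vanishing difference, gives exactly the two-scale limit identity in (i).

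For (i)$\Rightarrow$(ii), the same computation shows that $\int\int\mathcal{T}_{\varepsilon_h}(u_{\varepsilon_h})v\to\int\int u_0v$ for every $v\in L^2(\Omega;C_\sharp(Y))$. Proposition~\ref{Prop5} gives that $\{\mathcal{T}_{\varepsilon_h}(u_{\varepsilon_h})\}$ is bounded in $L^2(\Omega\times Y)$. Since $L^2(\Omega;C_\sharp(Y))$ is dense in $L^2(\Omega\times Y)$, a bounded sequence converging against a dense set of test functions converges weakly, which yields (ii).

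The main obstacle is the key estimate, in particular the density step and the treatment of the boundary layer $\Lambda_{\varepsilon}$. There one needs uniform-in-$h$ bounds for both operators in a norm for which continuous functions are dense. I would first try the argument above, using the sup over $y$ inside the $L^2$ norm in $x$. If that fails, a fallback is to use Proposition~\ref{Prop217}(ii)--(iii) and expand $\Vert \mathcal{U}_{\varepsilon_h}(v)-v(\cdot,\cdot/\varepsilon_h)\Vert^2$. The norms converge by (iii) and (\ref{Testf L2}). The cross term, via the adjoint identity, equals $\int\int\mathcal{T}_{\varepsilon_h}(v(\cdot,\cdot/\varepsilon_h))\,v$, and this should converge to $\Vert v\Vert^2_{L^2(\Omega\times Y)}$ by showing $\mathcal{T}_{\varepsilon_h}(v(\cdot,\cdot/\varepsilon_h))\to v$ in $L^2(\Omega\times Y)$.
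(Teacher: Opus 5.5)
Your proof is correct and self-contained. The paper gives no argument of its own here and defers to Proposition~2.14 in \cite{CiDaGr1}. The standard proof there is essentially the transpose of yours. One shows that $\mathcal{T}_{\varepsilon}\bigl(v(\cdot,\cdot/\varepsilon)\bigr)\to v$ strongly in $L^{2}(\Omega\times Y)$ for $v\in L^{2}(\Omega;C_{\sharp}(Y))$, again by uniform continuity plus density, using $\Vert\mathcal{T}_{\varepsilon}(w)\Vert\le\Vert w\Vert$. This is then paired with $\mathcal{T}_{\varepsilon}(u_{\varepsilon})$ through the unfolding integration formula and the multiplicativity $\mathcal{T}_{\varepsilon}(uw)=\mathcal{T}_{\varepsilon}(u)\mathcal{T}_{\varepsilon}(w)$, with the boundary layer $\Lambda_{\varepsilon}$ contributing a vanishing error.

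You instead compare the test functions on the $\Omega$-side, via $\Vert\mathcal{U}_{\varepsilon_h}(v)-v(\cdot,\cdot/\varepsilon_h)\Vert_{L^{2}(\Omega)}\to0$. This needs only the adjoint identity of Proposition~\ref{Prop: adjoint} and the bound in Proposition~\ref{Prop217}~(i). Multiplicativity, which has no analogue for $\mathcal{U}_{\varepsilon}$, never enters, and the $\Lambda_{\varepsilon}$ contribution is absorbed into your key estimate.

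Your version fits the viewpoint of Section~\ref{Unification} better. It says that the two sequences of two-scale operators $\{\mathcal{U}_{\varepsilon_h}\}$ and $\{v\mapsto v(\cdot,\cdot/\varepsilon_h)\}$ differ by something tending to zero, pointwise in $v$, on $L^{2}(\Omega;C_{\sharp}(Y))$. Hence the identification of limits at the end of the proof of Theorem~\ref{Pr adj tvaskalekomp} follows directly, without invoking the present proposition. The route in \cite{CiDaGr1}, in turn, yields strong convergence of unfolded oscillating test functions. That is the form that gets reused when passing to the limit in products entirely on $\Omega\times Y$. The fallback you sketch is not needed.
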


What we will see here is that both can be seen as special cases of general
two-scale convergence, but with a more restrictive class of admissible test
functions for periodic two-scale convergence. We also discuss possible similarities between unfolding and very weak two-scale convergence.

\begin{remark}
Aspects of the relation between two-scale convergence and unfolding as well as general two-scale convergence are also addressed by Franc\r{u} and Svanstedt in \cite{FrSv2012}, based on the modified definition of unfolding as stated in Remark~\ref{Rem:  Unfoldalt}.
\end{remark}

\subsection{A link through general two-scale convergence}

Classical two-scale convergence is obtained as a special case of general two-scale convergence if we choose the sequence of two-scale operators $%
\left\{ \tau _{h}\right\} $ as 
\begin{equation*}
\tau _{h}\left( v\right) \left( x\right) =v\left( x,\frac{x}{\varepsilon _{h}%
}\right), \quad x \in \Omega, 
\end{equation*}%
with $v\in X=L^{2}(\Omega ;C_{\sharp }(Y))$; see, e.g., Section~2.3 in \cite%
{HOL1}. The theorem below shows that unfolding also is a special case of general two-scale convergence if we choose%
\begin{equation*}
\tau _{h}\left( v\right) \left( x\right) =\mathcal{U}_{\varepsilon
_{h}}\left( v\right) \left( x\right), \quad x \in \Omega,
\end{equation*}%
with $v\in X=L^{2}(\Omega \times Y)$.

\begin{theorem}
\label{Pr adj tvaskalekomp}Let $\left\{ u_{\varepsilon _{h}}\right\} $ be a
bounded sequence in $L^{2}(\Omega )$. Then, for $X=L^{2}(\Omega \times Y)$,
there exists a subsequence of $\left\{ u_{\varepsilon _{h}}\right\} $ which
two-scale converges with respect to $\left\{ \mathcal{U}_{\varepsilon
_{h}}\right\} $ to some $u_{0}\in L^{2}(\Omega \times Y)$. Furthermore, $%
\left\{ \mathcal{U}_{\varepsilon _{h}}\right\} $ is strongly two-scale
compatible with respect to $X$\ and the limit $u_{0}$\ is the same as for
unfolding and periodic two-scale convergence.
\end{theorem}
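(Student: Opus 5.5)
We verify that $\{\mathcal{U}_{\varepsilon_h}\}$ is two-scale compatible with respect to $X=L^{2}(\Omega\times Y)$ and then apply Theorem~\ref{th14}. The space $X$ is a separable Banach space, and each $\mathcal{U}_{\varepsilon_h}:X\to L^{2}(\Omega)$ is linear. Proposition~\ref{Prop217}~(i) gives (\ref{Bound X}) with $C=1$, and Proposition~\ref{Prop217}~(iii) gives (\ref{BoundL2gen}) with $D=1$; in fact the limit exists and equals $\|v\|_{L^{2}(\Omega\times Y)}$. Theorem~\ref{th14} then yields a subsequence and some $u_0\in L^{2}(\Omega\times Y)$ with
\begin{equation*}
\lim_{h\to\infty}\int_\Omega u_{\varepsilon_h}(x)\,\mathcal{U}_{\varepsilon_h}(v)(x)\,dx=\int_\Omega\int_Y u_0(x,y)v(x,y)\,dy\,dx
\end{equation*}
for all $v\in X$. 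Since $X$ is all of $L^{2}(\Omega\times Y)$, which is trivially dense, this limit is unique by Remark~\ref{rem: general twoscale limit unique}.

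Next we identify the limit. By Proposition~\ref{Prop: adjoint}, the left-hand side above equals $\int_\Omega\int_Y \mathcal{T}_{\varepsilon_h}(u_{\varepsilon_h})v\,dy\,dx$. Since $v$ ranges over all of $L^{2}(\Omega\times Y)$, two-scale convergence with respect to $\{\mathcal{U}_{\varepsilon_h}\}$ is exactly weak $L^{2}(\Omega\times Y)$-convergence of $\mathcal{T}_{\varepsilon_h}(u_{\varepsilon_h})$ to $u_0$. Hence $u_0$ is the unfolding limit in (\ref{unfoldcompactformel}). By Proposition~\ref{Pr ekvivalens}, $u_0$ is also the classical two-scale limit along the same subsequence.

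It remains to check strong two-scale compatibility. For (\ref{Strongcomp1}), take any bounded $\{u_{\varepsilon_h}\}$ two-scale converging to $u_0$ with respect to $\{\mathcal{U}_{\varepsilon_h}\}$. The adjoint identity above shows $\mathcal{T}_{\varepsilon_h}(u_{\varepsilon_h})\rightharpoonup u_0$ weakly in $L^{2}(\Omega\times Y)$, so Proposition~\ref{Prop: unfolding gives weak} gives $u_{\varepsilon_h}\rightharpoonup\int_Y u_0\,dy$ in $L^{2}(\Omega)$. For (\ref{Strongcomp2}), fix $v\in X$ and apply Proposition~\ref{Prop218} to the constant sequence $v$, which trivially converges weakly to $v$ in $L^{2}(\Omega\times Y)$. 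This gives $\mathcal{U}_{\varepsilon_h}(v)\rightharpoonup\int_Y v\,dy$ in $L^{2}(\Omega)$.

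No step is genuinely hard, since everything reduces to the quoted propositions. The point needing care is the identification of the limit. It rests on the observation that, because the test space is the whole of $L^{2}(\Omega\times Y)$, the adjoint relation turns the general two-scale definition verbatim into weak convergence of the unfolded sequence. One must also make sure that the subsequence and the uniqueness statements line up, so that the same $u_0$ is obtained in all three senses.
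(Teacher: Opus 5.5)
Your proposal is correct and follows essentially the same route as the paper. You obtain two-scale compatibility from Proposition~\ref{Prop217}~(i) and~(iii), compactness and uniqueness from Theorem~\ref{th14} and Remark~\ref{rem: general twoscale limit unique}, pass to the unfolded sequence via the adjoint identity of Proposition~\ref{Prop: adjoint}, and finish with Propositions~\ref{Prop: unfolding gives weak}, \ref{Prop218} and~\ref{Pr ekvivalens}. The only minor difference is that you read the adjoint identity directly as weak $L^{2}(\Omega\times Y)$-convergence of $\mathcal{T}_{\varepsilon_h}(u_{\varepsilon_h})$ along the same subsequence. The paper instead extracts a further subsequence via (\ref{unfoldcompactformel}) and invokes uniqueness of weak limits, a detour your version cleanly avoids.
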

\begin{proof}
By Proposition~\ref{Prop217}~(i) and~(iii), $\left\{ \mathcal{U}_{\varepsilon_{h}} \right\}$ is two-scale compatible with respect to $L^2(\Omega \times Y)$. Thus, Theorem~\ref{th14} and Remark~\ref{rem: general twoscale limit unique} yields that, for $X=L^2(\Omega \times Y)$, there exists a subsequence of $\left\{ u_{\varepsilon_h} \right\}$ which two-scale converges with respect to $\left\{ \mathcal{U}_{\varepsilon_h} \right\}$ to some unique $u_0 \in L^2( \Omega \times Y)$, i.e.,
\begin{equation}
\lim_{h\rightarrow \infty }\int_{\Omega }u_{\varepsilon _{h}}\left( x\right) 
\mathcal{U}_{\varepsilon _{h}}\left( v\right) \left( x\right)
dx=\int_{\Omega }\int_{Y}u_{0}\left( x,y\right) v\left( x,y\right) dydx\text{%
,}  \label{shestj}
\end{equation}%
up to a subsequence, for all $v\in X=L^{2}(\Omega \times Y)$.

We continue by showing that $\left\{ \mathcal{U}_{\varepsilon _{h}}\right\} $ is
strongly two-scale compatible with respect to $X=L^{2}(\Omega \times Y)$. By Proposition~\ref{Prop: adjoint} and (\ref{shestj}), for the introduced subsequence,
\begin{align*}
\lim_{h \to \infty} \int_{\Omega }\int_{Y}\mathcal{T}_{\varepsilon _{h}}\left( u_{\varepsilon
_{h}}\right) \left( x,y\right) v\left( x,y\right) dydx
&= \lim_{h \to \infty} \int_{\Omega
}u_{\varepsilon _{h}}\left( x\right) \mathcal{U}_{\varepsilon _{h}}\left(
v\right) \left( x\right) dx \\
&= \int_{\Omega} \int_Y u_0 \left( x,y \right) v \left( x,y \right) dydx
\end{align*}
for all $v \in L^{2}(\Omega \times Y)$. We also have, by (\ref{unfoldcompactformel}), that
\begin{equation*}
\lim_{h\rightarrow \infty }\int_{\Omega }\int_{Y}\mathcal{T}_{\varepsilon
_{h}}\left( u_{\varepsilon _{h}}\right) \left( x,y\right) v\left( x,y\right)
dydx=\int_{\Omega }\int_{Y}\widetilde{u}_{0}\left( x,y\right) v\left(
x,y\right) dydx\text{,}
\end{equation*}
up to a subsequence of the introduced subsequence, for some $\widetilde{u}_{0}\in L^{2}(\Omega \times Y)$ and all $v\in L^{2}(\Omega \times Y)$. By uniqueness of weak limits we have that $\widetilde{u}_{0} = u_{0}$, and thus Proposition~\ref{Prop: unfolding gives weak} yields (\ref{Strongcomp1}). Condition (\ref{Strongcomp2}) follows directly from Proposition~\ref{Prop218} and hence we have shown that $\left\{ \mathcal{U}_{\varepsilon_{h}} \right\}$ is strongly two-scale compatible.

Finally, by Proposition~\ref{Pr ekvivalens} the limit $u_{0}$ is the same as for periodic two-scale convergence.
\end{proof}

Summing up, we have two ways to obtain the classical two-scale limit from the notion of general two-scale convergence. We have that for a bounded sequence $\left\{ u_{\varepsilon
_{h}}\right\} $ in $L^{2}(\Omega )$ there exists a subsequence such that%
\begin{equation}
\lim_{h\rightarrow \infty }\int_{\Omega }u_{\varepsilon _{h}}\left( x\right)
v\left( x,\frac{x}{\varepsilon _{h}}\right) dx=\int_{\Omega
}\int_{Y}u_{0}\left( x,y\right) v\left( x,y\right) dydx  \label{Twosc norm}
\end{equation}%
for all $v\in L^{2}(\Omega ;C_{\sharp }(Y))$,%
\begin{equation}
\lim_{h\rightarrow \infty }\int_{\Omega }\int_{Y}\mathcal{T}_{\varepsilon
_{h}}\left( u_{\varepsilon _{h}}\right) \left( x,y\right) v\left( x,y\right)
dydx=\int_{\Omega }\int_{Y}u_{0}\left( x,y\right) v\left( x,y\right) dydx
\label{Unfolding norm}
\end{equation}%
for all $v\in L^{2}(\Omega \times Y)$ and

\begin{equation}
\lim_{h\rightarrow \infty }\int_{\Omega }u_{\varepsilon _{h}}\left( x\right) 
\mathcal{U}_{\varepsilon _{h}}\left( v\right) \left( x\right)
dx=\int_{\Omega }\int_{Y}u_{0}\left( x,y\right) v\left( x,y\right) dydx\text{%
,}  \label{Unfolding adj}
\end{equation}%
for all $v\in L^{2}(\Omega \times Y)$, with the same $u_{0}\in L^{2}(\Omega
\times Y)$ in all three cases. Hence, (\ref{Twosc norm}), (\ref{Unfolding
norm}) and (\ref{Unfolding adj}) are equivalent.

\begin{remark}
Note that $\left\{ \tau _{h}\right\} =\left\{ \mathcal{U}_{\varepsilon
_{h}}\right\} $ is defined with respect to $X=L^{2}(\Omega \times Y)$, while
the original definition of two-scale compatibility was based on $X \subset \nobreak L^{2}(\Omega \times \nobreak Y)$.
\end{remark}

Considering the gradient of a bounded sequence $\left\{ u_{\varepsilon_h} \right\}$ in $H^1(\Omega)$, we have the following corollary to Theorem~\ref{Pr adj tvaskalekomp} stating a relation between periodic two-scale convergence, periodic unfolding and general two-scale convergence.

\begin{corollary}
\label{Th: gradsplit}Let $\left\{ u_{\varepsilon _{h}}\right\} $ be a
bounded sequence in $H^{1}(\Omega )$. Then, up to a subsequence,%
\begin{equation}
\nabla u_{\varepsilon _{h}}\left( x\right) \overset{2}{\rightharpoonup }%
\nabla u\left( x\right) +\nabla _{y}u_{1}\left( x,y\right) \text{,}
\label{Gradient two-scale}
\end{equation}%
\begin{equation}
\mathcal{T}_{\varepsilon _{h}}\left( \nabla u_{\varepsilon _{h}}\right)
\left( x,y\right) \rightharpoonup \nabla u\left( x\right) +\nabla
_{y}u_{1}\left( x,y\right) \text{ in\ }L^{2}(\Omega \times Y)^{N}
\label{Gradient unfolding}
\end{equation}%
and $\left\{ \nabla u_{\varepsilon _{h}}\right\} $ two-scale converges with
respect to $\left\{ \mathcal{U}_{\varepsilon _{h}}\right\} $ to $\nabla
u+\nabla _{y}u_{1}$ where $u\in H^{1}(\Omega )$ and $u_{1}\in L^{2}(\Omega
;H_{\sharp }^{1}(Y)/\mathbb{R})$.
\end{corollary}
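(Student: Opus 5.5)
We will deduce the corollary from known results on gradients of $H^1$-bounded sequences together with Theorem~\ref{Pr adj tvaskalekomp} and Proposition~\ref{Pr ekvivalens}. The central input is the classical gradient result for two-scale convergence: a bounded sequence in $H^{1}(\Omega)$ has, up to a subsequence, $u_{\varepsilon_h}\rightharpoonup u$ weakly in $H^1(\Omega)$ and (\ref{Gradient two-scale}) for some $u_1\in L^2(\Omega;H^1_\sharp(Y)/\mathbb{R})$. This is standard (e.g. Proposition~1.14 in \cite{All1} or the corresponding theorem in \cite{LNW}), and we would cite it.

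If a more self-contained route through unfolding is preferred, we can instead start from Theorem~\ref{Thunfoldampli}. Set $w_h=\varepsilon_h^{-1}(\mathcal{T}_{\varepsilon_h}(u_{\varepsilon_h})-\mathcal{M}_{\varepsilon_h}(u_{\varepsilon_h}))$. Since $\mathcal{M}_{\varepsilon_h}(u_{\varepsilon_h})$ is independent of $y$, and by the chain rule inside each cell $\nabla_y\mathcal{T}_{\varepsilon}(u)=\varepsilon\mathcal{T}_\varepsilon(\nabla u)$ on $\widehat{\Omega}_\varepsilon\times Y$, we get $\nabla_y w_h=\mathcal{T}_{\varepsilon_h}(\nabla u_{\varepsilon_h})$. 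Weak convergence of $w_h$ in $L^2(\Omega;H^1(Y))$ to $u_1+y^c\cdot\nabla u$ gives $\nabla_y w_h\rightharpoonup \nabla_y u_1+\nabla u$ in $L^2(\Omega\times Y)^N$, because $\nabla_y(y^c\cdot\nabla u(x))=\nabla u(x)$. This yields (\ref{Gradient unfolding}).

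Either way, one of (\ref{Gradient two-scale}) and (\ref{Gradient unfolding}) is obtained directly. Since $\{\nabla u_{\varepsilon_h}\}$ is bounded in $L^2(\Omega)^N$, Proposition~\ref{Pr ekvivalens}, applied componentwise, gives the other with the same limit $\nabla u+\nabla_y u_1$. For the general two-scale statement, Theorem~\ref{Pr adj tvaskalekomp}, applied componentwise to the bounded sequence $\{\partial_{x_i}u_{\varepsilon_h}\}$, gives a further subsequence that two-scale converges with respect to $\{\mathcal{U}_{\varepsilon_h}\}$. Its limit coincides with the unfolding limit by Theorem~\ref{Pr adj tvaskalekomp}, or more directly via Proposition~\ref{Prop: adjoint}:
\begin{equation*}
\int_\Omega \partial_{x_i}u_{\varepsilon_h}\,\mathcal{U}_{\varepsilon_h}(v)\,dx=\int_\Omega\int_Y \mathcal{T}_{\varepsilon_h}(\partial_{x_i}u_{\varepsilon_h})\,v\,dy\,dx\to\int_\Omega\int_Y(\partial_{x_i}u+\partial_{y_i}u_1)\,v\,dy\,dx
\end{equation*}
for all $v\in L^2(\Omega\times Y)$. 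This holds for the whole subsequence already chosen, so no further extraction is needed.

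The main obstacle is the first step, namely identifying the limit as $\nabla u+\nabla_y u_1$ with $u_1$ periodic. If we cite Theorem~\ref{Thunfoldampli}, the only delicate point is the boundary layer $\Lambda_{\varepsilon_h}$, where $\mathcal{T}_{\varepsilon_h}$ vanishes. There both $\nabla_y w_h$ and $\mathcal{T}_{\varepsilon_h}(\nabla u_{\varepsilon_h})$ are zero, so the identity $\nabla_y w_h=\mathcal{T}_{\varepsilon_h}(\nabla u_{\varepsilon_h})$ holds on all of $\Omega\times Y$. The rest is bookkeeping of subsequences and the componentwise use of the earlier equivalences.
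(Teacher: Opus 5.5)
Your proposal is correct and follows essentially the paper's route. The paper likewise cites the classical gradient results (\cite{Ng1}, \cite{All1} for the two-scale limit and \cite{CiDaGr3} for the unfolded gradient, which your optional derivation from Theorem~\ref{Thunfoldampli} via $\nabla_y w_h=\mathcal{T}_{\varepsilon_h}(\nabla u_{\varepsilon_h})$ reproduces), then applies Theorem~\ref{Pr adj tvaskalekomp} to the $L^2$-bounded sequence $\{\nabla u_{\varepsilon_h}\}$. Your extra use of Proposition~\ref{Pr ekvivalens} and of the adjoint identity in Proposition~\ref{Prop: adjoint} does not change the argument; it only makes explicit that all three convergences hold along one subsequence with the same $u$ and $u_1$, which the paper leaves implicit.
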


\begin{proof}
For a proof of (\ref{Gradient two-scale}), see, e.g., Theorem~3 in \cite{Ng1}
or Proposition~1.14 in \cite{All1} and for a proof of (\ref{Gradient
unfolding}) see, e.g., Theorem~3.5 in \cite{CiDaGr3}. Since $\left\{ u_{\varepsilon_h} \right\}$ is bounded in $H^1(\Omega)$, $\left\{ \nabla u_{\varepsilon_{h}} \right\}$ is bounded in $L^{2}(\Omega )^{N}$ and it follows
from Theorem~\ref{Pr adj tvaskalekomp} that $\left\{ \nabla u_{\varepsilon
_{h}}\right\} $ two-scale converges with respect to $\left\{ \mathcal{U}%
_{\varepsilon _{h}}\right\} $ to the same limit as for unfolding and periodic two-scale convergence, i.e., $\nabla u+\nabla _{y}u_{1}$.
\end{proof}

\subsection{Very weak two-scale convergence and unfolding}

We turn to the question whether very weak two-scale convergence has a relation to unfolding analogous to what we saw earlier in this chapter regarding two-scale convergence and unfolding. We recall that the concept of very weak two-scale convergence relies on a special choice of test functions that capture only the micro-oscillations while the macroscopic trend is filtered away. We also recall the convergence \eqref{eq:compvw} in Theorem~\ref{Th: compactness very weak two-scale}, i.e., for $\left\{ u_{\varepsilon
_{h}}\right\} $ bounded in $H_{0}^{1}(\Omega )$, up to a subsequence,
\begin{equation*}
\varepsilon _{h}^{-1}u_{\varepsilon _{h}}\left( x\right) \overset{2}{%
\underset{\operatorname{\mathrm{vw}}}{\rightharpoonup }}u_{1}\left( x,y\right)
\end{equation*}%
where $u_1 \in L^2(\Omega;H^1_{\sharp}(Y)/\mathbb{R})$ is the same as in Corollary~\ref{Th: gradsplit}. The result for unfolding that so far shows the closest relationship to very weak two-scale convergence is \eqref{vw unfolding} in Theorem~\ref{Thunfoldampli}, i.e., for $\left\{ u_{\varepsilon_h} \right\}$ bounded in $H^1(\Omega)$, up to a subsequence,
\begin{equation}
\label{unfolding likt vw}
\varepsilon _{h}^{-1}\left( \mathcal{T}_{\varepsilon _{h}}\left(
u_{\varepsilon _{h}}\right) \left( x,y\right) -\mathcal{M}_{\varepsilon
_{h}}\left( u_{\varepsilon _{h}}\right) \left( x\right) \right)
\rightharpoonup u_{1}\left( x,y\right) +y^{c}\cdot \nabla u\left( x\right)
\end{equation}%
in $L^{2}(\Omega ;H^{1}(Y))$, where $u_{1}$ is the same as above and $u$ is the weak $H^1(\Omega)$-limit of $\left\{ u_{\varepsilon_h} \right\}$. Here the subtraction of $\mathcal{M}_{\varepsilon _{h}}\left( u_{\varepsilon
_{h}}\right) $ plays a role similar to that of the special choice of test functions in very weak two-scale convergence in eliminating the global trend.

Relating very weak two-scale convergence and unfolding results to some form
of expansion may contribute to the understanding for these notions. For the
notion of very weak two-scale convergence, it holds that (see Remark \ref%
{Remark vw})%
\begin{equation}
\varepsilon _{h}^{-1}\left( u_{\varepsilon _{h}}\left( x\right) -u\left(
x\right) \right) \underset{\operatorname{vw}}{\overset{2}{\rightharpoonup }}%
u_{1}\left( x,y\right) \text{.}  \label{vw-two-scale short strengtened}
\end{equation}%
Originally, two-scale convergence was presented as the rigorous
justification of the method of asymptotic expansions. It is easy to see the
relationship between (\ref{vw-two-scale short strengtened}) and the
truncated expansion%
\begin{equation*}
u_{\varepsilon }\left( x\right) \approx u\left( x\right) +\varepsilon
u_{1}\left( x,\frac{x}{\varepsilon }\right) \text{.}
\end{equation*}%
In the setting of (\ref{vw-two-scale short strengtened}) it is reasonable to
consider the alternative that the sequence $\left\{ \varepsilon
_{h}^{-1}\left( u_{\varepsilon _{h}}-u\right) \right\} $ is bounded in $%
L^{2}(\Omega )$. Then, at least up to a subsequence, it will two-scale
converge and hence also converge weakly in $L^{2}(\Omega )$. The complete
two-scale limit may then contain an extra term $w\in L^{2}(\Omega )$, the
weak $L^{2}(\Omega )$-limit not appearing in (\ref{vw-two-scale
short strengtened}), such that%
\begin{equation*}
\varepsilon _{h}^{-1}\left( u_{\varepsilon _{h}}\left( x\right) -u\left(
x\right) \right) \overset{2}{\rightharpoonup }w\left( x\right) +u_{1}\left(
x,y\right) \text{.}
\end{equation*}%
The term $w$ vanishes for the type of test functions used in very weak
two-scale convergence, which is why it does not appear in (\ref{vw-two-scale
short strengtened}). There is, to the knowledge of the authors, no really
obvious interpretation of this term other than simply to be the weak $%
L^{2}(\Omega )$-limit for $\left\{ \varepsilon _{h}^{-1}\left(
u_{\varepsilon _{h}}-u\right) \right\} $, see \cite{FHOLP2}.

Let us compare with the corresponding result (\ref{unfolding likt vw}) for
unfolding, where the extra term is specified as $y^{c}\cdot \nabla u$. From%
\begin{equation*}
\varepsilon ^{-1}\left( \mathcal{T}_{\varepsilon }\left( u_{\varepsilon
}\right) \left( x,y\right) -\mathcal{M}_{\varepsilon }\left( u_{\varepsilon
}\right) \left( x\right) \right) \approx u_{1}\left( x,y\right) +y^{c}\cdot
\nabla u\left( x\right) 
\end{equation*}%
we obtain%
\begin{equation}
\mathcal{T}_{\varepsilon }\left( u_{\varepsilon }\right) \left( x,y\right)
\approx \mathcal{M}_{\varepsilon }\left( u_{\varepsilon }\right) \left(
x\right) +\varepsilon u_{1}\left( x,y\right) +\varepsilon y^{c}\cdot \nabla
u\left( x\right)   \label{Asymt exp unfolding}
\end{equation}%
and we can see how (\ref{Asymt exp unfolding}) reveals the components that $%
\mathcal{T}_{\varepsilon }\left( u_{\varepsilon }\right) $ can be considered
to be built of. For the sake of simplicity we study the one-dimensional case
and fix an interval $Y_{i}^{\varepsilon }$ of length $\varepsilon $ on the $x
$-axis. We let $\mathcal{T}_{\varepsilon }\left( u_{\varepsilon }\right) $
start at the value of $u_{\varepsilon }$ at the beginning of the interval
and follow its development through $Y_{i}^{\varepsilon }$, but let this
happen along a separate $y$-axis. $\mathcal{M}_{\varepsilon }\left(
u_{\varepsilon }\right) $ gives the mean of $\mathcal{T}_{\varepsilon
}\left( u_{\varepsilon }\right) $ during this process and provides us with a
constant base level for $\mathcal{T}_{\varepsilon }\left( u_{\varepsilon
}\right) $ on $Y_{i}^{\varepsilon }$, where changes on both macroscopic and
microscopic level have been removed. Adding $\varepsilon u_{1}$ means that
micro-oscillations along $Y=\left( 0,1\right) ^{N}$ are back and $%
\varepsilon y^{c}\cdot \nabla u$ gives us the slow change while passing
through $Y_{i}^{\varepsilon }$ by means of letting $y$ go across $Y$, which
means that $\varepsilon y^{c}$ passes through $\left( -\frac{\varepsilon }{2}%
,\frac{\varepsilon }{2}\right) $. Multiplying with the slow main tendency $%
\nabla u$ of rate of change we get an approximation of the change in the
macroscopic sense while passing through $Y_{i}^{\varepsilon }$. We may
consider (\ref{Asymt exp unfolding}) as a kind of truncated asymptotic
expansion for the unfolded function $\mathcal{T}_{\varepsilon }\left(
u_{\varepsilon }\right) $. The expressions (\ref{vw-two-scale short
strengtened}) and (\ref{unfolding likt vw}) may now become easier to understand.

Now to the question whether there is a somewhat more explicit relation to unfolding similar to what we saw earlier in this chapter regarding two-scale convergence also for the notion of very weak two-scale convergence, i.e., whether there is an equivalence of the same kind as in (\ref{Twosc norm})--(\ref{Unfolding adj}). It turns out that the amplifying by $\varepsilon _{h}^{-1}$ and the piecewise character of the unfolding operator and its adjoint causes challenges and the existence of an equivalence of the same kind as we found for sequences
bounded in $L^{2}(\Omega )$ and $H^{1}(\Omega )$ is, up to the authors' knowledge, an open question.

Very weak two-scale convergence relies substantially on the quite restrictive
choice of test functions, i.e., $v_{1}\in D(\Omega )$ and $v_{2} \in C_{\sharp}^{\infty} (Y)/\mathbb{R}$. In (\ref{unfolding likt vw}), in contrast to very weak two-scale convergence, there is no special restriction on the class of admissible test functions. In fact, the convergence process is controlled by the entire dual of $L^{2}(\Omega ;H^{1}(Y))$ which is a larger class of functionals than the standard class for unfolding generated by test functions in $L^{2}(\Omega\times Y)$. Assuming that we have $\mathcal{M}_{\varepsilon _{h}}\left( u_{\varepsilon
_{h}}\right) =0$, Remark~\ref{Rem: M0 ger ueps0} gives that (\ref{unfolding likt vw}) reduces to
\begin{equation}
\varepsilon _{h}^{-1}\mathcal{T}_{\varepsilon _{h}}\left( u_{\varepsilon
_{h}}\right) \left( x,y\right) \rightharpoonup u_{1}\left( x,y\right) \text{
in }L^{2}(\Omega ;H^{1}(Y))\text{,} \label{unfolding vw jmf}
\end{equation}%
which is a result more similar to the corresponding result for very weak two-scale convergence. We have introduced a more restrictive class of admissible sequences $\left\{u_{\varepsilon_h}\right\}$ instead of the small class of test functions used in very weak two-scale convergence.

It remains to see if we can enlarge the set of admissible sequences $\left\{
u_{\varepsilon _{h}}\right\} $ if we apply the same smaller class of test
functions used in very weak two-scale convergence also for the concept based
on unfolding. First note that, still for $\mathcal{M}_{\varepsilon _{h}}\left( u_{\varepsilon
_{h}}\right) =0$, by Proposition~\ref{Prop: adjoint} and (\ref{unfolding vw jmf}),
\begin{align*}
\underset{h\rightarrow \infty }{\lim }&\int_{\Omega }\varepsilon
_{h}^{-1}u_{\varepsilon _{h}}\left( x\right) \mathcal{U}_{\varepsilon
_{h}}\left( v_{1}v_{2}\right) \left( x\right) dx \\
&=\underset{h\rightarrow \infty }{\lim }\int_{\Omega }\int\nolimits_{Y}\varepsilon_h ^{-1}\mathcal{T}_{\varepsilon_h
}\left( u_{\varepsilon_h }\right) \left( x,y\right) v\left( x,y\right)
dydx  \\
&=\int_{\Omega
}\int\nolimits_{Y}u_{1}\left( x,y\right) v_{1}\left( x\right) v_{2}\left(
y\right) dydx\text{,}
\end{align*}%
where $v_1 \in D(\Omega)$ and $v_2 \in C_{\sharp}^{\infty}(Y)/\mathbb{R}$. There exist ways to relax the restriction $\mathcal{M}_{\varepsilon _{h}}\left( u_{\varepsilon
_{h}}\right) =0$; in the following theorem we consider a sequence of functions
decomposed into a term of the above kind and a piecewise constant term.

\begin{theorem}
Let
\begin{equation*}
    u_{\varepsilon _{h}}=\widetilde{u}_{\varepsilon _{h}}+\widehat{u}%
    _{\varepsilon _{h}}
\end{equation*}
where $\left\{ \widetilde{u}_{\varepsilon _{h}}\right\} $ is a bounded sequence in $H^{1}(\Omega )$ such that
\begin{equation*}
    \mathcal{M}_{\varepsilon _{h}}\left( \widetilde{u}_{\varepsilon _{h}}\right) = 0
\end{equation*}
and $\left\{ \widehat{u}_{\varepsilon_h} \right\}$ is defined on $\Omega$ by
\begin{equation*}
    \widehat{u}_{\varepsilon_h} \left( x \right) = \left \{
    \begin{aligned}
        & c^h_\xi && \text{for $x \in \varepsilon_h \left( \xi + Y \right)$,} \\
        &0 && \text{for $x \in \Omega \setminus \varepsilon_h \left( \xi + Y \right)$,}
    \end{aligned} \right.
\end{equation*}
for some sequence \smash{$\bigl\{ \bigl\{ c^h_{\xi} \bigr\}_{\xi \in \Xi_{\varepsilon_h}} \bigr\}$}
of lists of real constants. Then, up to a subsequence,
\begin{equation*}
\underset{h\rightarrow \infty }{\lim }\int_{\Omega }\varepsilon
_{h}^{-1}u_{\varepsilon _{h}}\left( x\right) \mathcal{U}_{\varepsilon
_{h}}\left( v_{1}v_{2}\right) \left( x\right) dx=\int_{\Omega
}\int\nolimits_{Y}u_{1}\left( x,y\right) v_{1}\left( x\right) v_{2}\left(
y\right) dydx
\end{equation*}%
for all $v_{1}\in D(\Omega )$, $v_{2}\in C_{\sharp }^{\infty }(Y)/\mathbb{R}$, where $u_{1}$ is the corrector for the two-scale limit of $\left\{ \nabla 
\widetilde{u}_{\varepsilon _{h}}\right\} $.
\end{theorem}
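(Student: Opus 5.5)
By linearity of the integral, I will split the left-hand side into a contribution from $\widetilde{u}_{\varepsilon_h}$ and one from $\widehat{u}_{\varepsilon_h}$. I will show that the first converges to the claimed limit, as in the computation preceding the theorem, and that the second vanishes identically for every $h$.

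\textbf{The term from $\widetilde{u}_{\varepsilon_h}$.} Since $\{\widetilde{u}_{\varepsilon_h}\}$ is bounded in $H^1(\Omega)$, Theorem~\ref{Thunfoldampli} gives, up to a subsequence,
\begin{equation*}
\varepsilon_h^{-1}\bigl(\mathcal{T}_{\varepsilon_h}(\widetilde{u}_{\varepsilon_h})-\mathcal{M}_{\varepsilon_h}(\widetilde{u}_{\varepsilon_h})\bigr)\rightharpoonup u_1+y^c\cdot\nabla \widetilde{u}
\end{equation*}
in $L^2(\Omega;H^1(Y))$. Here $\widetilde{u}$ is the weak $H^1(\Omega)$-limit and $u_1$ is the corrector in Corollary~\ref{Th: gradsplit} for $\{\nabla\widetilde{u}_{\varepsilon_h}\}$.

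By Rellich, $\widetilde{u}_{\varepsilon_h}\to\widetilde{u}$ strongly in $L^2(\Omega)$. Since $\mathcal{M}_{\varepsilon_h}(\widetilde{u}_{\varepsilon_h})=0$, Remark~\ref{Rem: M0 ger ueps0} gives $\widetilde{u}=0$. The convergence therefore reduces to (\ref{unfolding vw jmf}), and hence also holds weakly in $L^2(\Omega\times Y)$.

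Proposition~\ref{Prop: adjoint} with $v=v_1v_2\in L^2(\Omega\times Y)$ then yields
\begin{equation*}
\int_\Omega \varepsilon_h^{-1}\widetilde{u}_{\varepsilon_h}\,\mathcal{U}_{\varepsilon_h}(v_1v_2)\,dx=\int_\Omega\int_Y\varepsilon_h^{-1}\mathcal{T}_{\varepsilon_h}(\widetilde{u}_{\varepsilon_h})\,v_1v_2\,dy\,dx\to\int_\Omega\int_Y u_1v_1v_2\,dy\,dx .
\end{equation*}

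\textbf{The term from $\widehat{u}_{\varepsilon_h}$.} I will show $\int_\Omega\widehat{u}_{\varepsilon_h}\,\mathcal{U}_{\varepsilon_h}(v_1v_2)\,dx=0$ exactly, so the amplification by $\varepsilon_h^{-1}$ is harmless. I read the definition of $\widehat{u}_{\varepsilon_h}$ as: equal to $c^h_\xi$ on $\varepsilon_h(\xi+Y)$ for $\xi\in\Xi_{\varepsilon_h}$, and $0$ on $\Lambda_{\varepsilon_h}$.

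Since $\mathcal{U}_{\varepsilon_h}(v_1v_2)=0$ on $\Lambda_{\varepsilon_h}$, the integral reduces to
\begin{equation*}
\sum_{\xi\in\Xi_{\varepsilon_h}}c^h_\xi\int_{\varepsilon_h(\xi+Y)}\mathcal{U}_{\varepsilon_h}(v_1v_2)(x)\,dx .
\end{equation*}
On the cell $\varepsilon_h(\xi+Y)$ we have $[x/\varepsilon_h]_Y=\xi$, so
\begin{equation*}
\mathcal{U}_{\varepsilon_h}(v_1v_2)(x)=\Bigl(\int_Y v_1(\varepsilon_h\xi+\varepsilon_h z)\,dz\Bigr)\,v_2\bigl(\{x/\varepsilon_h\}_Y\bigr).
\end{equation*}
Substituting $x=\varepsilon_h(\xi+y)$ gives
\begin{equation*}
\int_{\varepsilon_h(\xi+Y)}\mathcal{U}_{\varepsilon_h}(v_1v_2)\,dx=\varepsilon_h^N\Bigl(\int_Y v_1(\varepsilon_h\xi+\varepsilon_h z)\,dz\Bigr)\int_Y v_2(y)\,dy=0,
\end{equation*}
because $v_2$ has mean zero. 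So every cell contributes zero.

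This step is the one I expect to need the most care: one must check that the product structure of $\mathcal{U}_{\varepsilon_h}$ on a single cell gives exact cancellation, and handle the boundary layer $\Lambda_{\varepsilon_h}$ correctly. It is, however, a direct computation.

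Adding the two terms proves the claim along the subsequence supplied by Theorem~\ref{Thunfoldampli}.
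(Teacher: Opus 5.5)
Your proof is correct and follows the same route as the paper. You split off the piecewise-constant part and show its contribution vanishes cell by cell, because $v_2$ has mean zero while $\widehat{u}_{\varepsilon_h}$ and the $v_1$-factor of $\mathcal{U}_{\varepsilon_h}(v_1v_2)$ are constant on each $\varepsilon_h(\xi+Y)$; the $\widetilde{u}_{\varepsilon_h}$ part is handled via Proposition~\ref{Prop: adjoint}, Theorem~\ref{Thunfoldampli} and Remark~\ref{Rem: M0 ger ueps0}, exactly as in the paper. Your explicit change of variables $x=\varepsilon_h(\xi+y)$ and your appeal to Rellich's theorem (needed to invoke Remark~\ref{Rem: M0 ger ueps0}) just make precise steps the paper leaves implicit.
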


\begin{proof}
We have that%
\begin{multline}
\label{eq:uppdeladintegral}
\int_{\Omega }\varepsilon _{h}^{-1}u_{\varepsilon _{h}}\left( x\right) 
\mathcal{U}_{\varepsilon _{h}}\left( v_{1}v_{2}\right) \left( x\right) dx \\
=\int_{\Omega }\varepsilon _{h}^{-1}\widetilde{u}_{\varepsilon _{h}}\left(
x\right) \mathcal{U}_{\varepsilon _{h}}\left( v_{1}v_{2}\right) \left(
x\right) dx+\int_{\Omega }\varepsilon _{h}^{-1}\widehat{u}_{\varepsilon
_{h}}\left( x\right) \mathcal{U}_{\varepsilon _{h}}\left( v_{1}v_{2}\right)
\left( x\right) dx
\end{multline}%
and we start by showing that the second integral on the right-hand-side equals zero. By the definition of $\mathcal{U}_{\varepsilon_h}$ and Remarks~\ref{rem: y-per function unfolding} and~\ref{rem: adjoint lika med local average} we have
\begin{align*}
    \int_{\Omega }& \varepsilon _{h}^{-1}\widehat{u}_{\varepsilon _{h}}\left(
    x\right) \mathcal{U}_{\varepsilon _{h}}\left( v_{1}v_{2}\right) \left(
    x\right) dx \\
    &=\int_{\Omega }\varepsilon _{h}^{-1}\widehat{u}_{\varepsilon _{h}}\left(
    x\right) \mathcal{M}_{\varepsilon_h} \left( v_1 \right) \left( x \right) v_{2}\left( 
    \frac{x}{\varepsilon _{h}}\right) dx \\
    &=\sum_{\xi \in \Xi_{\varepsilon_h}}\int_{\varepsilon_h \left( \xi + Y \right)}\varepsilon _{h}^{-1}%
    \widehat{u}_{\varepsilon _{h}}\left( x\right) \mathcal{M}_{\varepsilon_h} \left( v_1 \right) \left( x \right) v_{2}\left( \frac{x}{%
    \varepsilon _{h}}\right) dx\text{.}
\end{align*}%
Since $\widehat{u}_{\varepsilon _{h}}$ and
\begin{equation*}
    \mathcal{M}_{\varepsilon _{h}}\left( v_{1}\right) \left( x\right)
    =\int\nolimits_{Y}v_{1}\left( \varepsilon _{h}\left[ \frac{x}{\varepsilon
    _{h}}\right] _{Y}+\varepsilon _{h}z\right) dz
\end{equation*}%
are constant on each $\varepsilon_h$-cube $\varepsilon_h \left( \xi + Y \right)$, and $v_{2}\left( x/\varepsilon _{h}\right) $ has average zero on any such cube we have that
\begin{equation*}
    \sum_{\xi \in \Xi_{\varepsilon_h}}\int_{\varepsilon_h \left( \xi + Y \right)} \varepsilon _{h}^{-1}%
    \widehat{u}_{\varepsilon _{h}}\left( x\right) \mathcal{M}_{\varepsilon _{h}}\left( v_{1}\right) \left( x\right) v_{2}\left( \frac{x}{%
    \varepsilon _{h}}\right) dx = 0
\end{equation*}
and we conclude that the second integral on the right-hand-side in \eqref{eq:uppdeladintegral} equals zero. Hence, letting $h \to \infty$ in \eqref{eq:uppdeladintegral}, applying Proposition~\ref{Prop: adjoint}, Theorem~\ref{Thunfoldampli} and Remark~\ref{Rem: M0 ger ueps0} we arrive at
\begin{align*}
\underset{h\rightarrow \infty }{\lim }
&\int_{\Omega }\varepsilon
_{h}^{-1}u_{\varepsilon _{h}}\left( x\right) \mathcal{U}_{\varepsilon
_{h}}\left( v_{1}v_{2}\right) \left( x\right) dx \\
&= \underset{h\rightarrow
\infty }{\lim }\int_{\Omega }\varepsilon _{h}^{-1}\widetilde{u}_{\varepsilon
_{h}}\left( x\right) \mathcal{U}_{\varepsilon _{h}}\left( v_{1}v_{2}\right)
\left( x\right) dx \\
&=\underset{h\rightarrow \infty }{\lim }\int_{\Omega }\int_{Y}\varepsilon
_{h}^{-1}\mathcal{T}_{\varepsilon _{h}}\left( \widetilde{u}_{\varepsilon
_{h}}\right) \left( x,y\right) v_{1}\left( x\right) v_{2}\left( y\right) dydx \\
&= \int_{\Omega }\int\nolimits_{Y}u_{1}\left( x,y\right) v_{1}\left( x\right)
v_{2}\left( y\right) dydx\text{,}
\end{align*}%
up to a subsequence, for all $v_{1}\in D(\Omega )$ and $v_{2}\in C_{\sharp
}^{\infty }(Y)/\mathbb{R}$ and we have the desired result.
\end{proof}

\begin{remark}
Another result along reminiscent lines was obtained in the early paper on
unfolding in Section~3 in \cite{CiDaGr2}, see also Section~4 in \cite%
{CiDaGr1}, by means of sequences of macro-micro scale-splittings%
\begin{equation*}
u_{\varepsilon _{h}}\left( x\right) =\mathcal{Q}_{\varepsilon _{h}}\left(
u_{\varepsilon _{h}}\right) \left( x\right) +\mathcal{R}_{\varepsilon
_{h}}\left( u_{\varepsilon _{h}}\right) \left( x\right) \text{.}
\end{equation*}%
Here $\mathcal{Q}_{\varepsilon _{h}}\left( u_{\varepsilon _{h}}\right) $
represents the macro tendency for $u_{\varepsilon _{h}}$ obtained by means
of interpolation over the $\varepsilon _{h}$-cells and $\mathcal{R}%
_{\varepsilon _{h}}\left( u_{\varepsilon _{h}}\right) $ the remaining
changes of $u_{\varepsilon _{h}}$ of order $\varepsilon _{h}$. We have that
for $\left\{ u_{\varepsilon _{h}}\right\} $ bounded in $H^{1}(\Omega )$, up
to a subsequence,%
\begin{equation*}
\mathcal{R}_{\varepsilon _{h}}\left( u_{\varepsilon _{h}}\right) \left(
x\right) \rightarrow 0\text{ in }L^{2}(\Omega )
\end{equation*}%
and%
\begin{equation*}
\mathcal{Q}_{\varepsilon _{h}}\left( u_{\varepsilon _{h}}\right) \left(
x\right) \rightharpoonup u\left( x\right) \text{ in }H^{1}(\Omega )\text{;}
\end{equation*}%
see Proposition~4.7~(i) and (ii), respectively,\ in \cite{CiDaGr1}. For
the sequence with amplified oscillations, the mode of convergence is again
quite strong, but only concerns $\mathcal{R}_{\varepsilon _{h}}\left(
u_{\varepsilon _{h}}\right) $, while the macro tendency $\mathcal{Q}%
_{\varepsilon _{h}}\left( u_{\varepsilon _{h}}\right) $ is removed. For $%
\left\{ u_{\varepsilon _{h}}\right\} $ bounded in $H^{1}(\Omega )$ or $%
H_{0}^{1}(\Omega )$,%
\begin{equation*}
\varepsilon _{h}^{-1}\mathcal{T}_{\varepsilon _{h}}\left( \mathcal{R}%
_{\varepsilon _{h}}\left( u_{\varepsilon _{h}}\right) \right) \left(
x,y\right) \rightharpoonup u_{1}^{\prime }\left( x,y\right) \text{ in }%
L^{2}(\Omega ;H^{1}(Y))\text{,}
\end{equation*}%
up to a subsequence where $u_{1}^{\prime }$ is related to $u_{1}$ by%
\begin{equation*}
u_{1}\left( x,y\right) =u_{1}^{\prime }\left( x,y\right)
-\int\nolimits_{Y}u_{1}^{\prime }\left( x,y\right) dy\text{;}
\end{equation*}%
see Proposition~4.8 in \cite{CiDaGr1}.
\end{remark}

To conclude, for sequences studied in this subsection, very weak two-scale convergence and the concepts of convergence based on the periodic unfolding method do not seem to be equivalent in the same immediate sense as for the sequences and concepts of convergence discussed earlier in the
section.


{\small
\begin{thebibliography}{99}
\bibitem{All1} {\it G.~Allaire}:  Homogenization and two-scale convergence. SIAM J. Math. Anal. \textit{23} (1992), no. 6, 1482--1518.
Zbl 0770.35005,
MR 1185639,
DOI 10.1137/0523084

\bibitem{AlBr} {\it G.~Allaire}, {\it M.~Briane}: Multiscale convergence and reiterated homogenization. Proc. Roy. Soc. Edinburgh Sect. A. {\it 126} (1996), 297--342.
Zbl 0866.35017,
MR 1386865,
DOI 10.1017/S0308210500022757

\bibitem{AC} {\it G.~Allaire}, {\it C.~Conca}:  Bloch wave homogenization and spectral asymptotic analysis. J. Math. Pures Appl. {\it 77} (1998), 153--208.
Zbl 0901.35005,
MR 1614641,
DOI 10.1016/S0021-7824(98)80068-8

\bibitem{ACV} {\it G.~Allaire}, {\it C.~Conca}, {\it M.~Vanninathan}: Spectral asymptotics of the Helmholtz model in fluid-solid structures. International J. Numer. Methods Engrg. {\it 46} (1999), 1463--1504.
Zbl 0952.74022,
MR 1728849,
DOI 10.1002/(SICI)1097-0207(19991130)46:9<1463::AID-NME708>3.0.CO;2-9

\bibitem{AAB} {\it M.~Amar}, {\it D.~Andreucci}, {\it D.~Bellaveglia}: The
time-periodic unfolding operator and applications to parabolic
homogenization. Atti Accad. Naz. Lincei Cl. Sci. Fis. Mat. Natur. {\it 28} (2017), 663--700.
Zbl 1383.35015,
MR 3729583,
DOI 10.4171/RLM/781

\bibitem{AAGT} {\it M.~Amar}, {\it D.~Andreucci}, {\it R.~Gianni}, {\it C.~Timofte}: Homogenization results for a class of parabolic equations with a non-local interface condition via time-periodic unfolding. Nonlinear Differ. Equ. Appl. {\it 26} (2019), 28 pp.
Zbl 1435.35035,
MR 4029530,
DOI 10.1007/s00030-019-0592-4

\bibitem{ADH} {\it T.~Arbogast}, {\it J.~Douglas}, {\it U.~Hornung}: Derivation of the double porosity model of single phase flow via homogenization theory. SIAM J. Math. Anal. {\it 21} (1990), 823--836.
Zbl 0698.76106,
MR 1052874,
DOI 10.1137/0521046

\bibitem{BoBr} {\it A.~Boughammoura}, {\it Y.~Braham}: Homogenization of a three-phase composites of double-porosity type. Czechoslovak Math. J. {\it 71} (2021), 45--73.
Zbl 1513.35046,
MR 4226471,
DOI 10.21136/CMJ.2020.0151-19

\bibitem{BLM} {\it A.~Bourgeat}, {\it S.~Luckhaus}, {\it A.~Mikelic}: Convergence of the homogenization process for a double-porosity model of immiscible two-phase flow. SIAM, J. Math. Anal. {\it 27} (1996), 1520--1543.
Zbl 0866.35018,
MR 1416507,
DOI 10.1137/S0036141094276457

\bibitem{CD} {\it J.~Casado-D\'{\i}az}: Two-scale convergence for
nonlinear Dirichlet problems in perforated domains. Proc. Royal Soc.
Edinburgh Sect. A {\it 130} (2000), 249--276.
Zbl 0977.35019,
MR 1750830,
DOI 10.1017/S0308210500000147

\bibitem{CiDaDoGr} {\it D.~Cioranescu}, {\it A.~Damlamian}, {\it P.~Donato}, {\it G.~Griso}, {\it R.~Zaki}: The periodic unfolding method in domains with holes. SIAM J. Math. Anal. {\it 44} (2012), 718--760.
Zbl 1250.49017,
MR 2914248,
DOI 10.1137/100817942

\bibitem{CiDaGr2} {\it D.~Cioranescu}, {\it~A Damlamian}, {\it G.~Griso}: Periodic unfolding and homogenization. C. R. Math. Acad. Sci. Paris {\it 335} (2002), 99--104.
Zbl 1001.49016,
MR 1921004,
DOI 10.1016/S1631-073X(02)02429-9

\bibitem{CiDaGr1} {\it D.~Cioranescu}, {\it A.~Damlamian}, {\it G.~Griso}: The
periodic unfolding method in homogenization. SIAM J. Math. Anal. {\it 40} (2008), 1585--1620.
Zbl 1167.49013,
MR 2466168,
DOI 10.1137/080713148

\bibitem{CiDaGr3} {\it D.~Cioranescu}, {\it A.~Damlamian}, {\it G.~Griso}: The
periodic unfolding method. Theory and applications to partial differential problems. Series in Contemporary Mathematics 3, Springer, Singapore, 2018.
Zbl 1428.35002,
MR 3839345,
DOI 10.1007/978-981-13-3032-2

\bibitem{CiDaGr4} {\it D.~Cioranescu}, {\it A.~Damlamian}, {\it G.~Griso}: The periodic unfolding method for countably many scales. Adv. Math. Sci. Appl. {\it 32} (2023), 17--58.
Zbl 1533.49009,
MR 4683680

\bibitem{CiDoZa} {\it D.~Cioranescu}, {\it P.~Donato}, {\it R.~Zaki}: The periodic unfolding method in perforated domains. Port. Math. {\it 63} (2006), 467--496.
Zbl 1119.49014,
MR 2287278

\bibitem{Flo} {\it L.~Flod\'{e}n}: G-Convergence and homogenization of some sequences of monotone differential operators. Doctoral Thesis 70, Department of Engineering and Sustainable Development, Mid Sweden University, 2009.

\bibitem{FHOPvery-weak} {\it L.~Flod\'{e}n}, {\it A.~Holmbom}, {\it M.~Olsson}, {\it J.~Persson}: Very weak multiscale convergence, Appl. Math. Lett. {\it 23} (2010), 1170--1173.
Zbl 1198.35023,
MR 2665589,
DOI 10.1016/j.aml.2010.05.005

\bibitem{FHOLP2} {\it L.~Flod\'{e}n}, {\it A.~Holmbom}, {\it M.~Olsson~Lindberg}, {\it J.~Persson}: Two-scale convergence. Some remarks and extensions. Pure Appl. Math. Quart. {\it 9} (2013), 461--486.
Zbl 1288.35041,
MR 3138471,
DOI 10.4310/PAMQ.2013.v9.n3.a4

\bibitem{FHOLP33} {\it L.~Flod\'{e}n}, {\it A.~Holmbom}, {\it M.~Olsson~Lindberg}, {\it J.~Persson}: Homogenization of parabolic equations with multiple spatial and temporal scales. In Report 2011:6, preprint series of the Mid Sweden University, 2011.

\bibitem{FHOLParbitrary} {\it L.~Flod\'{e}n}, {\it A.~Holmbom}, {\it M.~Olsson~Lindberg}, {\it J.~Persson}: Homogenization of parabolic equations with an arbitrary number of scales in both space and time. J. Appl. Math. {\it 2014} (2014), 16 pp.
Zbl 1406.35140,
MR 3176810,
DOI 10.1155/2014/101685

\bibitem{Francu2010} {\it J.~Franc\r{u}}: Modification of unfolding approach to two-scaleconvergence. Math. Bohem. {\it 135} (2010), 403--412.
Zbl 1224.35020,
MR 2681014,
DOI 10.21136/MB.2010.140831

\bibitem{Francu2011} {\it J.~Franc\r{u}}: On two-scale convergence and periodic unfolding. Tatra Mt. Math. Publ. {\it 48} (2011), 73--81.
Zbl 1265.35016,
MR 2841107,
DOI 10.2478/v10127-011-0007-3

\bibitem{FrSv2012} {\it J.~Franc\r{u}}, {\it N.~Svanstedt}: Some remarks on two-scale convergence and periodic unfolding. Appl. Math.  {\it 57} (2012), 359--375.
Zbl 1265.35017,
MR 2984608,
DOI 10.1007/s10492-012-0021-z

\bibitem{HOL1} {\it A.~Holmbom}: Some modes of convergence and their application to homogenization and optimal composites design. Doctoral thesis 1996:208D, Department of Mathematics, Lule\aa\ University of Technology, 1996.

\bibitem{HOL} {\it A.~Holmbom}: Homogenization of parabolic equations: an alternative approach and some corrector-type results. Appl. Math. {\it 42} (1997), 321--343.
Zbl 0898.35008,
MR 1467553,
DOI 10.1023/A:1023049608047

\bibitem{HOS} {\it A.~Holmbom}, {\it M.~Olsson}, {\it J.~Silfver}: On the determination of limits for some sequences of functions and differential operators. Can. Appl. Math. Q {\it 14} (2006), 299--329.
Zbl 1135.74039,
MR 2327747

\bibitem{HSSW} {\it A.~Holmbom}, {\it J.~Silfver}, {\it N.~Svanstedt}, {\it N.~Wellander}: On two-scale convergence and related sequential compactness topics. Appl. Math. {\it 51} (2006), 247--262.
Zbl 1164.40304,
MR 2228665,
DOI 10.1007/s10492-006-0014-x

\bibitem{Len} {\it M.~Lenczner}: Homog\'{e}n\'{e}isation d'un circuit 
\'{e}lectrique. C. R. Acad. Sci. Paris S\'{e}rie IIB, {\it 324} (1997), 537--542.
Zbl 0887.35016,
DOI 10.1016/S1251-8069(97)83186-7

\bibitem{Lob} {\it T.~Lobkova}: Homogenization of linear parabolic equations with a certain resonant matching between rapid spatial and temporal oscillations in periodically perforated domains. Acta Math. Appl. Sin. Engl. Ser. {\it 35} (2019), 340--358.
Zbl 1416.35032,
MR 3950176,
DOI 10.1007/s10255-019-0810-1

\bibitem{LNW} {\it D.~Lukkassen}, {\it G.~Nguetseng}, {\it P.~Wall}: Two-scale convergence. Int. J. Pure Appl. Math. {\it 2} (2002), 35--86.
Zbl 1061.35015,
MR 1912819

\bibitem{Nes} {\it S.~Nesenenko}: Homogenization for dislocation based gradient visco-plasticity. J. Math. Anal. Appl. {\it 425} (2015), 133--159.
Zbl 1311.74102,
MR 3299654,
DOI 10.1016/j.jmaa.2014.10.056

\bibitem{Ng1} {\it G.~Nguetseng}: A general convergence result for a functional related to the theory of homogenization. SIAM J. Math. Anal. {\it 20} (1989), 608--623.
Zbl 0688.35007,
MR 0990867,
DOI 10.1137/0520043

\bibitem{Ng2} {\it G.~Nguetseng}: Homogenization structures and applications I. Z. Anal. Anwendungen {\it 22} (2003), 73--108.
Zbl 1045.46031,
MR 1962077,
DOI 10.4171/ZAA/1133

\bibitem{NgSv} {\it G.~Nguetseng}, {\it N.~Svanstedt}: $\Sigma $-convergence. Banach J. Math. Anal. {\it 5} (2011), 101--135.
Zbl 1229.46035,
MR 2738525,
DOI 10.15352/bjma/1313362985

\bibitem{NgWo1} {\it G.~Nguetseng}, {\it J.L.~Woukeng}: $\Sigma$-convergence of nonlinear parabolic operators. Nonlinear Anal. {\it 66} (2007), 968-1004.
Zbl 1116.35011,
MR 2288445,
DOI 10.1016/j.na.2005.12.035

\bibitem{PerPhD} {\it J.~Persson}: Selected Topics in Homogenization.
Doctoral Thesis 127, Department of Engineering and Sustainable Development, Mid Sweden University, 2012.

\bibitem{Sil} {\it J.~Silfver}: G-convergence and homogenization
involving operators compatible with two-scale convergence. Doctoral Thesis 23, Department of Engineering, Physics and Mathematics, Mid Sweden University, 2007.

\bibitem{Zhi} {\it V.V.~Zhikov}: On an extension of the method of
two-scale convergence and its applications. Sb. Math. {\it 191} (2000), 973--1014.
Zbl 0969.35048,
DOI 10.1070/SM2000v191n07ABEH000491
\end{thebibliography}}
\end{document}